\let\cref\Cref
\crefname{prop}{Proposition}{Propositions}
\crefname{thm}{Theorem}{Theorems}
\crefname{lem}{Lemma}{Lemmas}
\crefname{equation}{}{}
\numberwithin{equation}{section}
\newtheorem{thm}{Theorem}[section]
\newtheorem{prop}[thm]{Proposition}
\newtheorem{lem}[thm]{Lemma}
\newtheorem{cor}[thm]{Corollary}
\newtheorem{q}[thm]{Question}
\theoremstyle{definition}
\theoremstyle{remark}
\newtheorem{rem}[thm]{Remark}
\newtheorem{exmp}[thm]{Example}
\renewcommand{\hom}{\operatorname{Hom}}
\newcommand{\ohom}{\operatorname{\ol{Hom}}}
\renewcommand{\ker}{\operatorname{Ker}}
\renewcommand{\top}{\operatorname{top}}
\newcommand{\N}{\mathbb{N}}
\newcommand{\Z}{\mathbb{Z}}
\newcommand{\Q}{\mathbb{Q}}
\newcommand{\R}{\mathbb{R}}
\newcommand{\C}{\mathbb{C}}
\newcommand{\F}{\mathbb{F}}
\newcommand{\ol}{\overline}
\DeclareMathOperator{\ext}{Ext}
\DeclareMathOperator{\GL}{GL}
\DeclareMathOperator{\im}{Im}
\DeclareMathOperator{\rank}{rank}
\DeclareMathOperator{\res}{Res}
\DeclareMathOperator{\tor}{Tor}
\DeclareMathOperator{\Bl}{Bl}
\DeclareMathOperator{\Tor}{Tor}
\DeclareMathOperator{\Ext}{Ext}
\DeclareMathOperator{\ord}{ord}
\begin{document}

\title[Homotopy ribbon concordance]
{Homotopy ribbon concordance, Blanchfield pairings,\\ and twisted Alexander polynomials}

\newcommand{\myemail}[1]{\href{mailto:#1}{#1}}

\author[S.~Friedl]{Stefan Friedl}
\address{Department of Mathematics, University of Regensburg, Germany}
\email{\myemail{sfriedl@gmail.com}}
\author[T.~Kitayama]{Takahiro Kitayama}
\address{Graduate School of Mathematical Sciences, The University of Tokyo, Japan}
\email{\myemail{kitayama@ms.u-tokyo.ac.jp}}
\author[L.~Lewark]{Lukas Lewark}
\address{Department of Mathematics, University of Regensburg, Germany}
\email{\myemail{lukas@lewark.de}}
\author[M.~Nagel]{\\Matthias Nagel}
\address{Department of Mathematics, ETH Zurich, Switzerland}
\email{\myemail{matthias.nagel@math.ethz.ch}}
\author[M.~Powell]{Mark Powell}
\address{Department of Mathematical Sciences, Durham University, United Kingdom}
\email{\myemail{mark.a.powell@durham.ac.uk}}

\hypersetup{pdfauthor={\authors},pdftitle={\shorttitle}}

\def\subjclassname{\textup{2020} Mathematics Subject Classification}
\expandafter\let\csname subjclassname@1991\endcsname=\subjclassname
\expandafter\let\csname subjclassname@2000\endcsname=\subjclassname
\subjclass{Primary~57N70, Secondary~57K10, 57K14.
}
\keywords{Ribbon concordance, Seifert form, Blanchfield pairing, twisted Alexander polynomial, Levine-Tristram signatures}

\begin{abstract}
We establish homotopy ribbon concordance obstructions coming from the Blanchfield form and Levine-Tristram signatures.
Then, as an application of twisted Alexander polynomials, we show that for every knot $K$ with nontrivial Alexander polynomial, there exists an infinite family of knots that are all concordant to $K$ and have the same Blanchfield form as~$K$, such that no pair of knots in that family is homotopy ribbon concordant.
\end{abstract}

\maketitle

\section{Introduction}

We study homotopy ribbon concordance of knots, extending the work~\cite{FP19} by the first and fifth authors on the classical Alexander polynomial to the Blanchfield form and to Levine-Tristram signatures.
Then we present an application of twisted Alexander polynomials to homotopy ribbon concordance.
As described in~\cite{FP19}, the classical Alexander polynomial is useful to show that there exists an infinite family of concordant knots that are not homotopy ribbon concordant to each other.
In this paper we show that there exists such an infinite family of knots even with isomorphic Seifert forms, and so also isomorphic Blanchfield forms,  using twisted Alexander polynomials.

Let $J$ and $K$ be oriented knots in $S^3$, and let $\pi_J$ and $\pi_K$ be the knot groups of $J$ and~$K$, respectively.
A locally flat oriented annulus $C$ properly embedded in $S^3 \times [0, 1]$, is called a \emph{homotopy ribbon concordance} from $J$ to $K$ if the following conditions are satisfied, where we write $\pi_C = \pi_1 (S^3 \times [0, 1] \setminus C)$:
\begin{enumerate}
\item $\partial C = -J \times \{ 0 \} \cup K \times \{ 1 \}$;
\item the induced homomorphism $\iota_J \colon \pi_J \to \pi_C$ is surjective;
\item the induced homomorphism $\iota_K \colon \pi_K \to \pi_C$ is injective.
\end{enumerate}
If there exists a homotopy ribbon concordance from $J$ to $K$, then we say that $J$ is \emph{homotopy ribbon concordant} to $K$, and write $J \geq_{\top} K$. 
In particular, in this case~$J$ and~$K$ are topologically concordant.
A knot $K$ is called a \emph{homotopy ribbon knot} if $K$ is homotopy ribbon concordant to the unknot.
As we discuss in more detail around \cref{question:transitivity} below, the relation $\geq_{\top}$ is conjectured to be a partial order on the set of isotopy classes of knots.

The notion of a homotopy ribbon concordance is a topological analogue of the notion of a (smooth) \emph{ribbon concordance}, defined by Gordon~\cite{Go81} as a smooth, oriented annulus $C$ properly embedded in $S^3\times[0,1]$ satisfying condition (1), such that after a small isotopy, the canonical projection of $C$ to $[0,1]$ is a Morse function without local minima.
A ribbon concordance from $J$ to $K$ satisfies conditions (1), (2) and (3), and thus is a homotopy ribbon concordance from $J$ to $K$ \cite[Lemma 3.1]{Go81}.
A knot $K$ is a \emph{ribbon knot} if and only if there is a ribbon concordance from $K$ to the unknot.
For further classical work on ribbon concordances we refer the reader to \cite{Gi84, Miy90, Miy98, Si92}.
Recently, a number of articles \cite{JMZ19, LZ19, MZ19, Sa19, Z19} have related the Heegaard Floer and the Khovanov homology theories to ribbon concordances of knots, providing evidence for Gordon's conjecture that ribbon concordance is a partial order. 
Those relationships rely on smooth techniques and do not generalize to homotopy ribbon concordances of knots.

On the other hand, we prove that the following statement, proven first by Gilmer~\cite{Gi84} for ribbon concordances using smooth methods, does in fact hold for homotopy ribbon concordances.
We denote the complement of an open tubular neighborhood of $K$ in $S^3$ by $X_K$.
Let us write $H_1(X_K;\Z[t^{\pm 1}])$ for the Alexander module of~$K$,
and $\Bl_K$ for the Blanchfield pairing of $K$, which is a sesquilinear Hermitian pairing on $H_1(X_K;\Z[t^{\pm 1}])$ with values in $\Q(t)/\Z[t^{\pm 1}]$.
For a submodule $G \subset H_1(X_K;\Z[t^{\pm 1}])$,
let $G^{\bot} \coloneqq \{a\in H_1(X_K;\Z[t^{\pm 1}])\mid  \Bl_K(a,b)=0 \text{ for all } b\in G\}$.
\begin{restatable}{thm}{thmblanchfield}\label{thm_blanchfield}
If $J \geq_{\top} K$, then
there exists a submodule $G\subset H_1(X_J; \Z[t^{\pm 1}])$
such that
$G\subset G^{\bot}$
and
the pairing on $G^{\bot} / G$ induced by $\Bl_J$
is isometric to
$\Bl_K$.
\end{restatable}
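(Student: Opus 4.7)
The plan is to work with the exterior of the concordance and extract the metabolizer $G$ from it, following a standard template for producing sublagrangians of Blanchfield pairings out of cobordisms of knot complements.

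Let $W := (S^3 \times [0,1]) \setminus \nu(C)$, write $\Lambda := \Z[t^{\pm 1}]$ for the group ring associated to the abelianization $\pi_C \twoheadrightarrow \Z$, and abbreviate $A_X := H_1(X; \Lambda)$ for $X \in \{X_J, X_K, W\}$. Define
$$G := \ker\bigl(\iota_{J*} \colon A_J \to A_W\bigr).$$
The first task is to observe that the surjectivity of $\iota_J$ on fundamental group, combined with the fact that it respects the abelianization to $\Z$, restricts to a surjection of commutator subgroups and thus abelianizes to a surjection $A_J \twoheadrightarrow A_W$ of $\Lambda$-modules. Hence $A_W \cong A_J / G$.

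The main step is to apply twisted Poincar\'e--Lefschetz duality to the $4$-manifold $W$, whose boundary decomposes as $X_J \cup_{T^2} (T^2 \times [0,1]) \cup_{T^2} X_K$. Since the tube $T^2 \times [0,1]$ contributes trivially to $\Lambda$-homology above degree zero, the duality reduces to a perfect pairing between certain torsion subquotients of $H_*(W, X_J; \Lambda)$ and $H_*(W, X_K; \Lambda)$ with values in $\Q(t)/\Lambda$. Splicing this pairing into the long exact sequences of $(W, X_J)$ and $(W, X_K)$, and invoking the identification $A_W \cong A_J/G$ from the previous step, one should extract a short exact sequence
$$0 \to A_K \to A_W \to \overline{\ohom_\Lambda(G, \Q(t)/\Lambda)} \to 0,$$
in which the injectivity on the left uses the injectivity of $\iota_K$ on fundamental groups in an essential way, and the surjection on the right is the adjoint of $\Bl_J$ restricted to $G$. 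Tracing through these identifications then yields $G \subset G^\perp$ together with an isomorphism $G^\perp/G \cong A_K$.

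Finally, under this isomorphism the pairing induced on $G^\perp/G$ from $\Bl_J$ should agree with $\Bl_K$ by naturality: the Blanchfield pairing arises from a Bockstein and cup-product construction that extends over the cobordism $W$ and restricts on the two boundary pieces to the standard definitions for $J$ and $K$. The main obstacle throughout will be the duality step: correctly setting up twisted Poincar\'e--Lefschetz duality on the non-closed $4$-manifold $W$ with its three-part boundary, and using the injectivity hypothesis on $\iota_K$ at exactly the right point to collapse the long exact sequence of the pair $(W, X_K)$ into the short exact sequence displayed above.
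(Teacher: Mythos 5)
Your skeleton matches the paper's proof: take $G=\ker\bigl(j_*\colon H_1(X_J;\Z[t^{\pm 1}])\to H_1(X_C;\Z[t^{\pm 1}])\bigr)$, use duality on the concordance exterior, and identify $G^{\bot}/G$ with the Alexander module of $K$ isometrically. But two of your key steps do not work as described. First, the injectivity of $H_1(X_K;\Z[t^{\pm 1}])\to H_1(X_C;\Z[t^{\pm 1}])$ does \emph{not} follow from the injectivity of $\iota_K\colon\pi_K\to\pi_C$: an injection of groups does not induce an injection on the abelianizations of the kernels of the maps to $\Z$ (equivalently, on $H_1$ of the infinite cyclic covers), so ``collapsing the long exact sequence of $(W,X_K)$ using condition (3)'' has no mechanism behind it. The injectivity is a homological fact: using duality, the surjectivity of $j_*$, the torsion-ness of the modules (here $\Delta_J\neq 0$), and an Euler characteristic count, one shows $H_2(X_C,X_K;\Z[t^{\pm 1}])=0$; this is the content of \cref{prop_surjection,prop_injection} and the lemmas feeding into them. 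Indeed the paper emphasizes that condition (3) is never used and that \cref{thm_blanchfield} holds without it, so any proof leaning on (3) ``at exactly the right point'' is leaning on the wrong hypothesis.

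Second, the displayed short exact sequence and the final isometry are precisely the substantive content of the theorem, and your sketch only asserts them (``one should extract'', ``should agree by naturality''). The Blanchfield form does not extend over $X_C$ --- there is no nonsingular $\Q(t)/\Z[t^{\pm 1}]$-valued pairing on $H_1(X_C;\Z[t^{\pm 1}])$ to restrict --- so naturality of the Bockstein/cup-product construction is not by itself an argument for the compatibility of pairings. What one actually proves is that $\ker\bigl(H_1(\partial X_C;\Z[t^{\pm 1}])\to H_1(X_C;\Z[t^{\pm 1}])\bigr)$ is a metabolizer of $-\Bl_J\oplus\Bl_K$ (this is where duality for $(X_C,\partial X_C)$ and the surjectivity of $j_*$ enter; the paper quotes \cite{F04} and \cite{FLNP17} for this), and then one needs a genuinely algebraic step, the unnumbered lemma following \cref{thm_blanchfield}, applied with $e=j_*$ and $m=k_*$: setting $S=j_*^{-1}(\im k_*)$, one checks that $k_*^{-1}\circ j_*$ preserves pairings using the metabolizer condition, and one proves $S=G^{\bot}$, where the inclusion $G^{\bot}\subset S$ uses the nonsingularity of $\Bl_K$ together with the metabolizer condition again. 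None of this is supplied by your outline, so as it stands the central step of the proof is a genuine gap.
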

This theorem extends the result~\cite[Theorem 1.1]{FP19} that for $J\geq_{\top} K$, the Alexander polynomial of $K$ divides that of $J$, i.e.~$\Delta_K ~|~ \Delta_J$.  This should be contrasted with the Fox-Milnor condition~\cite{FM66} that for concordant knots $K$ and $J$, up to units we have $\Delta_K \cdot f \cdot \ol{f} = \Delta_J \cdot g \cdot \ol{g}$ for some $f,g \in \Z[t^{\pm 1}]$ with $|f(1)|=1=|g(1)|$. See \cref{remark-contrasting-Bl} for further discussion contrasting \cref{thm_blanchfield} with the analogous statements for concordance.

Gilmer showed that any ribbon concordance obstruction that is determined by the isometry class
of the Blanchfield pairing (which contains the same information as the S-equivalence class of the Seifert form \cite{T73}) is subsumed by the obstruction given in \cref{thm_blanchfield}. However, that obstruction is not easily testable.
So we derive homotopy ribbon obstructions coming from the homology of the double branched cover, and from Levine-Tristram signatures, in \cref{cor:double,cor:sig} respectively.
As far as we know, those obstructions are new even in the smooth category, i.e.~as ribbon concordance obstructions.

Then we move on to an application of twisted invariants. The following theorem shows the subtlety of the homotopy ribbon concordance relation, even for smoothly concordant knots with isomorphic Seifert forms.%
\begin{restatable}{thm}{thmmain}\label{thm_main}
Let $K$ be an oriented knot in $S^3$ with nontrivial Alexander polynomial $\Delta_K$, and let $V$ be a Seifert form of $K$. Then there exists an infinite family $\{ K_i \}_{i=1}^\infty$ of oriented knots in $S^3$ satisfying the following.
\begin{enumerate}[label=\emph{(\roman*)}]
\item For every $i$, $K_i$ has a Seifert form isomorphic to $V$.
\item For every $i$, $K_i$ is ribbon concordant to $K$.
\item For every $i \neq j$, $K_i$ is not homotopy ribbon concordant to $K_j$.
\end{enumerate}
\end{restatable}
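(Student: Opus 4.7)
The plan is to construct the family $\{K_i\}$ by infection (genetic modification) of $K$. Since $\Delta_K$ is nontrivial, the Alexander module $H_1(X_K; \Z[t^{\pm 1}])$ is nonzero, so I can find a simple closed curve $\eta \subset X_K$ that is null-homologous in $X_K$ but represents a nontrivial element of the Alexander module. I would then select an infinite family of ribbon knots $\{J_i\}_{i=1}^\infty$ with trivial classical Alexander polynomial, chosen so that appropriate twisted Alexander polynomials $\Delta^{\rho_i}_{J_i}$, for some carefully chosen sequence of representations $\rho_i$, are pairwise non-divisible in $\Z[t^{\pm 1}]$. I would set $K_i \coloneqq K_\eta(J_i)$, the infection of $K$ along $\eta$ by $J_i$.

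Items (i) and (ii) follow from standard properties of infection. For (i), since $\eta$ is null-homologous in $X_K$ and $\Delta_{J_i}=1$, a Mayer-Vietoris argument (or direct inspection using a Seifert surface for $K$ disjoint from $\eta$) shows that $K_i$ admits a Seifert form isomorphic to $V$. For (ii), each ribbon disk $D_i \subset B^4$ for $J_i$ gives rise to a ribbon concordance from $K_i$ to $K$: remove a tubular neighborhood of $\eta \times [0,1]$ from $X_K \times [0,1]$ and glue in the exterior of $D_i$, with the null-homology of $\eta$ ensuring compatibility of framings. Note that this does \emph{not} immediately yield a ribbon concordance from $K_i$ to $K_j$, since one would need to reverse a ribbon concordance from $K_j$ to $K$, which a priori does not give a ribbon concordance. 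So (iii) is a genuine obstacle.

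For (iii), my plan is to apply a twisted version of the divisibility obstruction from \cite{FP19}, which I expect to be established earlier in this paper, presumably in the same vein as \cref{thm_blanchfield} but for twisted Alexander polynomials. Assuming for contradiction that $K_i \geq_{\top} K_j$ via a concordance $C$, I would choose a representation $\rho \colon \pi_{K_j} \to \GL_n(\F)$ that extends through the injection $\iota_{K_j}$ to a representation of $\pi_C$, and that restricts nontrivially to the infection subgroup $\pi_{J_j} \subset \pi_{K_j}$. The twisted divisibility then forces $\Delta^{\rho}_{K_j}$ to divide $\Delta^{\rho \circ \iota_{K_j}^{-1} \circ \iota_{K_i}}_{K_i}$ up to units. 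A Mayer-Vietoris computation of twisted Alexander polynomials under infection would arrange for $\Delta^{\rho_j}_{J_j}$ to appear as a factor of $\Delta^{\rho}_{K_j}$, and analogously on the $K_i$ side, contradicting the pairwise non-divisibility of the $\Delta^{\rho_i}_{J_i}$.

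The critical technical difficulties in step (iii) are twofold: (a) establishing the correct twisted Alexander polynomial divisibility statement under homotopy ribbon concordance, which requires handling the noncommutative setting more carefully than in \cite{FP19}; and (b) arranging representations of $\pi_{K_j}$ that simultaneously extend over the hypothetical $\pi_C$, restrict nontrivially to each $\pi_{J_j}$, and yield $\Delta^{\rho_j}_{J_j}$ as a controllable factor. This is precisely where the condition $\Delta_K \ne 1$ is used: it guarantees a curve $\eta$ whose image in the Alexander module is nontrivial, which is what lets suitable metabelian representations of $\pi_{K_j}$ see the infection knots $J_j$ at all.
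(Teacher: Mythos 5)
Your overall strategy is the same as the paper's: infect $K$ along an unknotted, null-homologous curve that is nontrivial in (a quotient of) the Alexander module, use ribbon companions to get (i) and (ii), and rule out homotopy ribbon concordances among the $K_i$ via a twisted Alexander divisibility obstruction (\cref{thm_division}) applied to metabelian representations. However, your specific implementation has a genuine gap at the heart of (iii). Since $\eta$ is null-homologous, \cref{lem_infection} gives $\pi_{K_i}/\pi_{K_i}^{(2)}\cong\pi_K/\pi_K^{(2)}$, so \emph{every} metabelian representation of $\pi_{K_i}$ factors through the degree-one map $\rho_S\colon\pi_{K_i}\to\pi_K$, and its restriction to the companion subgroup $\pi_{J_i}$ (which lands in $\pi_{K_i}^{(1)}/\pi_{K_i}^{(2)}$, an abelian group) is abelian. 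By the satellite formula (\cref{prop_satellite}), the companion then contributes only $\prod_i\Delta_{J_i}(z_i)$, which is $1$ because you chose $\Delta_{J_i}=1$. So all metabelian twisted Alexander polynomials of your $K_i$ coincide with those of $K$: the invariants you propose cannot distinguish any two members of your family. To make the nonabelian polynomials $\Delta^{\rho_i}_{J_i}$ visible you would need representations of $\pi_{K_i}$ that do not factor through the metabelian quotient, and then your difficulty (b) is not merely technical but unresolved: the only mechanism for extending a representation over an arbitrary hypothetical concordance exterior $\pi_C$ is that the inclusions induce isomorphisms $\pi_{K_q}/\pi_{K_q}^{(2)}\cong\pi_C/\pi_C^{(2)}\cong\pi_{K_{q'}}/\pi_{K_{q'}}^{(2)}$, which is itself a nontrivial consequence of the Blanchfield obstruction (\cref{thm_blanchfield} together with \cref{cor:Bl-isometric}, using that the $K_i$ have isometric Seifert forms, plus $\Z$-torsion-freeness of the Alexander module), and it only helps for representations factoring through the metabelian quotient.

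For comparison, the paper resolves exactly these points by taking companions with \emph{nontrivial} Alexander polynomial, namely $T(p,q)\,\sharp\,{-T(p,q)}$ for varying primes $q$: the Seifert form is still preserved because the axis is chosen disjoint from the Seifert surface (no triviality of $\Delta_{J}$ is needed for (i)), and the companion's contribution to $\Delta^{r,p}_{K_q}$ is a resultant of cyclotomic polynomials, a nontrivial power $q^{m_q}$, so distinct primes $q\neq q'$ are separated. Two further ingredients that your sketch does not address are essential: first, one needs the image of the axis to be nontrivial in a specific finite quotient $H_1(\Sigma_{K,r};\F_p)$ that the representation actually uses (nontriviality in the Alexander module alone does not suffice), and this is where $\Delta_K\neq 1$ enters via \cref{thm_Liv02}; second, the divisibility $\Delta^{r,p}_{K_{q'}}\mid\Delta^{r,p}_{K_q}$ yields a contradiction only because $\Delta^{r,p}_K\neq 0$, which is \cref{thm_non-vanishing} and relies crucially on the representation restricting to a $p$-group representation on the commutator subgroup. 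Without a replacement for both the extension step and the nonvanishing step, your plan as stated cannot be completed.
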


Of course, the second item implies that $K_i$ is concordant to $K_j$ for every $i,j$. The third item implies~\cite[Lemma 3.1]{Go81} that $K_i$ is not ribbon concordant to $K_j$.
We believe that the weaker version of \cref{thm_main}, stated entirely in the smooth category by deleting `homotopy' from (iii), is also new.  Moreover, it seems to be hard to prove this statement using Heegaard Floer or Khovanov homology. The theorems from the articles \cite{JMZ19, LZ19, MZ19, Sa19, Z19} cited above show that a ribbon concordance from $J$ to $K$ induces a bigraded injection $\mathcal{H}(K) \hookrightarrow \mathcal{H}(J)$, for $\mathcal{H}$ a suitable knot homology theory.  This is extremely powerful for obstructing ribbon concordances for particular pairs of knots. However, general satellite formulae for these invariants are either very complicated, or in the case of Khovanov homology still under development.
Thus, with currently known computational methods, it seems hopeless to prove a result as general as \cref{thm_main}
by constructing the family $K_i$ as satellites, and then applying these injectivity results.

We prove \cref{thm_main} by applying twisted Alexander polynomials~\cite{Lin01, W94}.
In fact, we establish an obstruction to homotopy ribbon concordance in terms of these invariants.
Suppose that $J \geq_{\top} K$ with a homotopy ribbon concordance~$C$.
Let $\alpha \colon \pi_C \to \GL(n, R)$ be an $n$-dimensional representation over a Noetherian UFD~$R$.
We write $\Delta_J^{\alpha \circ \iota_J}(t)$ and $\Delta_K^{\alpha \circ \iota_K}(t)$ for the twisted Alexander polynomials of $J$ and $K$ associated to the induced representations $\alpha \circ \iota_J$ and $\alpha \circ \iota_K$ respectively.
The invariants are elements in $R[t^{\pm 1}]$, and well-defined up to multiplication by a unit in~$R[t^{\pm 1}]$.
We write $f ~|~ g$ for $f, g \in R[t^{\pm 1}]$ if $g = f h$ for some $h \in R[t^{\pm 1}]$. In particular,
$0 ~|~ g \Leftrightarrow 0 = g$.

The following theorem generalizes the result~\cite[Theorem 1.1]{FP19} on the classical Alexander polynomial to the case of twisted Alexander polynomials.
\begin{restatable}{thm}{thmdivision} \label{thm_division}
For a homotopy ribbon concordance $C$ from $J$ to $K$ and a representation $\alpha \colon \pi_C \to \GL(n, R)$, the following holds:
\[ \Delta_K^{\alpha \circ \iota_K} ~|~ \Delta_J^{\alpha \circ \iota_J}. \]
\end{restatable}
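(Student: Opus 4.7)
The plan is to adapt the proof of \cite[Theorem 1.1]{FP19} to the twisted setting. Set $\Lambda \coloneqq R[t^{\pm 1}]$, and let $\phi \colon \pi_C \to \Z$ be the map sending a meridian of $C$ to $1$. Combining $\phi$ with $\alpha$ gives a representation $\beta \colon \pi_C \to \GL(n,\Lambda)$, $\gamma \mapsto \alpha(\gamma)\, t^{\phi(\gamma)}$, and this, together with its restrictions along $\iota_J$ and $\iota_K$, determines twisted chain complexes on $X_J$, $X_K$, and $E_C \coloneqq (S^3 \times [0,1]) \setminus \nu(C)$. By definition, $\Delta_J^{\alpha \circ \iota_J}$ (respectively $\Delta_K^{\alpha \circ \iota_K}$) is the order of the $\Lambda$-module $H_1(X_J;\Lambda^n)$ (respectively $H_1(X_K;\Lambda^n)$); if $H_1(X_J;\Lambda^n)$ is not $\Lambda$-torsion then $\Delta_J^{\alpha\circ\iota_J} = 0$ and the divisibility holds trivially, so I may assume both modules are torsion.

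The proof would proceed in two steps, parallel to the two conditions on $\iota_J$ and $\iota_K$. First, the surjectivity of $\iota_J$ yields, via standard handle cancellation, a handle decomposition of $E_C$ relative to a collar of $X_J$ using only $2$- and $3$-handles; by an Euler characteristic count the numbers of $2$- and $3$-handles agree. This is the content of \cite[Lemma 2.1]{FP19}, whose proof is purely about the topology of $E_C$ and carries over verbatim. Consequently the relative twisted chain complex $C_*(E_C,X_J;\Lambda^n)$ is concentrated in degrees $2$ and $3$, giving $H_i(E_C,X_J;\Lambda^n)=0$ for $i \leq 1$. From the long exact sequence of the pair, inclusion induces a surjection $H_1(X_J;\Lambda^n) \twoheadrightarrow H_1(E_C;\Lambda^n)$, so $\ord\bigl(H_1(E_C;\Lambda^n)\bigr)$ divides $\Delta_J^{\alpha\circ\iota_J}$.

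Second, the injectivity of $\iota_K$ should give an injection $H_1(X_K;\Lambda^n) \hookrightarrow H_1(E_C;\Lambda^n)$. Combined with Step~1, this yields $\Delta_K^{\alpha\circ\iota_K} \mid \ord(H_1(E_C;\Lambda^n)) \mid \Delta_J^{\alpha\circ\iota_J}$. The approach, mirroring \cite{FP19}, is to apply Poincar\'e--Lefschetz duality on the $4$-manifold $E_C$: the dual handle decomposition with respect to $X_K \cup T^2 \times I$ (the complement of a collar of $X_J$ in $\partial E_C$) has only $1$- and $2$-handles, which combined with the long exact sequences of the pairs $(E_C, X_K \cup T^2 \times I)$ and $(E_C, X_K)$, plus the injectivity of $\iota_K$, should force the desired injectivity on $H_1$.

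The main obstacle is the second step. Over a general Noetherian UFD $\Lambda = R[t^{\pm 1}]$, rather than the PID $\Z[t^{\pm 1}]$ of the classical setting, one has to control the $0$th Fitting ideal (defining the order) under submodule and quotient inclusions; this is well-behaved in short exact sequences, but requires verification. One also needs the local-coefficient version of Poincar\'e--Lefschetz duality, paying attention to the bar-conjugation of $\beta$ that appears on the dual side, and one must check that the long exact sequence output is compatible with the $\Lambda$-module structure. Once these technical points are set up, the geometric input---surjectivity of $\iota_J$ and injectivity of $\iota_K$---drives the proof as in the classical case.
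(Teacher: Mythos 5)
Your overall architecture --- a surjection on twisted first homology from the $X_J$ side, an injection from the $X_K$ side, and multiplicativity of orders in short exact sequences --- is exactly the paper's, but there is a genuine gap at the central step, namely the injection $H_1^{\alpha \circ \iota_K \otimes \phi_K}(X_K; R[t^{\pm 1}]^n) \hookrightarrow H_1^{\alpha \otimes \phi_C}(X_C; R[t^{\pm 1}]^n)$. You leave this as something duality ``should force'' and, more importantly, you name the wrong input: the injectivity of $\iota_K \colon \pi_K \to \pi_C$ plays no role. The paper points out explicitly (see the remark after \cref{prop_injection}, and the introduction) that \cref{thm_division} holds without condition (3) of the definition; a concordance satisfying (1) and (2) but not (3) yields the same conclusion, so no argument whose engine is ``injectivity of $\iota_K$ forces injectivity on $H_1$'' can be the right mechanism. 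What actually drives the injection is the hypothesis $\Delta_J^{\alpha\circ\iota_J} \neq 0$: the surjection from Step 1 makes $H_1^{\alpha\otimes\phi_C}(X_C; R[t^{\pm 1}]^n)$, and hence $H_1^{\alpha\otimes\phi_C}(X_C, X_K; R[t^{\pm 1}]^n)$, torsion; Poincar\'e--Lefschetz duality combined with the universal coefficient spectral sequence for the dual representation $\alpha^\dagger\otimes(-\phi_C)$ on the pair $(X_C, X_J)$ (\cref{lem_ss}), together with an Euler characteristic count ($\chi(X_C, X_J) = 0$), shows that $H_3^{\alpha^\dagger\otimes(-\phi_C)}(X_C, X_J; R[t^{\pm 1}]^n)$ vanishes and $H_2^{\alpha^\dagger\otimes(-\phi_C)}(X_C, X_J; R[t^{\pm 1}]^n)$ is torsion (\cref{lem_h123}); feeding this back through the second spectral sequence kills every $E_2$-term on the line $p+q=2$, so $H_2^{\alpha\otimes\phi_C}(X_C, X_K; R[t^{\pm 1}]^n) = 0$, and the long exact sequence of $(X_C, X_K)$ gives the injection. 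None of this torsion/duality/Euler-characteristic argument appears in your Step 2, and it is precisely the heart of the proof, not a ``technical point to be set up.''

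Two further issues. First, ``so I may assume both modules are torsion'' is backwards: torsionness of $H_1^{\alpha\circ\iota_K\otimes\phi_K}(X_K; R[t^{\pm 1}]^n)$ is a consequence of the injection into a torsion module, not an admissible assumption. If it failed, then $\Delta_K^{\alpha\circ\iota_K} = 0$, and with the convention $0 \mid g \Leftrightarrow g = 0$ the theorem would force $\Delta_J^{\alpha\circ\iota_J} = 0$; this is exactly the case you must rule out, not assume away. Second, your Step 1 via a relative handle decomposition with only $2$- and $3$-handles needs justification in the topological category, since $X_C$ is only a topological $4$-manifold ($C$ is locally flat) and handle decompositions are not automatic there; the paper instead runs the purely algebraic argument parallel to the second proof of \cite[Proposition 3.1]{FP19}, using the universal coefficient spectral sequence over $\Z\pi_C$ and the vanishing $H_0(X_C, X_J; \Z\pi_C) = H_1(X_C, X_J; \Z\pi_C) = 0$ coming from $\pi_1$-surjectivity (\cref{lem_vanishing,lem_h1}), which works uniformly for any Noetherian UFD $R$. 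The points you do flag (orders in short exact sequences, duality with the bar-involution) are the routine parts; the missing piece is the argument isolated above.
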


Again, we can contrast this with the Kirk-Livingston condition~\cite{KL99} on twisted Alexander polynomials of concordant knots, which is a direct analogue of the Fox-Milnor condition mentioned above. 

Let us briefly sketch the proof of \cref{thm_main} assuming \cref{thm_division}.
The infinite family $\{K_i\}_{i=1}^{\infty}$ of knots in the statement of the theorem is produced by a satellite construction.
Let $K$ be an oriented knot with nontrivial Alexander polynomial and let $V$ be a Seifert form associated to a Seifert surface~$F$ for~$K$.
We pick a simple closed curve $A$ in $S^3$, unknotted in $S^3$ and disjoint from~$F$, and think of~$K$ as a knot in the solid torus exterior of~$A$.
Then, fixing a prime $p$, we consider the satellite knots~$K_q$ of $T(p, q) \sharp - T(p, q)$ with pattern~$K$ for primes $q \neq p$, where $T(p, q)$ is the $(p, q)$-torus knot.
By the construction we can check that the family of knots $\{ K_q \}_{q \neq p}$, with $q$ prime, satisfies conditions (i) and (ii) of \cref{thm_main}.
Also, for appropriate $A$ and $p$ we can see condition (iii) as follows.
Suppose that $K_q \geq_{\top} K_{q'}$ for $q, q' \neq p$ with a homotopy ribbon concordance $C$. The nontriviality of the Alexander polynomial of $K$ enables us to find a metabelian representation $\alpha \colon \pi_C \to \GL(n, \Z)$ factoring through a $p$-group such that if $\Delta_{K_q}^{\alpha \circ \iota_{K_q}}(t) ~|~ \Delta_{K_{q'}}^{\alpha \circ \iota_{K_{q'}}}(t)$, then $q = q'$.  The condition that the representation $\alpha$ factors through a $p$-group is important in showing that both of these twisted polynomials are nonzero.
Thus (iii) follows from \cref{thm_division}.

Let us conclude the introduction with some open questions
highlighting the differences between  ribbon concordance and homotopy ribbon concordance.
The set of ribbon concordances is closed under composition. We do not know whether this holds for homotopy ribbon concordances, too.
\begin{q}\label{question:transitivity}
Is the homotopy ribbon concordance relation transitive, i.e.~does $I \geq_{\top} J$ and $J \geq_{\top} K$ for oriented knots $I$, $J$, $K$ imply $I \geq_{\top} K$?
\end{q}
The transitivity of the surjectivity condition (2) is easily verified, but the same cannot be said for the injectivity condition (3).
An affirmative answer would be a very interesting first step towards showing that $\geq_{\top}$ is a partial order.  One also needs antisymmetry, that $K \geq_{\top} J$ and $J \geq_{\top} K$ implies $J$ and $K$ are isotopic. 

Let us emphasize that \cref{thm_blanchfield,thm_division} actually hold without condition~(3). So, one might be tempted to simply strike condition (3) from the definition of homotopy ribbon concordances; \cref{thm_blanchfield,thm_division} would still hold. However, there is some indication that this would be a less natural definition than the one we have given.
Let $K_1$, $K_2$ be two knots with trivial Alexander polynomial. For such knots, there exist discs
embedded properly and locally flat in $D^4$ with
$\partial D_i = K_i$ and $\pi_1(D^4\setminus D_i) \cong \Z$ \cite[Theorem~11.7B]{FQ90}.
Tubing together $D_1$ and $D_2$ yields a topological concordance from $K_1$ to $K_2$
satisfying conditions (1) and (2), but (if $K_2$ is nontrivial) not (3).  So then without (3), $\geq_{\top}$ has no hope of being a partial order.

This observation also implies that, to answer the following question positively, one must indeed make use of condition~(3).
\begin{q}
Does $J \geq_{\top} K$ imply $g(J) \geq g(K)$?
\end{q}
The analogous statement for ribbon concordance was established by Zemke using knot Floer homology \cite{Z19}.

One may easily show using embedded Morse theory (see e.g.~\cite{BP16}) that for smoothly concordant knots $K, J$, there exists a third knot $L$ that is ribbon concordant to both $K$ and $J$. Whether the analogous statement holds for homotopy ribbon concordance is less clear.
\begin{q} \label{Q:join}
Given two topologically concordant knots $K, J$, does there exist a third knot $L$ such that
$L \geq_{\top} K$ and $L \geq_{\top} J$?
\end{q}
Note that the homotopy slice-ribbon conjecture (which states that all topologically slice knots are homotopy ribbon knots) would imply an affirmative answer to \cref{Q:join}, since one could pick $L$ as $K \# -J \# J$.

This paper is organized as follows.
\cref{sec_TAP} provides a brief review of twisted Alexander polynomials of knots.
\cref{sec_Blanchfield} contains the proof of \cref{thm_blanchfield,cor:double,cor:sig}.
In \cref{sec_TAM} we generalize the results in~\cite{FP19} to the case of twisted Alexander modules, and prove \cref{thm_division}.
In \cref{sec_satellite} we describe a satellite construction producing such an infinite family of knots as in \cref{thm_main}, and a satellite formula of twisted Alexander polynomials.
In \cref{sec_MAP} we introduce certain nonzero twisted Alexander polynomials associated to metabelian representations, which are useful for applications of \cref{thm_division}.
\cref{sec_proof} fleshes out the details of the proof of \cref{thm_main} that was sketched above.
\cref{sec:app} discusses the base ring change properties of Blanchfield pairings that are needed in \cref{sec_Blanchfield}.

\subsection*{Acknowledgments}
The authors thank Maciej Borodzik for inspiring ideas regarding applications of the Blanchfield obstruction,
and the anonymous referee for fruitful suggestions.
Some of the work on this article was undertaken at the MPIM Bonn during the workshop on 4-manifolds in September 2019,
and some motivation came from the workshop on low-dimensional topology in Regensburg one month later.
TK was supported by JSPS KAKENHI Grant Numbers JP18K13404, JP18KK0380.
LL was supported by the Emmy Noether Programme of the DFG.
MN gratefully acknowledges support by the SNSF Grant 181199.

\section{Twisted Alexander polynomials} \label{sec_TAP}

We begin with the definition of twisted Alexander polynomials of knots~\cite{Lin01, W94}.
We also refer the reader to the survey papers~\cite{DFL15, FV11} for more details on the invariants.
In this section, let $R$ be a Noetherian UFD with (possibly trivial) involution $\ol{\,\cdot\,} \colon R \to R$, and $Q(R)$ is its quotient field.

\subsection{Twisted homology and cohomology groups}

Let $X$ be a path connected space having universal cover $\widetilde{X}$ and let $Y$ be a subspace of $X$.
Let $\widetilde{Y}$ be the pullback of $Y$ by the covering map $\widetilde{X} \to X$.
Note that $\pi_1(X)$ acts on $\widetilde{X}$ on the left via deck transformations.
The singular chain complex $C_*(\widetilde{X}, \widetilde{Y})$ of $(\widetilde{X}, \widetilde{Y})$ is a left $\Z \pi_1(X)$-module.
We write $\ol{C}_*(\widetilde{X}, \widetilde{Y})$ when we think of $C_*(\widetilde{X}, \widetilde{Y})$ as a right $\Z \pi_1(X)$-module, using the involution of $\Z \pi_1(X)$ reversing elements of $\pi_1(X)$.

Let $\alpha \colon \pi_1 (X) \to \GL_n(R)$ be a representation.
For each nonnegative integer $i$ we define the \emph{$i$-th twisted homology group} $H_i^\alpha(X, Y; R^n)$ and the \emph{$i$-th twisted cohomology group} $H_\alpha^i(X, Y; R^n)$ \emph{of $(X, Y)$ associated to $\alpha$} as:
\begin{align*}
H_i^\alpha(X, Y; R^n) &= H_i(\ol{C}_*(\widetilde{X}, \widetilde{Y}) \otimes_{\Z \pi_1(X)} R^n), \\
H_\alpha^i(X, Y; R^n) &= H^i(\hom_{\Z \pi_1(X)}(C_*(\widetilde{X}, \widetilde{Y}), R^n)).
\end{align*}
When $Y$ is empty, we write $H_i^\alpha(X; R^n)$ and $H_\alpha^i(X; R^n)$ respectively.

If $(X, Y)$ is a CW-pair, then the cellular twisted homology and cohomology groups are similarly defined for the cellular chain complex of $(\widetilde{X}, \widetilde{Y})$, and isomorphic to the singular twisted homology and cohomology groups respectively.

The $0$-th twisted homology and cohomology groups are computed as follows (see for instance \cite[Proposition 3.1]{HS71}):
\begin{align*}
H_0^\alpha(X; R^n) &= R^n / \{ \alpha(\gamma) v - v \mid \gamma \in \pi_1(X), v \in R^n \}, \\
H_\alpha^0(X; R^n) &= \{ v \in R^n \mid \alpha(\gamma) v = v ~\text{for all $\gamma \in \pi_1(X)$} \}.
\end{align*}

\subsection{Twisted Alexander polynomials of knots}\label{subsec:twistedalexofknots}

Let $K$ be an oriented knot in~$S^3$.
Recall that we denote the complement of an open tubular neighborhood of $K$ in $S^3$ by $X_K$, and set $\pi_K = \pi_1(X_K)$.
Let $\phi_K \colon \pi_K \to \Z$ be the abelianization map sending a meridional element to $1$.

Let $\alpha \colon \pi_K \to \GL(n, R)$ be a representation.
We write $\alpha \otimes \phi_K \colon \pi_K \to \GL(n, R[t^{\pm 1}])$ for the tensor representation given by
\[ \alpha \otimes \phi_K(\gamma) = \alpha(\gamma) t^{\phi_K(\gamma)} \]
for $\gamma \in \pi_K$.
We call $H_1^{\alpha \otimes \phi_K}(X_K; R[t^{\pm 1}]^n)$ the \emph{twisted Alexander module of $K$ associated to $\alpha$}, which is a finitely generated $R[t^{\pm 1}]$-module.
We define the \emph{twisted Alexander polynomial} $\Delta_K^\alpha(t) \in R[t^{\pm 1}]$ of $K$ associated to $\alpha$ to be its order.
Namely, for an exact sequence
\[ R[t^{\pm 1}]^l \xrightarrow{r} R[t^{\pm 1}]^m \to H_1^{\alpha \otimes \phi_K}(X_K; R[t^{\pm 1}]^n) \to 0 \]
with $l \geq m$, $\Delta_K^\alpha(t)$ is the greatest common divisor of the $m$-minors of a representation matrix of $r$, and is well-defined up to multiplication by a unit in $R[t^{\pm 1}]$.
In the following we write $f \doteq g$ for $f, g \in R[t^{\pm 1}]$ if $f = u g$ for some unit $u \in R[t^{\pm 1}]$.

\begin{rem} \label{rem_TAP}~
\begin{enumerate}
\item The twisted Alexander polynomial of $K$ associated to the trivial representation $\pi_K \to \GL(1, \Z)$ coincides with the classical Alexander polynomial $\Delta_K(t)$ of $K$.
\item The twisted Alexander polynomial $\Delta_K^\alpha(t)$ is invariant under conjugation of representations~$\alpha$.
\end{enumerate}
\end{rem}

\section{The Blanchfield pairing} \label{sec_Blanchfield}

\subsection{Proof of the Blanchfield homotopy ribbon obstruction}
Recall that for a knot $K$, $X_K$ denotes the complement of an open tubular neighborhood of $K$ in $S^3$,
and the Alexander module of $K$ is $H_1(X_K; \Z[t^{\pm 1}])$, where $t$ acts as a generator of the deck transformation group of the infinite cyclic covering of $X_K$.
The \emph{Blanchfield pairing} $\Bl_K$ of $K$ is a nonsingular sesquilinear Hermitian form
\[
\Bl_K \colon H_1(X_K; \Z[t^{\pm 1}]) \times H_1(X_K; \Z[t^{\pm 1}]) \to \Q(t)/\Z[t^{\pm 1}],
\]
which may be defined by setting its adjoint $x\mapsto \Bl_K(-,x)$ to be the composition
\begin{multline}\label{eq:defBl}
	H_1(X_K; \Z[t^{\pm 1}]) \xrightarrow{\makebox[2.5em]{}} H_1(X_K, \partial X_K; \Z[t^{\pm 1}])
			 \xrightarrow{\makebox[2.5em]{\footnotesize PD$^{-1}$}} H^2(X_K; \Z[t^{\pm 1}])
			 \\ \xrightarrow{\makebox[2.5em]{$\scriptstyle\beta_K^{-1}$}} H^1(X_K; \Q(t)/\Z[t^{\pm 1}])
			 \xrightarrow{\makebox[2.5em]{$\kappa$}} \ohom_{\Z[t^{\pm 1}]}( H_1(X_K; \Z[t^{\pm 1}]), \Q(t)/\Z[t^{\pm 1}] ),
\end{multline}
where the first map is the natural one, PD denotes the Poincaré duality map,
$\beta_K$
denotes the Bockstein connecting homomorphism in cohomology induced by
the short exact sequence
$0 \to \Z[t^{\pm 1}] \to \Q(t) \to \Q(t)/\Z[t^{\pm 1}] \to 0$,
and $\kappa$ is the so-called Kronecker evaluation
(see e.g.~\cite{FP17} for details regarding this definition).
The involution $\ol{\,\cdot\,}$ on $\Z[t^{\pm 1}]$ is given by $\ol{p(t)} = p(t^{-1})$,
and for a $\Z[t^{\pm 1}]$-module $M$, $\ol{M}$ denotes the module given by the same
abelian group as $M$, and $p(t)$ acting as $\ol{p(t)}$.
We write $\ol{\hom(\,\cdot\,,\,\cdot\,)}$ as $\ohom(\,\cdot\,,\,\cdot\,)$.
A submodule $P \subset H_1(X_K; \Z[t^{\pm 1}])$ satisfying $P = P^\perp$ with respect to $\Bl_K$ is called a \emph{metabolizer}.

Let us now recall \cref{thm_blanchfield} from the introduction, and prove it.
\thmblanchfield*
\begin{proof}
Let $C$ be a homotopy ribbon concordance from $J$ to~$K$
and let $X_C$ be its exterior,
i.e.~the complement of an open tubular neighborhood of $C$ in $S^3\times[0,1]$.
See \cite{FNOP19} for a thorough treatment of tubular neighborhoods of submanifolds of a topological $4$-manifold.
The boundary $\partial X_C$ is homeomorphic to the gluing of $X_{-J}$ and $X_K$ along their boundary tori,
gluing meridian to meridian and longitude to longitude.

The composition $j\colon X_{J} \to X_C$ of orientation reversal and inclusion
induces a surjection $j_*\colon H_1(X_J;\Z[t^{\pm 1}]) \to H_1(X_C;\Z[t^{\pm 1}])$,
while the inclusion
$k\colon X_K \to X_C$ induces
an injection $k_*\colon H_1(X_K;\Z[t^{\pm 1}]) \to H_1(X_C;\Z[t^{\pm 1}])$.
This was shown in~\cite{FP19}; the proof (also for twisted homology) can be found in \cref{sec_TAM}.

The surjectivity of $j_*$ implies that the kernel of the homomorphism induced by the inclusion
$\partial X_C\to X_C$ is a metabolizer of $-\Bl_J \oplus \Bl_K$ (see \cite{FLNP17} for additivity of the Blanchfield form and~\cite[Prop.~8.2]{F04} for the proof that the kernel is a metabolizer).
The remainder of the proof is an algebraic argument that we outsource to the following lemma,
applied with $e = j_*$, $m = k_*$, $\lambda_E = \Bl_J$, $\lambda_M = \Bl_K$, and $R = \Z[t^{\pm 1}]$.
Such a submodule $G \subset H_1(X_J; \Z[t^{\pm 1}])$ as in the theorem can be chosen to be $\ker (j_*)$.
\end{proof}

For a commutative ring $R$, we write $Q(R)$ for the total quotient ring of $R$, i.e.\ the localization of $R$ with respect to the multiplicative set of elements of $R$ that are not zero divisors.
An element $a$ of an $R$-module is called \emph{torsion} if it is annihilated by some $r\in R$ that is not a zero divisor,
and an $R$-module is called \emph{torsion} if all its elements are torsion.
For a finitely generated torsion $R$-torsion module $M$ with a nonsingular sesquilinear form $\lambda \colon M \times M \to Q(R) / R$, a submodule $P \subset M$ satisfying $P = P^\perp$ with respect to $\lambda$ is called a \emph{metabolizer}.
\begin{lem}
Let $R$ be a commutative ring with an involution.
Let $M, E$ be finitely generated $R$-torsion modules with sesquilinear, Hermitian and nonsingular pairings
$\lambda_M \colon M\times M\to Q(R)/R$ and
$\lambda_E \colon E\times E\to Q(R)/R$.
Let $N$ be an $R$-module with an epimorphism $e\colon E\to N$ and a monomorphism $m\colon M\to N$, such
that the kernel of $(e + m)\colon E\oplus M \to N$ is a metabolizer for~$-\lambda_E \oplus \lambda_M$.
Let $G$ be the kernel of $e$. Then
the pairing induced by $\lambda_E$ on $G^{\bot} / G$ is isometric to $\lambda_M$.
\end{lem}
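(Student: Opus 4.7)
The plan is to use the metabolizer $H \coloneqq \ker(e + m \colon E \oplus M \to N)$ as a ``graph'' relating $E$ and $M$, and to identify $G^\perp/G$ with $M$ via this graph.

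First I would examine the two projections $\pi_E \colon H \to E$ and $\pi_M \colon H \to M$. Since $m$ is injective, $\pi_E$ is injective, so it identifies $H$ with its image $H_E \coloneqq \pi_E(H) = e^{-1}(\im m) \subset E$. Since $e$ is surjective, every $y \in M$ lifts to some $(x,y) \in H$, so $\pi_M$ is surjective, with kernel exactly $G \oplus 0$. Transporting along $\pi_E$ then yields a well-defined isomorphism
\[
\phi \colon H_E/G \xrightarrow{\;\sim\;} M, \qquad x \mapsto y \ \text{ where } (x,y) \in H.
\]

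Next I would split the metabolizer condition $H = H^\perp$ into its two inclusions and exploit them separately. The inclusion $H \subseteq H^\perp$ immediately gives $\lambda_E(x_1,x_2) = \lambda_M(y_1,y_2)$ for all $(x_i,y_i) \in H$. Specialising to $(g,0) \in H$ with $g \in G$ shows $G \subseteq G^\perp$, so $\lambda_E$ descends to a pairing on $G^\perp/G$; applying the identity in general shows that $\phi$ is an isometry from $(H_E/G, \lambda_E)$ onto $(M, \lambda_M)$.

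What remains is to prove $G^\perp = H_E$, which will promote $\phi$ to the desired isometry on $G^\perp/G$. The inclusion $H_E \subseteq G^\perp$ is immediate from the displayed identity (take $x_2 \in G$). For the reverse inclusion I would invoke $H^\perp \subseteq H$ applied to elements of the form $(x,0)$: unwinding the definition gives $(x,0) \in H^\perp$ iff $x \in H_E^\perp$, while $(x,0) \in H$ iff $x \in G$, so $G = H_E^\perp$ and therefore $G^\perp = H_E^{\perp\perp}$. The hard part will then be the double-annihilator identity $H_E^{\perp\perp} = H_E$ for submodules of a finitely generated torsion module equipped with a nonsingular Hermitian pairing; this follows from the nonsingularity of $\lambda_E$ (the adjoint identifies $E$ with $\ohom(E, Q(R)/R)$, under which the perp of a submodule corresponds to its annihilator, whose double annihilator is itself), with the relevant homological facts in the Blanchfield setting gathered in \cref{sec:app}. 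Combining $G^\perp = H_E$ with $\phi$ then completes the identification of $(G^\perp/G, \lambda_E)$ with $(M, \lambda_M)$.
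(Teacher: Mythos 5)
Your setup (the graph of the metabolizer, the isomorphism $H_E/G\cong M$, the inclusion $H\subseteq H^\perp$ giving the isometry and $H_E\subseteq G^\perp$, and the inclusion $H^\perp\subseteq H$ giving $G=H_E^\perp$) coincides with the paper's proof, which takes $S=e^{-1}(\im m)$ and $g=m^{-1}\circ e$. The gap is in the last step, which is in fact the crux of the lemma: you reduce the remaining inclusion $G^{\bot}\subseteq H_E$ to the double-perp identity $H_E^{\bot\bot}=H_E$ and assert that this ``follows from the nonsingularity of $\lambda_E$'' since under the adjoint identification the perp of a submodule is its annihilator, ``whose double annihilator is itself.'' That principle is false over a general commutative ring $R$ with involution, which is the generality in which the lemma is stated and needed: for example, over $R=k[x,y]$ with trivial involution, $E=R/(x)$ with the nonsingular pairing $\lambda(a,b)=ab/x$, the submodule $P$ generated by $y$ has $P^{\bot}=0$, hence $P^{\bot\bot}=E\neq P$. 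Nonsingularity of $\lambda_E$ alone does not give $P^{\bot\bot}=P$; the paper proves such an identity only over a PID (\cref{lem:pid}~(ii), via orders), while the present lemma is applied with $R=\Z[t^{\pm 1}]$, which is not a PID, and \cref{sec:app} contains no statement of this kind, so the reference you point to cannot close the gap.

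What makes the inclusion $G^{\bot}\subseteq H_E$ true here is the extra structure you have already established but do not use: the induced pairing on $H_E/G$ is isometric to $\lambda_M$ and hence nonsingular. The paper's argument runs as follows: given $c\in G^{\bot}$, the map $[a]\mapsto \lambda_E(a,c)$ is a well-defined element of $\ohom_R(H_E/G,Q(R)/R)$, so by nonsingularity of the induced pairing there is $c'\in H_E$ with $\lambda_E(a,c)=\lambda_E(a,c')$ for all $a\in H_E$; then for every $(a,b)\in\ker(e+m)$ one has $a\in H_E$ and $-\lambda_E(a,c-c')+\lambda_M(b,0)=0$, so $c-c'\in\ker(e+m)^{\bot}=\ker(e+m)$, whence $c-c'\in\ker(e+m)\cap E=G\subseteq H_E$ and $c\in H_E$. (Note that in your counterexample-free reduction this is exactly the verification that $H_E^{\bot\bot}=H_E$ in this particular situation, but it needs the nonsingularity of the quotient pairing and the metabolizer equality, not a general double-annihilator fact.) With this step supplied, your argument is complete and agrees with the paper's.
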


\begin{proof}
Let $S = e^{-1}(\im(m)) \subset E$ and define $g\colon S\to M$ as $m^{-1}\circ e$.
Let us check that $g$ respects the pairings. Let $a_1, a_2 \in S$ be given. Then for $i\in \{1,2\}$, we have
$e(a_i) = m(g(a_i))$, so $a_i - g(a_i) \in \ker(e + m)$. Since $\ker(e + m)$ is a metabolizer for $-\lambda_E \oplus \lambda_M$, we have
\[
-\lambda_E(a_1, a_2) + \lambda_M(-g(a_1), -g(a_2)) = 0 ~\Rightarrow~
\lambda_E(a_1, a_2) = \lambda_M(g(a_1), g(a_2))
\]
as claimed.
Next, $g$ is clearly surjective and has kernel $G = \ker e$, so it induces an isometry $S/G \to M$.
Let us now prove that $S = G^{\bot}$.

To show $S\subset G^{\bot}$, let $a\in S$. If $b\in G$, then
\[
\lambda_E(a,b) = -\bigl(-\lambda_E(a,b) + \lambda_M(-g(a),0)\bigr),
\]
which equals zero since $a - g(a)$ and $b$ are both contained in the metabolizer $\ker(e + m)$.

To show $G^{\bot} \subset S$, let $c\in G^{\bot}$ be given. Consider the homomorphism $S/G \to Q(R)/R$
given by $[a] \mapsto \lambda_E(a,c)$. The pairing induced on $S/G$ by $\lambda_E$ is nonsingular since
it is isometric to $\lambda_M$.
So its adjoint map is an isomorphism, whence there exists $[c'] \in S/G$
such that for all $[a] \in S/G$, we have $\lambda_E(a,c) = \lambda_E(a,c')$.
Thus for all $a\in S$, we have $\lambda_E(a, c-c') = 0$. If $a\in E$ and $b\in M$ with $e(a) + m(b) = 0$
are given, then $a\in S$, and so $-\lambda_E(a, c-c') + \lambda_M(b,0) = 0$.
Thus $c-c' \in \ker(e + m)^{\bot} = \ker(e + m)$, and thus $c - c' \in \ker(e + m) \cap E = G \subset S$.
Since $c'$ is also in~$S$, it follows that $c\in S$.
\end{proof}

\subsection{Homotopy ribbon obstructions from the double branched covering and signatures}

Let us write $[K]$ for the S-equivalence class of a knot $K$, i.e.\ $[K] = [K']$ if and only if
$\Bl_K$ and $\Bl_{K'}$ are isometric.
Write \[[J] \geq_S [K]\] if the conclusion of the previous theorem holds, i.e.~if there exists a submodule $G \subset H_1(X_J;\Z[t^{\pm 1}])$ such that $G\subset G^{\bot}$ and the pairing on $G^{\bot} / G$ induced by $\Bl_J$ is isometric to $\Bl_K$.
This relation was introduced by Gilmer \cite{Gi84}, who showed that if $J$ is ribbon concordant to $K$, then $[J]\geq_S [K]$.
\cref{thm_blanchfield} strengthens this statement, and may now be formulated as
\[
J\geq_{\top} K \Rightarrow [J]\geq_S[K].
\]
Gilmer provides an equivalent characterization of the relation $\geq_S$ in terms of Seifert matrices,
which constitutes his main technical tool.
Among other results, he shows that the relation $\geq_S$ is a partial order.
In this text, we will rely on the definition of $\geq_S$ via the Blanchfield pairing, instead of Seifert matrices.

\begin{rem}\label{remark-contrasting-Bl}
If $J$ and $K$ are topologically concordant knots, then $[J]$ and $[K]$ are \emph{algebraically concordant},
i.e.~$-\Bl_J \oplus \Bl_K$ admits a metabolizer.
Let us point out that $[J] \geq_S [K]$ implies that $[J]$ and $[K]$ are algebraically concordant
(and thus, as one might expect, our Blanchfield obstruction for homotopy ribbon concordance is indeed stronger than the usual Blanchfield obstruction for topological concordance).
Indeed, let $G \subset H_1(X_J;\Z[t^{\pm 1}])$ be chosen as above, such that
the pairing induced on $G^{\bot}/G$ by $\Bl_J$ is isometric to $\Bl_K$.
Let $g\colon G^{\bot} \to H_1(X_K; \Z[t^{\pm 1}])$ be the composition
of the projection to $G^{\bot}/G$ and the isometry to $\Bl_K$.
Then, one may check that a metabolizer of $-\Bl_J \oplus \Bl_K$ is given by
$
\{ (x, g(x)) \mid x \in G^{\bot} \} \subset H_1(X_J; \Z[t^{\pm 1}]) \oplus H_1(X_K; \Z[t^{\pm 1}]).
$
\end{rem}

There seems to be no known algorithm to decide whether $[J]\geq_S[K]$ holds for given knots $J$ and $K$.
On the level of modules (forgetting about the Blanchfield pairing),
the relation~$[J]\geq_S[K]$
implies that the Alexander module of $K$ is a quotient of a submodule of the Alexander module of~$J$.
But even this condition seems difficult to check for general given knots $J$ and $K$.

Switching to PID coefficients at least gives testable obstructions beyond divisibility of the Alexander polynomials.
In particular, in this way one obtains homotopy ribbon obstructions from the homology of the double branched covering,
and from Levine-Tristram signatures. Let us state these obstructions now.
Denote the double branched cover of $S^3$ along $K$ by $\Sigma_{K,2}$.
\begin{prop}\label{cor:double}
If $J$, $K$ are knots with $[J] \geq_S [K]$,
then $H_1(\Sigma_{K,2}; \Z)$ is isomorphic to a subgroup of $H_1(\Sigma_{J,2}; \Z)$.
In fact, there exist abelian groups $W, G$ and two short exact sequences%
\begin{equation}\label{eq:AA}\begin{tikzcd}[row sep=small]
&&& 0 \ar[d] \\
&&& G \ar[d] \\
0 \ar[r] & H_1(\Sigma_{K,2}; \Z) \ar[r] & H_1(\Sigma_{J,2}; \Z) \ar[r] & W \ar[r] \ar[d] & 0 \\
&&& G \ar[d] \\
&&& 0
\end{tikzcd}\end{equation}
\end{prop}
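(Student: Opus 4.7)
The plan is to derive the result by base changing \cref{thm_blanchfield} along $\Z[t^{\pm 1}]\to\Z$, $t\mapsto -1$. Write $A_J = H_1(X_J;\Z[t^{\pm 1}])$ and $A_K = H_1(X_K;\Z[t^{\pm 1}])$, so that $H_1(\Sigma_{K,2};\Z)\cong A_K/(t+1)A_K$ and likewise for $J$. Since a knot's determinant $|\Delta_K(-1)|$ is odd and nonzero, $t+1$ is coprime to the Alexander polynomial; combined with the $\Z$-torsion-freeness of the Alexander module, this gives $A_J[t+1]=A_K[t+1]=0$, so in particular any submodule of $A_J$ has no $(t+1)$-torsion.

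By \cref{thm_blanchfield} there is $G_0\subset A_J$ with $G_0\subset G_0^\perp$ and $G_0^\perp/G_0\cong (A_K,\Bl_K)$. I would apply the long exact $\Tor$ sequence for $-\otimes_{\Z[t^{\pm 1}]}\Z$ to both
\[ 0\to G_0\to G_0^\perp\to A_K\to 0 \quad\text{and}\quad 0\to G_0^\perp\to A_J\to A_J/G_0^\perp\to 0. \]
The vanishing of $(t+1)$-torsion reduces the first sequence to
\[ 0\to G_0/(t+1)\to G_0^\perp/(t+1)\to H_1(\Sigma_{K,2};\Z)\to 0. \]
For the second, Blanchfield duality identifies $A_J/G_0^\perp$ with $\ohom_{\Z[t^{\pm 1}]}(G_0,\Q(t)/\Z[t^{\pm 1}])$; using the base ring change results of \cref{sec:app}, this dual has no $(t+1)$-torsion and $(t+1)$-cokernel isomorphic to $G_0/(t+1)$. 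Hence the second sequence reduces to
\[ 0\to G_0^\perp/(t+1)\to H_1(\Sigma_{J,2};\Z)\to G_0/(t+1)\to 0. \]
Setting $G=G_0/(t+1)$, these fit into a three-step filtration of $H_1(\Sigma_{J,2};\Z)$ with successive quotients $G$, $H_1(\Sigma_{K,2};\Z)$, and $G$.

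The remaining step is purely group-theoretic: from any such three-step filtration of a finite abelian group in which the outer quotients coincide, the middle quotient embeds as an abstract subgroup, and the cokernel fits into a short exact sequence $0\to G\to W\to G\to 0$. I would verify this by reducing to a single prime and applying the structure theorem to track the interaction of invariant factors across the filtration, yielding the two short exact sequences of the proposition.

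The main obstacle I expect is the Blanchfield-duality identification in the second sequence: while Blanchfield duality itself is standard, the computation that base change to $\Z$ along $t\mapsto -1$ is compatible with the pairing structure (in particular, that $(A_J/G_0^\perp)/(t+1) \cong G_0/(t+1)$ and has no $(t+1)$-torsion) is precisely what requires the machinery of \cref{sec:app}. Once that compatibility is in hand, the remainder is diagram-chase plus a standard invariant-factor argument.
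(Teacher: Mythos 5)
Your proof is correct, but it takes a genuinely different route from the paper's. The paper changes coefficients first, using \cref{prop:appmain,prop:appmain2} to transport the $\geq_S$ data to the PID $R=\Z[t^{\pm 1}]/(t+1)\cong\Z$, and then runs a linking-form duality argument over that PID (\cref{lem:pid}~(iii)): the embedding of $H_1(\Sigma_{K,2};\Z)$ into $H_1(\Sigma_{J,2};\Z)$ is built from the adjoint of the nonsingular pairing $\Bl^R_J$, and both exact sequences of \cref{eq:AA} fall out of that diagram. You instead base-change the filtration $G_0\subseteq G_0^{\bot}\subseteq H_1(X_J;\Z[t^{\pm 1}])$ directly (the Tor-vanishing via coprimality of $t+1$ with $\Delta$ plus $\Z$-torsion-freeness, \cref{lem:alexztorsionfree}, is fine; for the top layer you do need the dual description of $H_1(X_J;\Z[t^{\pm 1}])/G_0^{\bot}$, which is exactly the exact sequence \cref{eq:ses} with target $\Xi^{-1}\Z[t^{\pm 1}]/\Z[t^{\pm 1}]$ and its base change computed in the proof of \cref{prop:appmain2}~(ii), as you anticipate), and then you discard the pairing entirely. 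Your remaining group-theoretic claim is true, and the cleanest justification is not invariant-factor bookkeeping but Pontryagin duality: a finite abelian group $S$ with subgroup $G$ always contains a subgroup $H'\cong S/G$ with $S/H'\cong G$ (transport $G^{\perp}\subseteq\widehat S\cong S$); applying this inside $S=G_0^{\bot}/(t+1)$ gives the embedding of $H_1(\Sigma_{K,2};\Z)$ whose cokernel $W$ contains $S/H'\cong G$ with quotient $A/S\cong G$. What each approach buys: the paper's \cref{lem:pid} is uniform over any PID with involution and also feeds \cref{cor:sig}, while your route shows that once the three-step filtration of $H_1(\Sigma_{J,2};\Z)$ is in hand, no pairing is needed. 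Two small points: the hypothesis $[J]\geq_S[K]$ hands you $G_0$ by definition, so citing \cref{thm_blanchfield} (which assumes $J\geq_{\top}K$) is unnecessary; and the duality identification should be phrased with values in $\Xi^{-1}\Z[t^{\pm 1}]/\Z[t^{\pm 1}]$ to match the appendix lemmas, since over the non-PID $\Z[t^{\pm 1}]$ the surjectivity onto $\ohom_{\Z[t^{\pm 1}]}(G_0,-)$ is exactly what \cref{lem:tor}~(iii) provides.
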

The proof will be given in \cref{section:Blanchfield-PID}.
Recall that for a knot~$K$, the order of $H_1(\Sigma_{K,2};\Z)$ is $\det K$.
So the existence of the abelian groups $W$ and $G$ satisfying \cref{eq:AA}
implies that $\det K$ divides $\det J$, and that the quotient $\det J / \det K$ is a square.
In fact, that much would already follow from $\Delta_K$ dividing $\Delta_J$ and the Fox-Milnor condition.
However, \cref{eq:AA} has stronger implications.
For example, if $[J] \geq_S [K]$ and $p$ is an odd prime not dividing $\det J / \det K$,
then the $p$-primary parts of $H_1(X_J;\Z)$ and $H_1(X_K;\Z)$ are isomorphic.
To give another example,
if for some odd prime $p$, the $p$-primary parts of
$H_1(X_J; \Z)$ and $H_1(X_K; \Z)$ are $\Z/p \oplus \Z/p^5$ and $\Z/p^2$, respectively, then
$[J] \not\geq_S [K]$ follows (because the cokernel $W$ of any injection $\Z/p^2 \to \Z/p \oplus \Z/p^5$
is isomorphic to $\Z/p \oplus \Z/p^3$, and this group $W$ does not admit a group $G$ fitting into \cref{eq:AA}).
In contrast, if $J$ and $K$ are merely assumed to be topologically concordant,
then the only statement that needs to follow about the groups $H_1(\Sigma_{J,2};\Z)$ and $H_1(\Sigma_{K,2};\Z)$
is that their direct sum has square order, i.e.~that $\det J\cdot\det K$ is a square.

Let us denote the Levine-Tristram signature and nullity, of a knot $K$ at $e^{\pi i x} \in S^1$ for $x\in \R$, by $\sigma_x(K)$ and $\eta_x(K)$ respectively.
We write $\deg_x(K)$ for the multiplicity of the root $e^{\pi ix}$ of $\Delta_K$, which equals $0$ if $e^{\pi ix}$ is not a root of $\Delta_K$.
In this notation, the classical knot signature corresponds to~$\sigma_1$.
Note that $\deg_x(K) \geq \eta_x(K)$ holds for all knots $K$ and all $x\in \R$.
\begin{prop}\label{cor:sig}
If $J$, $K$ are knots with $[J] \geq_S [K]$, then for all $x\in\R$,
\[
\deg_x(J) - \deg_x(K)
~\geq~
\eta_x(J) - \eta_x(K)
~\geq~
|\sigma_x(J) - \sigma_x(K)|.
\]
\end{prop}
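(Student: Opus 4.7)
The plan is to fix $x \in \R$, set $\omega := e^{\pi i x}$, localize the Blanchfield pairings at the prime of $\R[t^{\pm 1}]$ corresponding to $\omega$, and then deduce the two inequalities from algebraic statements about non-singular Hermitian torsion modules over the resulting discrete valuation ring.

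Let $p \in \R[t^{\pm 1}]$ be the minimal polynomial of $\omega$ over $\R$: so $p = t - \omega$ if $\omega = \pm 1$, and $p(t) = t^2 - 2\cos(\pi x)\,t + 1$ otherwise. Since the involution $t \leftrightarrow t^{-1}$ preserves $(p)$ up to units, the localization $R := \R[t^{\pm 1}]_{(p)}$ is a discrete valuation ring with involution, and its residue field $k := R/(p)$ is either $\R$ or $\C$. By the base-change results of \cref{sec:app}, the hypothesis $[J] \geq_S [K]$ descends to the Blanchfield pairings over $R$: writing $H^p_\bullet$ for the $(p)$-primary part of $H_1(X_\bullet; R)$ with its non-singular Blanchfield form $\Bl^p_\bullet$, there exists an isotropic submodule $G \subset H^p_J$ with $G \subset G^\perp$ such that $G^\perp/G$ is isometric to $(H^p_K, \Bl^p_K)$ as a Hermitian $R$-module.

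The next step is the Blanchfield/Seifert-form dictionary, which expresses the three Levine-Tristram invariants as functions of $(H^p, \Bl^p)$: namely $\deg_x = \mathrm{length}_R H^p$, $\eta_x = \dim_k(H^p/pH^p)$ (the number of cyclic summands of $H^p$ as an $R$-module), and $\sigma_x$ equals the signature of the non-degenerate $k$-Hermitian form induced by $\Bl^p$ on the semisimple quotient of $H^p/pH^p$. With these identifications, the first inequality reduces to a length count: non-singularity of $\Bl^p_J$ gives $\mathrm{length}_R H^p_J - \mathrm{length}_R H^p_K = 2\,\mathrm{length}_R G$, while the standard bound on minimal numbers of generators of sub-quotients yields $\dim_k H^p_J/pH^p_J - \dim_k H^p_K/pH^p_K \leq 2\,\mathrm{length}_R G$ (using that $H^p_J/G^\perp \cong G^*$ as $R$-modules and so has the same minimal number of generators as $G$). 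The second inequality follows from a Witt-type argument: the image $\bar G$ of $G$ in $H^p_J/pH^p_J$ is isotropic for the induced residue Hermitian form over $k$, so the residue form of $G^\perp/G$ is obtained from that of $H^p_J$ by discarding a metabolic summand of $k$-dimension $2\dim_k \bar G$, which changes the signature by at most $2\dim_k \bar G \leq \eta_x(J) - \eta_x(K)$.

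I expect the main obstacle to be the precise identification of $\sigma_x$ with the residue-form signature in the third paragraph; one must track how higher-multiplicity $p$-power summands of $H^p$ contribute to $\sgn((1-\omega)V + (1-\bar\omega)V^T)$, which is cleanest by diagonalizing $(H^p, \Bl^p)$ into cyclic Hermitian summands and applying Kearton's description of $\Bl^p$ in terms of a Seifert form $V$. The degenerate endpoints $\omega = \pm 1$ are either automatic from concordance invariance of $\sigma_1$ or handled by the convention $\eta_0 = 0$.
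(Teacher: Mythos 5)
Your overall framework is the same as the paper's: localize the real Blanchfield pairing at the prime corresponding to $\omega=e^{\pi i x}$, transport the hypothesis $[J]\geq_S[K]$ to the resulting DVR via the base-change results of \cref{sec:app}, and read the invariants off the local module. Your argument for the first inequality is correct and in fact more direct than the paper's derivation: order multiplicativity gives $\deg_x(J)-\deg_x(K)=2\,\mathrm{length}(G)$, and counting minimal generators over the local ring, using $H_1(X_J;R)/G^{\bot}\cong\ol{G}$, gives $\eta_x(J)-\eta_x(K)\le 2\,\mathrm{length}(G)$.

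The gap is in the signature inequality, and it is not just the technical point you flag in your last paragraph; the proposed dictionary and mechanism are wrong. First, $\sigma_x$ is not determined by the $(p)$-primary local module at $\omega$: that datum only controls the signature \emph{jump} at $\omega$ (the trefoil and the unknot have the same trivial local module at any $\omega$ away from the roots of $t^2-t+1$, yet different $\sigma_x$ there). To compare values one must first use that $[J]\ge_S[K]$ implies algebraic concordance, so the averaged signatures near $x$ agree and $\sigma_x(J)-\sigma_x(K)$ equals the difference of jumps; this is the role of $d_x$ in the paper. Second, even the jump is not the signature of a residue form on $H^p/pH^p$: by Levine's theorem it is $-\sum_i\sgn$ of the \emph{even}-weight graded pieces $\Phi_{2i}$ of the dévissage filtration, and odd-weight pieces do not contribute, so there is no single well-defined $k$-form whose signature computes it when higher $\tau$-powers occur. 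Third, the quantitative bound $2\dim_k\bar G$ fails already at the level of linking forms over the DVR: take $A=R/(\tau^2)$ cyclic with $\lambda(e,e)=\tau^{-2}$ and $G=\langle\tau e\rangle$; then $G=G^{\bot}$, so the sub-quotient is trivial, the image $\bar G$ of $G$ in $A/\tau A$ is zero, yet the even-weight signature sum changes by $\pm1$ (and correctly $\eta$ drops by $1$, not by $0$). Since $G$ typically lies inside $\tau H^p_J$ at each quotienting step, $\dim_k\bar G$ cannot control $|\sigma_x(J)-\sigma_x(K)|$. What replaces your Witt-type reduction in the paper is \cref{lem:devissage}: an induction on $\mathrm{length}(G)$ that, for each order-$\tau$ step, tracks how cyclic summands either drop two weights (preserving parity and the even signature sum) or trade a hyperbolic plane between adjacent weights, and the resulting identities among the $\Phi_i$ yield both inequalities. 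Your treatment of $\omega=\pm1$ is harmless, but note the correct reason these cases are trivial is that $\pm1$ is never a root of a knot Alexander polynomial; condition \cref{eq:technical1} fails for the localizations at $t\mp1$, so the local theory genuinely does not apply there.
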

The proof will be given in \cref{section:devissage}.
Note that in the particular case, where $e^{\pi i x}$ is not a root of the quotient $\Delta_J / \Delta_K$, one obtains that $\sigma_x(J) = \sigma_x(K)$.
In contrast, if $J$ and $K$ are merely assumed to be topologically concordant, only the following inequalities (which are weaker than the inequalities given by \cref{cor:sig}) need to hold for all $x\in\R$:
\[
\deg_x(J) + \deg_x(K)
~\geq~
\eta_x(J) + \eta_x(K)
~\geq~
|\sigma_x(J) - \sigma_x(K)|.
\]

The proofs of \cref{cor:double,cor:sig} follow a similar path for the start of their proofs.
The homology of the double branched covering and the Levine-Tristram signatures
are determined by the Blanchfield pairings $\Bl^R$ with certain PIDs $R$ as coefficient module.
As discussed in \cref{sec:app}, $\Bl$ determines $\Bl^R$,
and $[J] \geq_S [K]$ implies the condition for $\Bl^R$ analogous to \cref{thm_blanchfield},
which in turn (using that $R$ is a PID) implies \cref{eq:AA} in \cref{cor:double}
and the inequalities in \cref{cor:sig}.
Before delving into those proofs in full detail in \cref{section:Blanchfield-PID,section:devissage},
let us give two example applications.

\begin{exmp}
Let $J'$ and $K'$ be the knots $8_{18}$ and $8_{20}$ from the knot table \cite{LM20}.
Let $J = J'\# -J'$ and $K = K'\# -K'$. Clearly, $J$ and $K$ are ribbon knots.
We claim \cref{cor:double} implies that $J$ is not homotopy ribbon concordant to $K$,
whereas no previously available obstructions could even show that $J$ is not ribbon concordant to $K$.

Indeed, one computes 
$H_1(\Sigma_{J,2}; \Z) \cong H_1(\Sigma_{J',2}; \Z)^{\oplus 2} \cong (\Z/3)^{\oplus 4} \oplus (\Z/5)^{\oplus 2}$
and $H_1(\Sigma_{K',2};\Z) \cong H_1(\Sigma_{K,2};\Z)^{\oplus 2} \cong (\Z/9)^{\oplus 2}$.
Since $p = 3$ does not divide $\det J / \det K = 25$, and the $p$-primary parts of the groups
$H_1(\Sigma_{J,2}; \Z)$ and $H_1(\Sigma_{K',2};\Z)$ are not isomorphic, \cref{cor:double} indeed
implies that $J \not \geq_{\top} K$.

Let us now check previously known ribbon concordance obstructions.
The Alexander polynomial $\Delta_{J'} = (t^2 - t + 1)^2(t^2 - 3t + 1)$
is divisible by the Alexander polynomial $\Delta_{K'} = (t^2 - t + 1)^2$,
and so $\Delta_J = \Delta_{J'}^2$ is divisible by $\Delta_K = \Delta_{K'}^2$.
Thus the known Alexander polynomial homotopy ribbon obstruction \cite{FP19} is not applicable for $J$ and $K$.
Moreover, one may check by computer calculation that there exist bigraded injections
$\mathcal{H}(K) \hookrightarrow \mathcal{H}(J)$ for $\mathcal{H}$ either Khovanov homology,
or knot Floer homology. So the ribbon concordance obstructions of Khovanov homology \cite{LZ19}
and knot Floer homology \cite{Z19} are not applicable for $J$ and $K$, either.
In fact, one may avoid computer calculation and read the Khovanov homologies of $J$ and $K$
from their Jones polynomials and knot signatures, and the knot Floer homologies of $J$ and $K$ from
their Alexander polynomials and knot signatures, since both $J$ and $K$ are quasi-alternating \cite{MO08}.
\end{exmp}

\begin{exmp}
Let $K$ be the knot $12n_{582}$ from the knot table \cite{LM20}.
This knot is a ribbon knot with Alexander polynomial $(t^{-1}-1+t)^2$ and
nonconstant signature profile: $\deg_{1/3}(K) = 2, \eta_{1/3}(K) = 1, \sigma_{1/3}(K) = 1$.
As an example application of the Blanchfield obstructions, let us prove that if $J$ is a prime knot with crossing number~12 or less such that $J\geq_{\top} K$, then $J \in \{K, K'\}$, where $K'$ is the knot $10_{99}$ in the knot table.

First off, $J$ needs to be topologically slice. This leaves (up to symmetry, i.e.~counting mirror images and reverses only once) 159 knots from the table.
Among those, there are 134 knots with $\deg_{1/3} = 0$ (i.e.~$\Delta(e^{\pi i/3}) \neq 0$) and 10 knots with
${\deg_{1/3} = 2}$, ${\eta_{1/3} = 2}$, ${\sigma_{1/3} = 0}$.
These are ruled out by the first inequality in \cref{cor:sig}, whereas $-K$
is ruled out by the second inequality, since ${\deg_{1/3}(-K) = 2}$, ${\eta_{1/3}(-K) = 1}$, ${\sigma_{1/3}(-K) = -1}$.
There remain 15 knots. Among them, 13 have double branched covering with first homology group isomorphic to
$\Z/9, \Z/81$ or $\Z/9\oplus \Z/25$. So they are ruled out by \cref{cor:double}, since
the first homology group of the double branched covering of $K$ is $\Z/3\oplus \Z/3$, which does not inject into a cyclic group (note that here we are not using the vertical exact sequence in \cref{eq:AA}).

The two remaining knots are $K$ itself and $K'$ (the latter being amphicheiral). We have $H_1(K'; \Z) \cong \Z/9 \oplus \Z/9$ and $\deg_{1/3}(K') = 4, \eta_{1/3}(K') = 2, \sigma_{1/3}(K') = 0$, so \cref{cor:double,cor:sig} do not obstruct $K' \geq_{\top} K$. One may check that $K'$ is not ribbon concordant to~$K$, e.g.~using Khovanov homology~\cite{LZ19}.
However, we do not know the answer to the following.
\end{exmp}
\begin{q}
Does $[10_{99}] \geq_S [12n_{582}]$ hold? If so,
is $10_{99}$ homotopy ribbon concordant to $12n_{582}$?
\end{q}
\begin{rem}
In light of the obstructions for homotopy ribbon concordances coming from the Blanchfield pairing (\cref{thm_blanchfield}) and from twisted Alexander polynomials (\cref{thm_division}),
one might hope for an obstruction coming from twisted Blanchfield pairings.
We have not pursued this any further
in order to avoid overly complicating this text, and because it was not necessary to obtain the application in \cref{thm_main}.
\end{rem}

\begin{rem}
The obstruction given in \cref{cor:sig} could be strengthened a bit by considering the full isometry type of the Blanchfield pairing over $\mathbb{R}$, which is not completely captured by nullities, signatures and the Alexander polynomial.  Likewise, taking the linking pairing of the double branched cover into account, and not just its homology, would strengthen \cref{cor:double}.
\end{rem}

\subsection{The Blanchfield pairing with PID coefficients}\label{section:Blanchfield-PID}
This subsection is devoted to the proof of \cref{cor:double}. Let us jump directly into it, and prove the
crucial \cref{lem:pid} afterwards.

\begin{proof}[Proof of \cref{cor:double}]
First we appeal to two technical results dealing with change of base ring for the Blanchfield form, whose proofs we have relegated to \cref{sec:app}.
The $\Z[t^{\pm 1}]$-algebra $R\coloneqq \Z[t^{\pm 1}]/(t + 1)$
has only $(t+1)$-torsion, and is thus admissible in the terminology of \cref{sec:app}.
So, by \cref{prop:appmain},
for all knots $L$, there is a Blanchfield pairing $\Bl^R_L$ defined on $H_1(X_L; R)$,
which is determined by $\Bl_L$.
Moreover, by \cref{prop:appmain2}, $[J] \geq_S [K]$ implies that
there exists an $R$-submodule $G \subset H_1(X_J; R)$ such that $G\subset G^\bot$,
and $G^{\bot} / G$ is isomorphic to $H_1(X_K;R)$.

Note that $R \to \Z, t \mapsto -1$ is an isomorphism
of $\Z[t^{\pm 1}]$-algebras (with trivial involution), where $t$ acts on $\Z$ by $-1$.
In particular, as a ring $\Z[t^{\pm 1}]/(t + 1)$ is a PID. By \cref{lem:pid}~(iii) below,
there are two short exact sequences
\[
\begin{array}{c}
\begin{tikzcd}
0 \ar[r] & H_1(X_K;R) \ar[r] & H_1(X_J;R) \ar[r] & W \ar[r] & 0,
\end{tikzcd}
\\
\begin{tikzcd}
0 \ar[r] & G \ar[r] & W \ar[r] & G \ar[r] & 0.
\end{tikzcd}
\end{array}
\]
Note that for all knots~$L$, $H_1(\Sigma_{2,L}; \Z)$ and $H_1(X_L; R)$ are isomorphic abelian groups. Indeed, since $H_1(X_L;R) \cong H_1(X_L;\Z[t^{\pm 1}]) \otimes_{\Z[t^{\pm 1}]} R \cong H_1(X_L;\Z[t^{\pm 1}])/(t+1)$ by a straightforward application of the universal coefficient theorem (noting $\displaystyle{\Tor_1^{\Z[t^{\pm 1}]}(\Z,R) =0}$), both modules are presented by the matrix~$(tV-V^T)|_{t=-1} = -V-V^T$, where $V$ is a Seifert matrix for~$L$. Thus, we have indeed constructed the two short exact sequences \cref{eq:AA}.
\end{proof}
As a side remark, note that under the identification of
$H_1(X_L; R)$ with $H_1(\Sigma_{2,L}; \Z)$, we have $\Bl^R_L = 2\cdot \text{lk}_L$, where $\text{lk}_L$ denotes the usual linking pairing on $H_1(\Sigma_{2,L}; \Z)$.

Also, note that one could prove the conclusion of \cref{cor:double} under the stronger hypothesis that $J \geq_{\top} K$ instead of $[J] \geq_S [K]$, namely
by using that \cref{thm_blanchfield} holds not just for $\Z[t^{\pm 1}]$, but for all $\Z[t^{\pm 1}]$-algebras $R$ without $t-1$ and $\Xi$-torsion (cf.~\cref{sec:app}), with the analogous proof. This proof would not require the relationship between $\Bl$ and $\Bl^R$, but would result in a weaker version of \cref{cor:double}.

Now, let us come to \cref{lem:pid}. Let us consider modules and pairings over a general PID $R$ (PIDs are understood to be commutative in this text).
The \emph{order} $\ord M$ of a finitely generated torsion module $M$ over $R$ is defined as follows:
pick a decomposition of $M$ as sum of cyclic modules $R/(a_1) \oplus \ldots \oplus R/(a_n)$.
Then $\ord M \coloneqq a_1 \cdot \ldots \cdot a_n \in R$. The order is well-defined up to multiplication with units in $R$; we use the symbol $\doteq$ to denote that two elements of $R$ are equal up to multiplication with a unit.
Note that if $R$ has an involution $\ol{\,\cdot\,}$, then $\ord \ol{M} = \ol{\ord M}$.
This definition of order agrees with the more general definition in \cref{subsec:twistedalexofknots}
of orders of modules over rings that need not be PIDs.
\begin{lem}\label{lem:pid}
Let $R$ be a PID with an involution $\ol{\,\cdot\,}$.
\begin{enumerate}[label=\emph{(\roman*)}]
\item
Every finitely generated $R$-torsion module $M$ is $($noncanonically$)$
 isomorphic to $\hom(M, Q(R)/R)$.
\end{enumerate}
Let $A$ be a finitely generated $R$-torsion module and let $\lambda\colon A\times A\to Q(R)/R$ be a sesquilinear Hermitian nonsingular pairing.
\begin{enumerate}[resume,label=\emph{(\roman*)}]
\item For $M$ a submodule of $A$, we have $\ord M \cdot \ord M^{\bot} \doteq \ord A$ and $M = M^{\bot\bot}$.
\item For a submodule $G \subset A$ such that $G\subset G^{\bot}$, there is an $R$-module $W$ fitting into the following two $($nonnatural$)$ short exact sequences as follows.
\[
\begin{tikzcd}[row sep=small]
&&& 0 \ar[d] \\
&&& \ol{G} \ar[d] \\
0 \ar[r] & {G^{\bot} / G} \ar[r] & A \ar[r] & W \ar[r] \ar[d] & 0. \\
&&& {G} \ar[d] \\
&&& 0
\end{tikzcd}
\]
\end{enumerate}
\end{lem}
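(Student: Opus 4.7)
The proof of the lemma splits into three parts corresponding to (i), (ii), (iii).

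For part (i), my plan is to apply the structure theorem over the PID $R$ to write $M \cong \bigoplus_i R/(a_i)$. For each cyclic summand, a homomorphism $R/(a) \to Q(R)/R$ is determined by the image of $1$, which must lie in the $a$-torsion subgroup $\tfrac{1}{a}R/R \cong R/(a)$. Hence $\hom(R/(a), Q(R)/R) \cong R/(a)$, and taking direct sums gives the desired noncanonical isomorphism.

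For part (ii), nonsingularity of $\lambda$ provides an adjoint isomorphism $\hat\lambda \colon A \xrightarrow{\sim} \ohom(A, Q(R)/R)$. Since $R$ is a PID, $Q(R)/R$ is an injective $R$-module, so the restriction map $\ohom(A, Q(R)/R) \to \ohom(M, Q(R)/R)$ is surjective with kernel $\ohom(A/M, Q(R)/R)$. Composing with $\hat\lambda^{-1}$ yields the short exact sequence $0 \to M^\bot \to A \to \ohom(M, Q(R)/R) \to 0$. Part (i) gives $\ord \ohom(M, Q(R)/R) \doteq \ord M$, and multiplicativity of orders in short exact sequences then forces $\ord A \doteq \ord M \cdot \ord M^\bot$. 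The equality $M = M^{\bot\bot}$ follows from the tautological inclusion $M \subset M^{\bot\bot}$ together with two applications of this order formula, which force $\ord M \doteq \ord M^{\bot\bot}$.

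For part (iii), my plan is to avoid any explicit construction of a submodule of $A$ isomorphic to $G^\bot/G$, and instead obtain both sequences by two rounds of dualisation through $\ohom(-, Q(R)/R)$, which is exact on our category because $Q(R)/R$ is injective over the PID $R$. First, dualising $0 \to G \to A \to A/G \to 0$ and invoking $\hat\lambda$ together with (i) yields a noncanonical isomorphism $A/G \cong \ol{G^\bot}$. The submodule $\ol G \subset \ol{G^\bot}$ then corresponds to some $P/G \subset A/G$, where the preimage $P \subset A$ satisfies $0 \to G \to P \to \ol G \to 0$ and $A/P \cong \ol{G^\bot}/\ol G \cong \ol{G^\bot/G}$. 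Because $G^{\bot\bot} = G$ by (ii), the induced pairing $\bar\lambda$ on $G^\bot/G$ is nonsingular and Hermitian, so its adjoint together with (i) identifies $\ol{G^\bot/G}$ with $G^\bot/G$, giving $A/P \cong G^\bot/G$. This yields the short exact sequence $0 \to P \to A \to G^\bot/G \to 0$. Dualising it once more and using $\ol A \cong A$ together with $\ol{G^\bot/G} \cong G^\bot/G$ produces $0 \to G^\bot/G \to A \to \ol P \to 0$. Setting $W \coloneqq \ol P$ and applying the involution to $0 \to G \to P \to \ol G \to 0$ gives the vertical sequence $0 \to \ol G \to W \to G \to 0$.

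The main delicacy I anticipate is the disciplined bookkeeping of the various noncanonical isomorphisms produced by (i) and by the adjoints. In particular, part (ii) must be established before (iii), since the nonsingularity of the induced form on $G^\bot/G$—which powers the identification $\ol{G^\bot/G} \cong G^\bot/G$—is exactly the statement $G^{\bot\bot} = G$ inside $A$.
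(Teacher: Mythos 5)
Your proofs of (i) and (ii) are essentially the paper's: the cyclic-module computation for (i), and for (ii) the exact sequence $0 \to M^{\bot} \to A \to \ohom(M,Q(R)/R) \to 0$ obtained from the adjoint of $\lambda$ and the injectivity of $Q(R)/R$, followed by multiplicativity of orders and the tautological inclusion $M\subset M^{\bot\bot}$. For (iii) you take a genuinely different, essentially dual, route. The paper embeds $G^{\bot}/G$ into $\ol{A}\cong A$ directly, as the composition of the adjoint of $\lambda$ with restriction to $G^{\bot}$ and a fixed isomorphism $\ohom(G^{\bot},Q(R)/R)\cong\ol{G^{\bot}}$ (injectivity being exactly $G^{\bot\bot}=G$), takes $W$ to be the cokernel, and then needs two further dualisation diagrams to identify the subquotients $\ol{G}$ and $G$ in the vertical sequence. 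You instead first manufacture the submodule $P$ with $G\subset P\subset A$, $P/G\cong\ol{G}$ and $A/P\cong\ol{G^{\bot}/G}\cong G^{\bot}/G$, by transporting $\ol{G}\subset\ol{G^{\bot}}$ through the isomorphism $A/G\cong\ol{G^{\bot}}$, and then dualise the single sequence $0\to P\to A\to G^{\bot}/G\to 0$: this flips into the horizontal sequence with $W=\ol{P}$, and the vertical sequence is literally the bar of $0\to G\to P\to\ol{G}\to 0$, so it comes for free. What your route buys is this cleaner derivation of the vertical sequence; what it costs is the identification $\ol{G^{\bot}/G}\cong G^{\bot}/G$, for which you invoke nonsingularity of the induced pairing on $G^{\bot}/G$. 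Be careful there: $G^{\bot\bot}=G$ only gives injectivity of its adjoint; surjectivity needs the standard extra step (extend a functional on $G^{\bot}/G$ to $A$ using injectivity of $Q(R)/R$, write it as $\lambda(-,c)$, and observe $c\in G^{\bot}$ by Hermitianness) — the same argument the paper uses for exactness of its dualised rows. With that half-step spelled out, your argument is complete and correct.
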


\begin{proof}
(i)
Cyclic torsion modules are (noncanonically) isomorphic to $R / (a)$ for some $a\in R\setminus\{0\}$, and an isomorphism $R/(a) \to \hom(R/(a), Q(R)/R)$ is given by $1\mapsto (1\mapsto 1/a)$.
This suffices, since $M$ decomposes as a sum of cyclic torsion modules, and $\hom(N\oplus N', Q(R)/R) \cong
\hom(N, Q(R)/R)\oplus \hom(N', Q(R)/R)$.
\smallskip

(ii)
There is a short exact sequence
\begin{equation}\label{eq:dualquotient}
\begin{tikzcd}
0 \ar[r] & M^{\bot} \ar[r] & A \ar[r] & \ohom(M,Q(R)/R) \ar[r] & 0,
\end{tikzcd}
\end{equation}
where the second map is the inclusion, and the third map is the composition of the adjoint map $A\to \ohom(A,Q(R)/R)$ of $\lambda$ (which is an isomorphism) and the
map $\ohom(A,Q(R)/R) \to \ohom(M,Q(R)/R)$, which is surjective
since $Q(R)/R$ is an injective $R$-module, and so the contravariant functor $\ohom(-,Q(R)/R)$ is exact.
The orders of modules in a short exact sequence are multiplicative,
so $\ord M^{\bot} \cdot \ord \ohom(M,Q(R)/R) \doteq \ord A$,
which implies $\ord M^{\bot} \cdot \ord \ol{M} \doteq \ord A$ using (i).
 In the same way, we obtain $\ord M^{\bot\bot} \cdot \ord \ol{M^{\bot}} \doteq \ord A$. Thus we get
$\ord M \doteq \ord M^{\bot\bot}$.
The inclusion $M\subset M^{\bot\bot}$ follows directly from the definition of $^\bot$, and since $M$ and $M^{\bot\bot}$ have the same order, they are equal.
\smallskip

(iii)
Let us write $\iota$ for inclusions of submodules of $A$ into $A$, and
$\psi\colon A\to \ohom(A,Q(R)/R)$ for the adjoint of $\lambda$.
Let $\beta$ be the composition of the maps
\[
\begin{tikzcd}
G^{\bot}  \arrow[r,"\iota",hook] &
A \arrow[r,"\psi"] &
\ohom(A,Q(R)/R) \arrow[r,"\iota^*", two heads] &
\ohom(G^{\bot},Q(R)/R).
\end{tikzcd}
\]

The kernel of $\beta$ is $G^{\bot\bot} = G$.
So $\beta$ induces an injection
\[
\tilde\beta\colon {G^{\bot} / G} \to \ohom(G^{\bot},Q(R)/R).
\]
Fix an isomorphism $\alpha\colon {\ohom(G^{\bot},Q(R)/R)}\to \ol{G^{\bot}}$.
We get the following commutative diagram, in which the two rows and the first column are exact;
this induces the dashed maps, such that the second column is also exact.
\[
\begin{tikzcd}
 && 0 \ar[d] &0\ar[d] \\
0 \ar[r] & {G^{\bot} / G} \ar[r,"\alpha\circ\tilde\beta"]\ar[d,"\text{id}"] & \ol{G^{\bot}} \ar[r]\ar[d,"\iota"] & \ol{G^{\bot}}/\im\alpha\tilde\beta \ar[r]\ar[d,dashed] & 0 \\
  0 \ar[r] & {G^{\bot} / G} \ar[r,"\iota\circ\alpha\circ\tilde\beta"] & \ol{A} \ar[r]\ar[d] & \ol{A}/\im \iota\alpha\tilde\beta \ar[r]\ar[d,dashed] & 0 \\
&& \ol{A/G^{\bot}} \ar[d]\ar[r,"\text{id}"] & \ol{A/G^{\bot}}\ar[d] \\
&& 0 & 0
\end{tikzcd}
\]
The second row and second column of this diagram form the desired diagram, using that $\ol{A}\cong A$
and setting $W = \ol{A}/\im \iota\alpha\tilde\beta$.
It just remains to check that
$\ol{A/G^{\bot}}\cong G$ and $\ol{G^{\bot}} / \im \alpha\tilde\beta \cong \ol{G}$.

An isomorphism $\ol{A/G^{\bot}} \cong \hom(G,Q(R)/R)$ is given by considering the short exact sequence~\cref{eq:dualquotient},
and $\hom(G,Q(R)/R)$ is isomorphic to ${G}$ by~(i).

In the following commutative diagram, the rows are exact (the surjectivity of the second map in the top row follows from the injectivity of the module $Q(R)/R$ as in the proof of (ii) above), and the first two vertical maps are isomorphisms.
\[
\begin{tikzcd}
0 \ar[r] & {G^{\bot} / G} \ar[r,"\tilde\beta"]\ar[d,"\text{id}"] & \ohom(G^{\bot},Q(R)/R) \ar[d,"\alpha"]\ar[r] & \ohom(G,Q(R)/R) \ar[r]\ar[d,dashed] & 0 \\
0 \ar[r] & {G^{\bot} / G} \ar[r,"\alpha\circ\tilde\beta"] & \ol{G^{\bot}} \ar[r] & \ol{G^{\bot}}/\im\alpha\tilde\beta \ar[r] & 0
\end{tikzcd}
\]
This induces the desired isomorphism $G \cong \ohom(G,Q(R)/R) \to \ol{G^{\bot}} / \im \alpha\tilde\beta$, drawn dashed.%
\end{proof}
As an illustration of how the previous lemma can be used,
we give a new proof of the following corollary, by applying the lemma to $R = \Q[t^{\pm 1}]$.
The corollary was originally shown by Gilmer using Seifert matrices.

\begin{cor}[\cite{Gi84}]\label{cor:Bl-isometric}
If $J$ and $K$ are knots with $[J] \geq_S [K]$ and $\Delta_J \doteq \Delta_K$, then $\Bl_J$ and $\Bl_K$ are isometric.
\end{cor}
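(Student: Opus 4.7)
The plan is to apply \cref{lem:pid} with $R = \Q[t^{\pm 1}]$, a PID, and to show that any submodule witnessing $[J] \geq_S [K]$ must vanish after rationalization. An integrality argument then gives triviality already over $\Z[t^{\pm 1}]$, and the desired isometry follows.

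By the base change results of \cref{sec:app}, the hypothesis $[J] \geq_S [K]$ provides a submodule $G \subset H_1(X_J; R)$ with $G \subset G^{\bot}$ such that the pairing induced on $G^{\bot}/G$ by $\Bl_J^R$ is isometric to $\Bl_K^R$ on $H_1(X_K; R)$. Applying \cref{lem:pid}~(iii) yields two short exact sequences; since $R$ is a PID, orders are multiplicative in short exact sequences of finitely generated torsion $R$-modules, so we read off
\[
\Delta_J \doteq \Delta_K \cdot (\ord G) \cdot \overline{\ord G}
\]
in $R$, using that $\ord \ol G \doteq \overline{\ord G}$. The hypothesis $\Delta_J \doteq \Delta_K$ persists after base change from $\Z[t^{\pm 1}]$ to $R$, so $(\ord G) \cdot \overline{\ord G}$ is a unit in $R$; since $R$ is a UFD, $\ord G$ itself must be a unit, and since $R$ is a PID this forces $G = 0$. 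Consequently $G^{\bot} = H_1(X_J; R)$, and $\Bl^R_J$ is isometric to $\Bl^R_K$ over $R$.

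To descend to $\Z[t^{\pm 1}]$, observe that any submodule $G' \subset H_1(X_J; \Z[t^{\pm 1}])$ witnessing $[J] \geq_S [K]$ has image $G' \otimes \Q$ in $H_1(X_J; R)$ which again witnesses $[J] \geq_S [K]$ over $R$, and so vanishes by the above. Thus $G'$ is contained in the $\Z$-torsion of $H_1(X_J; \Z[t^{\pm 1}])$, which is trivial by the classical fact that the Alexander module of a knot with nonzero Alexander polynomial is $\Z$-torsion-free. Therefore $G' = 0$, so $G'^{\bot} = H_1(X_J; \Z[t^{\pm 1}])$, and the given isometry $G'^{\bot}/G' \cong H_1(X_K; \Z[t^{\pm 1}])$ is the desired isometry of $\Bl_J$ and $\Bl_K$. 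The main technical point is this last descent step together with the $\Z$-torsion-freeness input; the rest is a direct application of \cref{lem:pid}~(iii) combined with multiplicativity of orders over a PID.
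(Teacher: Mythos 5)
Your proposal is correct and follows essentially the same route as the paper: rationalize to $\Q[t^{\pm 1}]$ via \cref{prop:appmain2}, use multiplicativity of orders together with \cref{lem:pid} to conclude $\ord(G\otimes\Q)\doteq 1$ and hence that the rational witness vanishes, then descend to $\Z[t^{\pm 1}]$ using $\Z$-torsion-freeness of the Alexander module (\cref{lem:alexztorsionfree}). The only cosmetic difference is that you extract the order relation $\Delta_J \doteq \Delta_K\cdot\ord G\cdot\overline{\ord G}$ from \cref{lem:pid}~(iii), while the paper combines \cref{lem:pid}~(ii) with the short exact sequence $0\to\widetilde G\to\widetilde G^{\bot}\to H_1(X_K;\Q[t^{\pm 1}])\to 0$, which yields the same conclusion.
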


\begin{proof}
By definition of $\geq_S$, there exists a submodule $G\subset H_1(X_J; \Z[t^{\pm 1}])$ such that $G\subset G^{\bot}$
and the pairing induced by $\Bl_J$ on $G^{\bot} / G$ is isometric to $\Bl_K$.
Let $\widetilde G = G \otimes \Q$. By \cref{prop:appmain2}, $\widetilde G^{\bot} / \widetilde G$ is isomorphic to
$H_1(X_K; \Q[t^{\pm 1}])$, so
\[
\ord \widetilde G \cdot \Delta_K  \doteq \ord\widetilde G^{\bot}.
\]
Moreover, by \cref{lem:pid}~(ii), we have
\[
\ord \widetilde G \cdot \ord \widetilde G^{\bot} \doteq \Delta_J.
\]
Since $\Delta_K \doteq \Delta_J$ by assumption, those two equations imply that $\ord \widetilde G \doteq 1$, which means  that $\widetilde G$ is trivial. This is equivalent to $G$ being $\Z$-torsion. However, $H_1(X_J; \Z[t^{\pm 1}])$ is $\Z$-torsion free (see \cref{lem:alexztorsionfree}), and thus its submodule $G$ is as well. Hence $G$ is trivial, and the pairing induced by $\Bl_J$ on $G^{\bot} / G \cong H_1(X_J; \Z[t^{\pm 1}])$, to which $\Bl_K$ is isometric, is just $\Bl_J$ itself.
\end{proof}

\subsection{Dévissage of linking pairings over a PID}\label{section:devissage}
This subsection is devoted to the proof of \cref{cor:sig}.
As will be explained in detail later, $\deg_x, \sigma_x$ and $\eta_x$
can be read from the Blanchfield pairing $\Bl^{\R[t^{\pm 1}]}$ with $\R[t^{\pm 1}]$ coefficients,
which is in turn determined by $\Bl$, as discussed in \cref{sec:app}.
Since $\R[t^{\pm 1}]$ is a PID, the results of \cref{lem:pid} apply.
But this is not enough; to prove \cref{cor:sig},
we need a clearer understanding of linking pairings over $\R[t^{\pm 1}]$.
Localizing this ring will simplify things.
Let us work in a more general setting. Let $R$ be a discrete valuation ring, or DVR for short (i.e.~a commutative PID with a unique nonzero maximal ideal), equipped with an involution $\ol{\,\cdot\,}$, satisfying the two following conditions:
\begin{gather}\label{eq:technical1}
\text{There is a generator $\tau\in R$ of the unique maximal ideal of $R$ with $\tau=\ol{\tau}$}. \\
\label{eq:technical2}
\text{There is a unit $s\in R$ such that $s + \ol{s} = 1$}.
\end{gather}
Here is the key example of such a ring.

\begin{lem}\label{lem:DVR-example}
  Let $\zeta \in \R[t^{\pm 1}]$ be an irreducible polynomial with $\zeta = \ol{\zeta}$. The localization $R:= \R[t^{\pm 1}]_{(\zeta)}$ of $\R[t^{\pm 1}]$ at the ideal $(\zeta)$ is a DVR satisfying \eqref{eq:technical1} and \eqref{eq:technical2} with $\tau = \zeta$ and $s=1/2$.
\end{lem}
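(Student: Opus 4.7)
The plan is to verify in turn that $R$ is a DVR, that the involution extends to $R$ and $\tau := \zeta$ generates the maximal ideal as required by \eqref{eq:technical1}, and that $s := 1/2$ is a unit satisfying \eqref{eq:technical2}.

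First I would note that $\R[t^{\pm 1}]$ is itself a PID, being the localization of the PID $\R[t]$ at the multiplicative set generated by $t$. Since $\zeta$ is irreducible, $(\zeta) \subset \R[t^{\pm 1}]$ is a nonzero prime, and hence maximal, ideal. The localization of a PID at a nonzero prime ideal is standardly a DVR whose unique maximal ideal is the extension of that prime. Hence $R$ is a DVR with maximal ideal $(\zeta) R$.

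Next I would check that the involution $\ol{p(t)} = p(t^{-1})$ on $\R[t^{\pm 1}]$ extends to $R$. Because $\ol{\zeta} = \zeta$, the involution preserves $(\zeta)$ and therefore sends its complement $\R[t^{\pm 1}] \setminus (\zeta)$ to itself. This complement is precisely the multiplicative set used to form $R$, so the involution extends uniquely to an involution on $R$ by the universal property of localization. With this extended involution, $\tau = \zeta$ generates $(\zeta) R$ and satisfies $\ol{\tau} = \tau$, giving \eqref{eq:technical1}.

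Finally, for \eqref{eq:technical2}, the element $2$ is already a unit in $\R \subset \R[t^{\pm 1}]$, so $s = 1/2 \in R$ is a unit. The involution fixes $\R$ pointwise, so $\ol{s} = 1/2$ and $s + \ol{s} = 1$.

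No real obstacle arises; the statement is essentially bookkeeping. The only substantive input, which is standard, is that an involution on a ring preserving a prime ideal automatically extends uniquely to the localization at that ideal.
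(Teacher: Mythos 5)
Your proposal is correct and follows essentially the same route as the paper: the paper's proof simply observes that $(\zeta)$ is prime because $\zeta$ is irreducible, so $R$ is the localization of a PID at a prime ideal and hence a DVR, leaving the verification of \eqref{eq:technical1} and \eqref{eq:technical2} with $\tau=\zeta$, $s=1/2$ implicit. Your additional checks (that the involution preserves the multiplicative set $\R[t^{\pm 1}]\setminus(\zeta)$ and so extends to $R$, and that $1/2$ is a fixed unit) are exactly the routine details the paper omits, so there is nothing to correct.
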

\begin{proof}
Since $\zeta$ is irreducible, the ideal $(\zeta)$ is prime.
Now, $R$ is the localization of a PID at a prime ideal, and as such $R$ is a DVR.
\end{proof}

Let us give some motivation for the conditions \cref{eq:technical1,eq:technical2}.
Symmetric bilinear forms are usually dramatically simplified by imposing the condition that the
ground field has $\text{char} \neq 2$ (or that 2 be invertible in the ground ring).
Condition \cref{eq:technical2} is simply the analogue condition for Hermitian sesquilinear forms.
It is needed below in the proof of \cref{lem:localpid}(iv), and later on to simplify
Hermitian pairings over the field $R / (\tau)$.
Bourbaki consider a similar condition in their discussion of Hermitian forms \cite{Bo59}, 
calling it \emph{Condition (T)}. Regarding \cref{eq:technical1}, note that $(\tau) = \overline{(\tau)}$
follows from $\ol{\,\cdot\,}$ being an automorphism. However, condition \cref{eq:technical1} is a slightly
stronger condition, which is e.g.~not satisfied by the DVR $\R[t^{\pm 1}]_{(t-1)}$.
The condition is only needed in the proof of \cref{lem:localpid}(iv).

Note that up to multiplication with units, $\tau$ is the unique irreducible element of $R$. It follows that every nonzero element in $R$ can be written as $r\tau^\ell$ for some $\ell \in \N$ and $r \in R$ a unit.
For any $R$-torsion module $A$, let $\nu\colon A\to \N$ be the function
sending $a\in A$ to the minimal $k \geq 0$ such that $\tau^k a = 0$.
Note that $\nu a = 0 \Leftrightarrow a = 0$.
The following lemma lays some technical groundwork for pairings over such rings $R$, and shows in particular that they are diagonalizable. By abuse of notation, we denote the functions $\nu\colon A\to \N$ and $\nu\colon Q(R)/R\to\N$
defined above both by the same symbol $\nu$.
\begin{lem}\label{lem:localpid}
Let $R$ be a DVR with involution satisfying \cref{eq:technical1} and \cref{eq:technical2}.
Let $A$ be a finitely generated $R$-torsion module and let
$\lambda\colon A\times A \to Q(R) / R$ be a sesquilinear Hermitian nonsingular pairing.
\begin{enumerate}[label=\emph{(\roman*)}]
\item If $B\subset A$ is a submodule such that $B\cap B^{\bot} = 0$, then $(A, \lambda)$
is isometric to the orthogonal sum of $B$ and $B^{\bot}$.

\item If $a \in A$ and $\nu a = \nu \lambda(a,a)$, then $(A,\lambda)$ is isometric to the orthogonal sum of $\langle a\rangle$ and $\langle a\rangle^{\bot}$.

\item For all $a\in A$, there exists $b\in A$ such that $\nu \lambda(a,b) = \nu a$.

\item There exists $a\in A$ with maximal $\nu a$ such that $\nu\lambda(a,a) = \nu a$.

\item $(A,\lambda)$ is isometric to an orthogonal sum $\langle e_1\rangle \oplus \cdots\oplus \langle e_n\rangle$ of cyclic modules. For any such $e_1, \ldots, e_n$, we have $\nu\lambda(e_i,e_i) = \nu e_i$ for all $i\in\{1,\ldots,n\}$.

\item Let $a\in A$ with $\nu a = 1$, and let $\tilde a\in A$ be an element with maximal $\nu \tilde a$ such that $a = \tau^{-1+\nu \tilde a}\tilde a$.
Then there exists $b \in A$ such that $\nu b = \nu \tilde a = \nu \lambda(\tilde a,b) = \nu \lambda(b,b)$.

\item
Let $a\in A$ and assume that $\nu \lambda(a,b) < \nu a$ for all $b \in A$ with $\nu b = \nu a$.
Then there exists $c\in A$ with $\nu c > \nu a$ such that $\nu (a + \tau c) < \nu a$.
\end{enumerate}
\end{lem}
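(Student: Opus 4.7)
My plan is to prove parts (i)--(vii) in order. Parts (i)--(iii) follow quickly from the preceding Lemma~\ref{lem:pid} and the nonsingularity of $\lambda$. Part (iv), producing a self-paired element of maximal $\nu$, is the main technical step and is where conditions \cref{eq:technical1,eq:technical2} enter decisively. Parts (v)--(vii) then combine (iv) with an inductive orthogonal diagonalization and careful bookkeeping.

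For (i), the multiplicativity $\ord B\cdot\ord B^\bot\doteq\ord A$ together with $B\cap B^\bot=0$ forces $B\oplus B^\bot$ to have the same order as $A$, hence to equal $A$; orthogonality of the decomposition is built in. Part (ii) is (i) applied with $B=\langle a\rangle$: triviality of $\langle a\rangle\cap\langle a\rangle^\bot$ is immediate from $\nu\lambda(a,a)=\nu a$, since if $r\lambda(a,a)=0$ in $Q(R)/R$ then $\tau^{\nu a}\mid r$, whence $ra=0$. Part (iii) follows from the fact that $\lambda(a,-)\in\ohom(A,Q(R)/R)$ is the image of $a$ under the adjoint isomorphism, hence has annihilator exactly $(\tau^{\nu a})$, so some $b$ realizes $\nu\lambda(a,b)=\nu a$.

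For (iv), let $k=\max_x\nu x$ and take $a$ with $\nu a=k$; we may assume $\nu\lambda(a,a)<k$. By (iii), choose $b$ with $\nu\lambda(a,b)=k$, and write $\lambda(a,b)=u/\tau^k$ for a unit $u\in R$ (using \cref{eq:technical1} so that $\tau$ is a self-conjugate uniformizer). If $\nu\lambda(b,b)=k$, then $\nu b=k$ and $b$ itself is a self-paired generator. Otherwise, using the unit $s$ from \cref{eq:technical2}, set $\mu:=\bar u^{-1}\bar s$. A short computation gives $\bar\mu\lambda(a,b)+\mu\lambda(b,a)=(s+\bar s)/\tau^k=1/\tau^k$, so
\[
\lambda(a+\mu b,\,a+\mu b)=\lambda(a,a)+\tfrac{1}{\tau^k}+\mu\bar\mu\,\lambda(b,b)
\]
has $\nu$ equal to $k$, which in turn forces $\nu(a+\mu b)=k$. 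Part (v) follows from (iv) by induction on $\ord A$: (iv) gives a self-paired $a$ of maximal $\nu$, (ii) splits off $\langle a\rangle$ orthogonally, and $\langle a\rangle^\bot$ has strictly smaller order. The second assertion in (v) is automatic, since nonsingularity of $\lambda$ restricted to each cyclic summand $\langle e_i\rangle$ forces $\lambda(e_i,e_i)$ to have the same $\nu$ as $e_i$.

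For (vi), decompose $A=\bigoplus\langle e_i\rangle$ via (v), and write $\tilde a=\sum r_ie_i$ with $k_i=\nu e_i$ and $k=\nu\tilde a$. I claim there is some $j$ with $k_j=k$ and $r_j$ a unit; then $b:=e_j$ works, since $\lambda(\tilde a,e_j)=r_j\lambda(e_j,e_j)$ has $\nu=k$. Otherwise $\tau\mid r_i$ for every $i$ with $k_i\ge k$ (either by the contrary assumption when $k_i=k$, or because $\nu(r_ie_i)\le k$ forces $\nu_\tau(r_i)\ge k_i-k\ge 1$ when $k_i>k$). Summands with $k_i<k$ are killed by $\tau^{k-1}$, so $a=\tau^{k-1}\tilde a=\tau^k\tilde a^\circ$ where $\tilde a^\circ:=\sum_{k_i\ge k}(r_i/\tau)e_i$, and $\nu\tilde a^\circ=k+1$, contradicting maximality of $\nu\tilde a$. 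For (vii), expand $a=\sum r_ie_i$ with $k=\nu a$; applying the hypothesis to $b=e_i$ whenever $k_i=k$ yields $\tau\mid r_i$ in that range, while $\nu(r_ie_i)\le k_i<k$ when $k_i<k$, so the maximum $\nu(r_ie_i)=k$ is attained at some $i_0$ with $k_{i_0}>k$ and $r_{i_0}=\tau^{k_{i_0}-k}\cdot\text{unit}$. Taking $c:=-\sum_{k_i>k}(r_i/\tau)e_i$ then gives $\nu c=k+1$ (attained at $i_0$) and $a+\tau c=\sum_{k_i\le k}r_ie_i$, whose $\nu$ is at most $k-1$.
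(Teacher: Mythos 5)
Your proof is correct and follows essentially the same route as the paper's: order multiplicativity and nonsingularity for (i)--(iii), the key element $a+\mu b$ built from the unit with $s+\bar s=1$ (and $\bar\tau=\tau$) for (iv), induction via (ii) and (iv) for (v), and coordinate computations in the orthogonal cyclic decomposition for (vi)--(vii). The only step stated more tersely than in the paper is the second assertion of (v), where you invoke (correctly, but without proof) the standard fact that a nonsingular pairing restricts to a nonsingular pairing on each summand of an orthogonal decomposition, which is exactly what the paper's explicit argument there establishes.
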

\begin{proof}
(i) The sum of the inclusion maps gives a map
$B \oplus B^{\bot} \to A$
that clearly preserves pairings. It just remains to see that this map is bijective.
Injectivity follows from $B \cap B^{\bot} = \{0\}$.
Surjectivity follows since $\ord B \oplus B^{\bot} \doteq \ord A$ by \cref{lem:pid}~(ii), using that $R$ is a PID.
\smallskip

(ii) By (i), we just have to check
that $\langle a\rangle \cap \langle a\rangle^{\bot} = \{0\}$.
Take an element $ra \in \langle a\rangle$ with $r\in R$. If $ra\in\langle a\rangle^{\bot}$,
then $\lambda(ra, a) = 0$, and so $r\lambda(a,a) = 0$. Hence $r \in (\tau^{\nu\lambda(a,a)})
= (\tau^{\nu a})$, and thus $ra = 0$.
\smallskip

(iii) If $a = 0$, take $b = 0$. If $a \neq 0$, let $\tilde a = \tau^{-1 + \nu a} a$.
Since $\lambda$ is nonsingular, there exists $b\in A$ such that $\lambda(\tilde a, b) \neq 0$.
It follows that $\nu \lambda(\tilde a, b) = 1$ and $\nu \lambda(a,b) = \nu a$ as desired.
\smallskip

(iv) Pick any $b\in A$ of maximal $\nu b = k$.
If $\nu\lambda(b,b) = k$, just take $a = b$.
Else, by (iii) we may pick $c\in A$ such that $\nu \lambda(b,c) = k$, i.e.\ $\lambda(b,c) = r\tau^{-k}$ for $r\in R$ a unit.
Note this implies $\nu c = k$.
Again, if $\nu\lambda(c,c) = k$, just set $a = c$.
Else we have $\nu\lambda(b,b) < k$ and $\nu\lambda(c,c) < k$.
Pick $s\in R$ such that $s + \ol{s} = 1$ and set
$a = b + s\ol{r}^{-1}c$. This element $a$ satisfies $\nu \lambda(a,a) = k$, as one computes:
\begin{multline*}
\tau^{k-1}\lambda(b + s\ol{r}^{-1}c,b + s\ol{r}^{-1}c)  =
\tau^{k-1}(\lambda(b, s\ol{r}^{-1}c) + \lambda(s\ol{r}^{-1}c, b))
= \tau^{k-1}(\ol{s}\tau^{-k} + s\ol{\tau}^{-k}) = \tau^{-1}.
\end{multline*}

(v) Note that $\ord A \doteq \tau^j$ for some $j\geq 0$. The proof goes by induction over $j$.
For $j = 0$, $A$ is trivial and the statement holds.
If $A$ is nontrivial, by (iv), there exist a nontrivial $a\in A$ with maximal $\nu a$ and $\nu \lambda (a,a) = \nu a$.
By (ii), $A$ is isometric to $\langle a\rangle \oplus \langle a\rangle^{\bot}$, and by induction
$\langle a\rangle^{\bot}$ is isometric to an orthogonal sum of cyclic modules.

For the second statement, note that $\tau^{\nu e_i}\lambda(e_i, e_i) = 0$, so $\nu \lambda(e_i, e_i) \leq \nu e_i$.
Moreover, $\lambda$ is nonsingular, so there exists some $a\in A$ such that $\lambda(\tau^{\nu e_i - 1} e_i, a) \neq 0$.
Because of the orthogonality of the sum, we have $a = a_i e_i + b$ with $a_i\in R\setminus\{0\}$
and $b\in \langle e_1, \ldots, e_{i-1}, e_{i+1}, \ldots, e_n\rangle$. Then
$\lambda(\tau^{\nu e_i - 1} e_i, a) = a_i \tau^{\nu e_i - 1} \lambda(e_i, e_i) \neq 0$,
and so $\nu \lambda(e_i, e_i) \geq \nu e_i$.
\smallskip

(vi) Using (v), we may identify $(A, \lambda)$ with the orthogonal sum over cyclic modules with generators $e_1, \ldots, e_n$, so we write elements of $A$ as vectors with respect to that generator set.
In this notation, the $i$-th coordinate of $a$ is $r_i \tau^{-1+\nu e_i}$ for $r_i \in R$ a unit or 0.
Since $\nu a=1$, $a \neq 0$, and by maximality of $\nu \tilde{a}$, there exists $i$ such that $r_i \neq 0$ and $\nu e_i = \nu \tilde a$.
This implies that the $i$-th coordinate of $\tilde a$ is $r_i$.
Set $b$ to be $e_i$. Then $\nu b = \nu \tilde a$ and by the second part of (v), $\nu \lambda(e_i,e_i) = \nu e_i$, so
$\lambda(\tilde a,b) = \lambda(r_i e_i, e_i)$, where $\nu \lambda(r_i e_i, e_i) = \nu e_i$.
Likewise $\nu \lambda(b,b) = \nu \lambda(e_i, e_i) = \nu e_i$.%
\smallskip

(vii) Choose $e_1, \ldots, e_n$ as in (vi).
Let the $i$-th coordinate of $a$ be $r_i \tau^{k_i}$ with $r_i\in R$ a unit or $0$, and $k_i \geq 0$.
Reorder the $e_i$ such that there exists $j\in \{1, \ldots, n\}$ with
$\nu(r_i \tau^{k_i} e_i) = \nu(a)$ for $i \leq j$ and $\nu(r_i \tau^{k_i} e_i) < \nu(a)$ for $i > j$.
If $k_i = 0$ for some $i\in \{1,\ldots, j\}$, then
$\nu(a) = \nu(e_i) = \nu \lambda(e_i, a)$, contradicting the assumption that no such $b = e_i$ exists.
So $k_i \geq 1$ for all $i\in \{1,\ldots, j\}$.
Set
\[
c = -\sum_{i=1}^j r_i \tau^{k_i - 1} e_i.
\]
Then, clearly, $\nu c = 1 + \nu a$ and $\nu (a + \tau c) < \nu a$.
\end{proof}

Let $A$ be a finitely generated $R$-torsion module and
let $\lambda\colon A\times A \to Q(R) / R$ be a sesquilinear Hermitian nonsingular pairing.
By a technique known as \emph{dévissage} (see e.g.~\cite[Ch.~38]{Ra98}), the isometry type of $\lambda$ can be understood
by considering the isometry type of sesquilinear Hermitian nonsingular pairings over the field $\mathbb{F}\coloneqq R/(\tau)$. Let us make this precise now.

Let $A_k = \{a\in A\mid \tau^k a = 0\}$ for $k\geq 0$ and $A_k = \{0\}$ for $k < 0$.
Let
\[
\Phi_k(A) = A_k/(A_{k - 1} + \tau A_{k+1}).
\]
Note that $\lambda$ induces a sesquilinear Hermitian form $\Phi_k(\lambda)\colon \Phi_k(A) \times \Phi_k(A) \to (\tau^{-k} R) / (\tau^{-k+1} R)$, and $(\tau^{-k} R) / (\tau^{-k+1} R)$ is canonically identified with the field $\mathbb{F} = R/(\tau)$ by multiplication with $\tau^k$.
Nonsingularity of the form $\Phi_k(\lambda)$ follows from \cref{lem:localpid}~(vii).

Write $\mathfrak{T}(R)$ for the set of isometry classes of sesquilinear Hermitian nonsingular pairings $A\times A \to Q(R)/R$ on finitely generated $R$-torsion modules $A$, and $\mathfrak{M}(\mathbb{F})$ for the set of isometry classes of sesquilinear Hermitian nonsingular pairings $V\times V\to \mathbb{F}$ on finite dimensional vector fields $V$ over the field $R/(\tau)$. The orthogonal sum of pairings, which we denote by $\oplus$, makes both $\mathfrak{T}(R)$ and $\mathfrak{M}(\mathbb{F})$ into commutative monoids.
We may now see the $\Phi_i$ for $i\geq 1$ as a family of monoid homomorphisms $\mathfrak{T}(R) \to \mathfrak{M}(\mathbb{F})$ (abusing notation, let $\Phi_k(\lambda)$ denote both the concrete form induced by $\lambda$, as well as the isometry type of that form).

The monoid $\mathfrak{M}(\mathbb{F})$ enjoys some special properties,
due to condition \cref{eq:technical2} \cite[\S 4]{Bo59}.
Firstly, $\mathfrak{M}(\mathbb{F})$ is cancellative, i.e.\ $[\lambda] + [\mu_1] = [\lambda] + [\mu_2] \Rightarrow [\mu_1] = [\mu_2]$ for all $[\lambda],[\mu_1],[\mu_2] \in \mathfrak{M}(\mathbb{F})$ (this is the sesquilinear version of Witt's cancellation theorem).
Secondly, metabolic forms (defined as forms $V\times V\to\mathbb{F}$ admitting a metabolizer, i.e.~a subspace $P\subset V$ with $P^{\bot} = P$) are fully characterized by their dimension, and so
every metabolic form is a multiple of the
isometry class $H\in \mathfrak{M}(\mathbb{F})$ of the \emph{hyperbolic plane}, which is the pairing on
$\mathbb{F}^2$ sending $(e_1, e_1)$, $(e_2,e_2)$ to $0$ and $(e_1, e_2)$, $(e_2, e_1)$ to $1$.

The following lemma forms the core of the proof of \cref{cor:sig}.
\begin{lem}\label{lem:devissage}
Let $R$ be a DVR with involution satisfying \cref{eq:technical1} and \cref{eq:technical2}, and let  $A$ be a finitely generated $R$-torsion module with a sesquilinear Hermitian nonsingular pairing $\lambda$.
Let $G\subset A$ be a submodule such that $G\subset G^{\bot}$, and let $\lambda'$ be the pairing induced on $G^{\bot}/G$ by $\lambda$. Then there are $T_1, T_2, \ldots \in \mathfrak{M}(\mathbb{F})$ and $h_1, h_2, \ldots \in \N$
such that $T_1$ is trivial and for all $n\geq 1$, we have $h_{n} \geq h_{n+2}$ and
\[
T_n \oplus h_n \cdot H \oplus \bigoplus_{i=0}^{\infty} \Phi_{n+2i}(\lambda') = h_{n+1}\cdot H \oplus \bigoplus_{i=0}^{\infty} \Phi_{n+2i}(\lambda).
\]
\end{lem}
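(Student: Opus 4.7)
My plan is to prove the lemma by induction on the order $|A|$ of $A$ as an $R$-module. When $G = 0$, we have $G^{\bot} = A$ and $\lambda' = \lambda$, so the conclusion holds with all $h_n = 0$ and all $T_n$ zero. Otherwise, I would split off an orthogonal rank-two hyperbolic plane with one generator lying in $G$, and then apply induction to the orthogonal complement.

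To construct the splitting, pick $g \in G$ of maximal $\nu g = k$; since $G \subset G^{\bot}$, we have $\lambda(g, g) = 0$. \cref{lem:localpid}(iii) provides $b \in A$ with $\nu \lambda(g, b) = k$, and after rescaling by a unit we may assume $\lambda(g, b) = 1/\tau^k$, which forces $\nu b = k$. Writing $\lambda(b, b) = r/\tau^k$, Hermicity combined with~\eqref{eq:technical2} allows us to choose a representative with $r = \ol r$ (via $r \mapsto sr + \ol{s}\, \ol r$ with $s + \ol s = 1$). I then set $b' \coloneqq b - sr\, g$; a short computation using $\ol \tau = \tau$ from~\eqref{eq:technical1} and $\lambda(g, g) = 0$ gives $\lambda(b', b') = 0$, while $\lambda(g, b') = 1/\tau^k$ and $\nu b' = k$ are preserved. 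Thus $\langle g, b'\rangle$ is isometric to the rank-two hyperbolic plane over $R/\tau^k$ and intersects its orthogonal complement trivially, so \cref{lem:localpid}(i) yields an orthogonal splitting $A = \langle g, b'\rangle \oplus A''$ with $A'' \coloneqq \langle g, b'\rangle^{\bot}$. Setting $G'' \coloneqq G \cap A''$, the identity $\lambda(g, g_1) = 0$ for $g_1 \in G$ forces the $b'$-component of $g_1$ to vanish, so $G = \langle g\rangle \oplus G''$; a similar argument yields $G^{\bot} = \langle g\rangle \oplus (G'')^{\bot_{A''}}$, whence a canonical isometry $(G^{\bot}/G, \lambda') \cong ((G'')^{\bot_{A''}}/G'', \lambda'')$ where $\lambda''$ is the pairing induced on $A''$ by the sublagrangian $G''$.

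Applying the inductive hypothesis to $(A'', \lambda|_{A''}, G'')$, which has strictly smaller order, produces sequences $(T_n'')$, $(h_n'')$ satisfying the lemma for $\lambda''$. Since $\Phi_n(\lambda') = \Phi_n(\lambda'')$ for every $n$ and $\Phi_n(\lambda) = \Phi_n(\lambda|_{A''}) \oplus \delta_{n,k} H$, the new hyperbolic plane only perturbs the target equation at indices of the same parity as~$k$. When $k$ is odd, I would set $h_n \coloneqq h_n'' + 1$ for odd $n \leq k$ (with $h_n \coloneqq h_n''$ otherwise), $T_n \coloneqq T_n'' \oplus H$ for even $n < k$, and $T_n \coloneqq T_n''$ elsewhere; when $k$ is even, I would set $h_n \coloneqq h_n''$ throughout, $T_n \coloneqq T_n'' \oplus H$ for even $n \leq k$, and $T_n \coloneqq T_n''$ for odd $n$. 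A direct check then verifies the target equation in each degree using cancellativity of $\mathfrak{M}(\mathbb{F})$; the monotonicity $h_n \geq h_{n+2}$ follows from the inductive inequality together with the bound $h_k'' + 1 \geq h_{k+2}''$ at the top of the odd $k$ case; and $T_1$ remains trivial in both parity cases because the $T$-modification never affects index~$1$. The main technical obstacle I anticipate is precisely this parity-dependent bookkeeping, namely the distinct treatment of odd and even $k$ required to keep $T_1$ trivial while preserving monotonicity; the construction of $b'$ is by contrast largely mechanical once conditions~\eqref{eq:technical1} and~\eqref{eq:technical2} on $R$ are invoked.
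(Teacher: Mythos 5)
Your overall strategy---split off an orthogonal summand meeting $G$ and induct on the size of $A$---has the same shape as the paper's argument, but the specific splitting you propose does not exist in general, and this is a genuine gap rather than a bookkeeping issue. The failure is at ``after rescaling by a unit we may assume $\lambda(g,b)=1/\tau^k$, which forces $\nu b = k$'': from $\nu\lambda(g,b)=\nu g=k$ one only gets $\nu b\geq k$, not equality, and when the maximal-order element $g$ of $G$ is divisible by $\tau$ inside $A$ there may be no $b$ of order $\tau^k$ pairing nontrivially with $g$ at all. Concretely, take $A=R/(\tau^2)$ generated by $e$ with $\lambda(e,e)=\tau^{-2}$ and $G=\langle \tau e\rangle$. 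Then $G=G^{\bot}$, so $\lambda'$ is trivial; but $A$ is indecomposable, so no rank-two summand $\langle g,b'\rangle$ containing $g=\tau e$ can split off: every $b$ with $\nu\lambda(g,b)=1$ has $\nu b=2$, the expression $\lambda(b,b)=r/\tau^{k}$ with $r\in R$ is already impossible since $\nu\lambda(b,b)=2>k=1$, and your element $b'=b-sr g$ satisfies $\lambda(b',b')=\tau^{-2}-r\tau^{-1}\neq 0$, so it is not isotropic. Worse, in this example the conclusion of the lemma forces $T_2$ to be odd-dimensional (compare dimensions in the $n=2$ identity: here $\Phi_n(\lambda')=0$ for all $n$ and $\bigoplus_i\Phi_{2+2i}(\lambda)=\Phi_2(\lambda)$ is one-dimensional, so $\dim T_2 + 2h_2 = 2h_3+1$), whereas your recipe only ever adds copies of $H$ to the $T_n$ supplied by the induction, starting from trivial $T_n$ in the base case. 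So your construction could only ever output $T_n$ that are sums of hyperbolic planes; it would ``prove'' a strengthening of the lemma that is actually false, not merely miss a case.

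This divisibility phenomenon is exactly what the paper's proof is organized around. There one first reduces, via transitivity of the relation $\geq_{\mathfrak M}$ and induction on $\ord G$, to the case $G=\langle g\rangle$ with $\nu g=1$; then one replaces $g$ by a maximal root $\tilde g$ with $g=\tau^{k-1}\tilde g$ and distinguishes two cases. If $\nu\lambda(\tilde g,\tilde g)=k$, one splits off the \emph{cyclic} summand $B=\langle\tilde g\rangle$, and the comparison of $\Phi_k(\lambda|_{B\times B})$ with $\Phi_{k-2}$ of the induced pairing produces a generally non-metabolic one-dimensional contribution $T_k$ at an index shifted by two---this is where the nontrivial $T_n$ in the statement come from (my example above is the case $k=2$). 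Only in the complementary case, handled via \cref{lem:localpid}(vi), does one split off a two-generator summand $\langle\tilde g,b\rangle$ whose $\Phi_k$ is $H$, which is the situation your argument implicitly assumes always holds. To repair your proof you would need to build in this dichotomy (maximal roots of $g$, and the cyclic versus two-generator cases), and allow non-hyperbolic corrections $T_n$; the parity bookkeeping you describe is then essentially the easy part.
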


\begin{proof}
Throughout this proof, let us write $\lambda \geq_{\mathfrak M} \lambda'$ if $T_n$ and $h_n$ as above exist.
It is straightforward to check that the relation $\geq_{\mathfrak M}$ on $\mathfrak{M}(\mathbb{F})$ is transitive.

Now, $\lambda \geq_{\mathfrak M} \lambda'$ holds for trivial~$G$.
The main part of the proof is to show the statement for $\ord G \doteq \tau$.
Having established that, let $\ord G \doteq\tau^j$ with $j\geq 2$, and proceed by induction over $j$ as follows.
Pick $g\in G$ with $\nu g = 1$.
 Let $\mu$ be the pairing induced on $\langle g\rangle^{\bot} / \langle g\rangle$ by $\lambda$.
Note that the pairing induced on $(G / \langle g\rangle)^{\bot} / (G / \langle g\rangle)$ by $\mu$ is isometric to $\lambda'$. Since $\ord\langle g\rangle = \tau$ and $\ord G / \langle g\rangle = \tau^{j-1}$, we may apply the induction hypothesis twice, and get
$\lambda \geq_{\mathfrak M} \mu \geq_{\mathfrak M} \lambda'$.

So let us now deal with the case that $\ord G \doteq \tau$, i.e.\ $G = \langle g\rangle$ with $g\in \Phi_1(A)\setminus\{0\}$.
Let $\tilde g \in A$ have maximal $\nu \tilde g \eqqcolon k$ such that $g = \tau^{k-1}\tilde g$.
We will distinguish two cases, depending on whether
\begin{equation}\label{eq:case}
\nu \lambda(\tilde g, \tilde g) = k
\end{equation}
holds. If it does, set $B = \langle\tilde g\rangle$.
Otherwise, we have $\nu \lambda(\tilde g, \tilde g) < k$ and pick $b\in A$ as in \cref{lem:localpid}(vi), i.e.
\begin{equation}\label{eq:ordereq}
\nu b = \nu \lambda(\tilde g, b) = \nu \lambda(b,b) = k
\end{equation}
and set $B = \langle \tilde g, b\rangle\subseteq A$.
Let us check that $B \cap B^{\bot} = \{0\}$. In case \cref{eq:case} holds, this is true by \cref{lem:localpid}~(i).
If \cref{eq:case} does not hold, let $a \in B \cap B^{\bot}$ be given. Since $a\in B$, we have
$a = r \tau^{\ell} \tilde g + s \tau^m b$ for some units $r,s \in R$ and $\ell, m\geq 0$.
We want to show $a = 0$. For that, it is sufficient that $\ell \geq k$ and $m\geq k$, since both $\tilde g$ and $b$ are annihilated by $\tau^k$.
We have $\lambda(a, b) = 0$, since $a \in B^{\bot}$ and $b \in B$, and thus
\[
r \tau^{\ell} \lambda(\tilde g, b) + s \tau^m \lambda(b, b) = 0.
\]
If both summands are zero, then $\ell \geq k$ and $m \geq k$ using \cref{eq:ordereq}, and so we are done.
Otherwise, the two summands must have the same $\nu$, and so $k - \ell = k - m$, implying that $\ell = m$.
Now $\lambda(a, \tilde g) = 0$ implies that
\[
r \tau^{\ell} \lambda(\tilde g, \tilde g) + s \tau^m \lambda(b, \tilde g) = 0.
\]
If the second summand is zero, then $\ell \geq k$, and we are done.
Otherwise, the two summands must have the same $\nu$, i.e.~$\nu \lambda(\tilde g, \tilde g) - \ell = k - \ell$,
implying $\nu\lambda(\tilde g, \tilde g) = k$ contradicting the assumption that \cref{eq:case} does not hold.
We have thus shown $B \cap B^{\bot} = \{0\}$.

By \cref{lem:localpid}~(i), it follows that $A$ is isometric to $B \oplus B^{\bot}$.
Since $\langle g\rangle \subset B$, we have $B^{\bot} \subset \langle g\rangle^{\bot}$.
Thus $\langle g\rangle^{\bot} / \langle g\rangle$ is isometric to $C \oplus B^{\bot}$
where $C = (B \cap \langle g\rangle^{\bot}) / \langle g\rangle$
with the pairing induced by $\lambda$.
Hence it suffices to show $\lambda|_{B\times B} \geq_{\mathfrak{M}} \lambda|_{C\times C}$.

If \cref{eq:case} holds, then $B$ is cyclically generated by $\tilde g$ with $\nu\tilde g = k$,
where $k\geq 2$ follows.
So, $\Phi_n(B)$ is trivial for $n \neq k$, and $\Phi_k(\lambda_{B\times B})$ is isometric to the pairing on $\mathbb{F}$ sending $(e_1,e_1)\mapsto \tau^k\lambda(\tilde g, \tilde g) \in R / (\tau) = \mathbb{F}$.
Observe that $C$ is also cyclic, generated by $a = [\tau\tilde g]$, with $\nu a = k - 2$ and
$\lambda(a,a) = \tau^2 \lambda(\tilde g, \tilde g)$.
So $\Phi_n(C)$ is trivial for all $n\geq k-2$; and $\Phi_{k-2}(\lambda_{C\times C}) = \Phi_k(\lambda_{B\times B})$ if $k > 2$ (if $k = 2$, then $C$ is trivial). To demonstrate $\lambda|_{B\times B} \geq_{\mathfrak{M}} \lambda|_{C\times C}$,
one may take $h_n = 0$ for all $n$, $T_n = 0$ for all $n\neq k$,
and $T_k = \Phi_k(\lambda_{B\times B})$.

If \cref{eq:case} does not hold, recall that $B = \langle \tilde g, b\rangle$.
So $\Phi_n(B)$ is trivial for $n\neq k$ and $\Phi_k(B)$ is two-dimensional, with basis $[\tilde g], [b]$.
The pairing $\Phi_k(\lambda_{B\times B})$ is metabolic, since $([\tilde g], [\tilde g])$ is sent to $0$.
So $\Phi_k(\lambda_{B\times B}) = H$.
One calculates that $\langle g\rangle^{\bot} \cap B = \langle \tilde g, \tau b\rangle$,
so $\Phi_n(C)$ is trivial for $n \geq k - 1$; and $\Phi_{k-1}(\lambda_{C\times C}) = H$ if $k > 1$ (if $k = 1$, then $C$ is trivial).
To demonstrate $\lambda|_{B\times B} \geq_{\mathfrak{M}} \lambda|_{C\times C}$,
one may take $T_n = 0$ for all $n$, $h_n = 1$ if $n \leq k$ and $n \equiv k \pmod{2}$,
and $h_n = 0$ otherwise.
\end{proof}

We are now ready to prove \cref{cor:sig}.

\begin{proof}[Proof of \cref{cor:sig}]
If $e^{\pi i x}$ is not the root of the Alexander polynomial of any knot,
then $\sigma_x$ is a topological concordance invariant.
For such $x$, the stated inequalities thus simply read $0\geq 0\geq 0$.
So from now on, we may assume that $e^{\pi ix}$ is the root of some Alexander polynomial.
In fact, it will be sufficient to assume $e^{\pi i x}\not\in \R$ (equivalently, that $x\not\in \Z$).
Let $\zeta(t) = t^{-1} - 2\cos x + t$.
The localization $\R[t^{\pm 1}]_{(\zeta)}$ of $\R[t^{\pm 1}]$ at $(\zeta)$ is a DVR satisfying conditions \cref{eq:technical1} and \cref{eq:technical2} by \cref{lem:DVR-example}.

Note that $\R[t^{\pm 1}] / (\zeta)$ is isomorphic to $\mathbb{C}$ via the isomorphism sending $t$ to $e^{\pi ix}$.
So $\mathfrak{M}(\R[t^{\pm 1}]/(\zeta))$ is isomorphic to the monoid $\N^2$ via the signature.
Indeed, send $[\mu]\in \mathfrak{M}(\R[t^{\pm 1}]/(\zeta))$ to $(\mu_1, \mu_{-1}) \in \N^2$,
where $\mu_1$ and $\mu_{-1}$
are the dimensions of maximal subspaces on which $\mu$ is positive definite and negative definite, respectively.
Let us write $\sigma(\mu) = \mu_1 - \mu_{-1}$.

For a knot $L$, let us abbreviate $\Psi_i(L) \coloneqq \Phi_i(\Bl^{\R[t^{\pm 1}]_{(\zeta)}}_L)$.
Denote the difference between the signature and the averaged signature at~$e^{\pi ix}$ by $d_x(L)$:
\[
d_x(L) = \sigma_x(L) - \tfrac12\lim_{\varepsilon\to 0} \sigma_{x+\varepsilon}(L) + \sigma_{x-\varepsilon}(L).
\]
Note that $d_x(J) - d_x(K) = \sigma_x(J) - \sigma_x(K)$, since $\sigma_{x\pm\varepsilon}(J) = \sigma_{x\pm\varepsilon}(K)$
for almost all~$\varepsilon$.
One may read $\deg_x(L), \eta_x(L)$, and $d_x(L)$ from $\Psi_i(L)$
(see \cite{Lev89}, where the first and second equations are implicit,
and the third equation is given in Theorem~2.3):
\[
\deg_x(L) = \sum_{i=1}^\infty i\cdot \dim \Psi_i(L),\quad
\eta_x(L) = \sum_{i=1}^\infty \dim \Psi_i(L),\quad
d_x(L) = -\sum_{i=1}^\infty \sigma(\Psi_{2i}(L)).
\]
For the two knots $J, K$ with $[J]\geq_S [K]$, \cref{prop:appmain}
yields a submodule $\tilde G \subset H_1(X_J; \R[t^{\pm 1}]_{(\zeta)})$ such that the pairing induced on $\tilde G^{\bot} / \tilde G$ by $\Bl^{\R[t^{\pm 1}]_{(\zeta)}}_J$ is isometric to $\Bl^{\R[t^{\pm 1}]_{(\zeta)}}_K$. By \cref{lem:devissage}, this implies that for all $n\geq 1$,
there are $T_n\in \mathfrak{M}(\mathbb{F})$ and $h_n\in \N$ such that $T_1$ is trivial and $h_{n} \geq h_{n+2}$, and
\[
T_n \oplus h_n \cdot H \oplus \bigoplus_{i=0}^{\infty} \Psi_{n+2i}(K) = h_{n+1}\cdot H \oplus \bigoplus_{i=0}^{\infty} \Psi_{n+2i}(J).
\]
For $n = 2$, this implies
\begin{equation}\label{eq:t2}
\sigma(T_2) + \sum_{i=1}^{\infty} \sigma(\Psi_{2i}(K)) = \sum_{i=1}^{\infty} \sigma(\Psi_{2i}(J)).
\end{equation}
It also implies
\begin{equation}\label{eq:tn}
T_n \oplus T_{n+1} \oplus (h_n - h_{n+2})\cdot H \oplus \bigoplus_{i=0}^{\infty} \Psi_{n+i}(K) = \bigoplus_{i=0}^{\infty} \Psi_{n + i}(J)
\end{equation}
for all $n\geq 1$.
It follows that
\begin{multline*}
|\sigma_x(J) - \sigma_x(K)| =
|d_x(J) - d_x(K)|  = \Bigl|\sum_{i=1}^{\infty} \sigma(\Psi_{2i}(J)) - \sigma(\Psi_{2i}(K))\Bigr|
  \stackrel{\cref{eq:t2}}{=} |\sigma(T_2)| \\ \leq \dim(T_2)
  \leq \dim(T_2 + (h_1 - h_3)\cdot H)
  \stackrel{\cref{eq:tn}}{=} \sum_{i=1}^{\infty} \dim \Psi_i(J) - \dim \Psi_i(K) = \eta_x(J) - \eta_x(K),
\end{multline*}
proving the second desired inequality. To prove the first inequality:
\begin{align*}
\bigoplus_{n=2}^{\infty}\Bigl(
T_n \oplus T_{n+1} \oplus (h_n - h_{n+2})\cdot H \oplus \bigoplus_{i=0}^{\infty} \Psi_{n+i}(K)\Bigr) & \stackrel{\cref{eq:tn}}{=} \bigoplus_{n=2}^{\infty}\bigoplus_{i=0}^{\infty} \Psi_{n + i}(J) \\
\Rightarrow\quad (h_2 + h_3)\cdot H \oplus T_2 \oplus \bigoplus_{i=3}^{\infty}2T_i \oplus \bigoplus_{i=1}^{\infty} (i - 1)\Psi_{i}(K) & = \bigoplus_{i=1}^{\infty} (i - 1)\Psi_{i}(J) \\
\Rightarrow\quad \sum_{i=1}^{\infty} (i - 1)\dim \Psi_{i}(K) & \leq \sum_{i=1}^{\infty} (i - 1)\dim\Psi_{i}(J) \\[1.5ex]
\Rightarrow\quad \eta_x(J) - \eta_x(K) & \leq \deg_x(J) - \deg_x(K).
\qedhere
\end{align*}
\end{proof}

\section{Injections and surjections of twisted Alexander modules} \label{sec_TAM}

We provide an obstruction for the homotopy ribbon concordance in terms of twisted Alexander modules, which generalizes the results in \cite{FP19} on the classical Alexander module.

Let $J$ and $K$ be oriented knots in $S^3$ with $J \geq_{\top} K$, and let $C$ be a homotopy ribbon concordance from $J$ to $K$.
As in the proof of \cref{thm_blanchfield},
we denote the complement of an open tubular neighborhood of $C$ in $S^3 \times [0, 1]$ by $X_C$, and set $\pi_C = \pi_1(X_C)$.
We think of $X_J$ and $X_K$ naturally as subspaces of $\partial X_C$.
We write $\iota_J \colon \pi_J \to \pi_C$ and $\iota_K \colon \pi_K \to \pi_C$ for the induced epimorphism and the induced monomorphism respectively.
Let $\phi_C \colon \pi_C \to \Z$ be the abelianization map satisfying $\phi_J = \phi_C \circ \iota_J$ and $\phi_K = \phi_C \circ \iota_K$.

From here on out, let $R$ be a Noetherian UFD with (possibly trivial) involution $\ol{\,\cdot\,} \colon R \to R$, and $Q(R)$ is its quotient field.

\subsection{Surjections of twisted Alexander modules}

First we prove the following proposition, which generalizes \cite[Proposition 3.1]{FP19}.

\begin{prop} \label{prop_surjection}
Let $C$ be a homotopy ribbon concordance from $J$ to $K$ and $\alpha \colon \pi_C \to \GL(n, R)$ a representation.
Then the induced homomorphism
\[ H_1^{\alpha \circ \iota_J \otimes \phi_J}(X_J; R[t^{\pm 1}]^n) \to H_1^{\alpha \otimes \phi_C}(X_C; R[t^{\pm 1}]^n) \]
is surjective.
\end{prop}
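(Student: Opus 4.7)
My plan is to derive the surjectivity from the long exact sequence of the pair $(X_C,X_J)$ in twisted homology with coefficients $R[t^{\pm 1}]^n$ via the representation $\alpha\otimes\phi_C$. Since $\phi_J=\phi_C\circ\iota_J$, the restriction of this local coefficient system to $X_J$ is exactly the one computing $H_1^{\alpha\circ\iota_J\otimes\phi_J}(X_J;R[t^{\pm 1}]^n)$. Consequently, the surjectivity of the induced map is equivalent to the vanishing
\[
H_1^{\alpha\otimes\phi_C}(X_C,X_J;R[t^{\pm 1}]^n)=0,
\]
which is what I aim to establish.

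Let $p\colon\widetilde{X_C}\to X_C$ denote the universal cover and set $\widetilde{X_J}\coloneqq p^{-1}(X_J)$. The connected components of $\widetilde{X_J}$ correspond to cosets $\pi_C/\iota_J(\pi_J)$, so the hypothesis that $\iota_J$ is surjective is precisely what makes $\widetilde{X_J}$ path-connected. I then apply the long exact sequence in integral homology to the pair $(\widetilde{X_C},\widetilde{X_J})$. Since $\widetilde{X_C}$ is simply connected one has $H_1(\widetilde{X_C};\Z)=0$; and since both $\widetilde{X_J}$ and $\widetilde{X_C}$ are path-connected, $H_0(\widetilde{X_J};\Z)\to H_0(\widetilde{X_C};\Z)$ is an isomorphism. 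Chasing the sequence yields $H_1(\widetilde{X_C},\widetilde{X_J};\Z)=0=H_0(\widetilde{X_C},\widetilde{X_J};\Z)$, and these vanishings hold as $\Z\pi_C$-modules simply because they hold as abelian groups.

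Next, I fix a CW structure on $X_C$ with $X_J$ as a subcomplex, so that the relative cellular chain complex $C_*\coloneqq C_*(\widetilde{X_C},\widetilde{X_J})$ is a complex of free $\Z\pi_C$-modules. The homology computation above extracts an exact sequence $C_2\to C_1\to C_0\to 0$ of $\Z\pi_C$-modules. Right exactness of the tensor product $-\otimes_{\Z\pi_C}R[t^{\pm 1}]^n$, with the $\Z\pi_C$-module structure on $R[t^{\pm 1}]^n$ coming from $\alpha\otimes\phi_C$, then produces an exact sequence in degrees $0$ and $1$, from which the desired vanishing of $H_1^{\alpha\otimes\phi_C}(X_C,X_J;R[t^{\pm 1}]^n)$ follows immediately; no flatness of $R[t^{\pm 1}]^n$ is needed.

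The only step I expect to require some care is confirming that the abelian-group vanishing $H_i(\widetilde{X_C},\widetilde{X_J};\Z)=0$ in degrees $0,1$ can be turned into the exactness of the $\Z\pi_C$-module chain complex $C_*$ in those degrees. This is automatic once the CW pair is set up so that the relative cellular chains are genuinely free over $\Z\pi_C$, after which the argument is a direct generalization of the untwisted Alexander module surjection in~\cite{FP19}.
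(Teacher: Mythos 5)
Your argument is correct and is essentially the paper's own proof: your universal-cover computation is exactly \cref{lem_vanishing} (note that $H_*(X_C, X_J; \Z \pi_C)$ is by definition the homology of $C_*(\widetilde{X}_C, \widetilde{X}_J)$, so connectedness of $\widetilde{X}_J$ via surjectivity of $\iota_J$ is the same input), your right-exactness step is just the low-degree part of the universal coefficient spectral sequence used in \cref{lem_h1}, and both proofs then conclude with the long exact sequence of the pair. One small repair: you should not assert a CW structure on the topological $4$-manifold pair $(X_C, X_J)$ (it is not known that compact topological $4$-manifolds admit CW structures), but this step is unnecessary — twisted homology is defined here via the singular chain complex of the covers, to which right-exactness of $-\otimes_{\Z\pi_C} R[t^{\pm 1}]^n$ applies directly (neither freeness nor flatness is needed), or one may pass to a finite CW pair homotopy equivalent to $(X_C, X_J)$ as is done in the proof of \cref{lem_h123}.
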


The proof of \cref{prop_surjection} is parallel to the second proof of \cite[Proposition 3.1]{FP19}.
We need the following lemmas.

\begin{lem} \label{lem_vanishing}
For a homotopy ribbon concordance $C$ from $J$ to $K$, the following hold:
\[ H_0(X_C, X_J; \Z \pi_C) = H_1(X_C, X_J; \Z \pi_C) = 0. \]
\end{lem}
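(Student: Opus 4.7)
The plan is to apply the long exact sequence of the pair $(X_C,X_J)$ in twisted homology with coefficients in $\Z\pi_C$ (viewed as a left $\Z\pi_C$-module by left multiplication, and pulled back to $X_J$ via $\iota_J\colon\pi_J\to\pi_C$):
\[
H_1(X_C;\Z\pi_C)\to H_1(X_C,X_J;\Z\pi_C)\to H_0(X_J;\Z\pi_C)\to H_0(X_C;\Z\pi_C)\to H_0(X_C,X_J;\Z\pi_C)\to 0.
\]
It will suffice to prove two things: that $H_0(X_J;\Z\pi_C)\to H_0(X_C;\Z\pi_C)$ is an isomorphism, and that $H_1(X_C;\Z\pi_C)=0$.

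For the $H_0$ statement I would use the explicit description recalled in \cref{sec_TAP}. The module $H_0(X_C;\Z\pi_C)$ is the quotient of $\Z\pi_C$ by the subgroup generated by $\{gx-x\mid g\in\pi_C,\ x\in\Z\pi_C\}$, which is simply the augmentation quotient $\Z$. Analogously, $H_0(X_J;\Z\pi_C)$ is the quotient of $\Z\pi_C$ by the subgroup generated by $\{\iota_J(g)x-x\mid g\in\pi_J,\ x\in\Z\pi_C\}$. Here condition~(2) of homotopy ribbon concordance, the surjectivity of $\iota_J$, is the essential input: it ensures $\iota_J(\pi_J)=\pi_C$, so the two quotients coincide with $\Z$ and the inclusion-induced map is the identity. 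This already yields $H_0(X_C,X_J;\Z\pi_C)=0$.

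For $H_1(X_C;\Z\pi_C)=0$ I would invoke the standard identification of twisted homology with coefficients in the group ring (via the identity representation) as the singular homology of the universal cover: $H_1(X_C;\Z\pi_C)\cong H_1(\widetilde{X_C};\Z)$, which vanishes because $\widetilde{X_C}$ is simply connected. Combined with the injectivity of the map $H_0(X_J;\Z\pi_C)\to H_0(X_C;\Z\pi_C)$ just established, the long exact sequence forces $H_1(X_C,X_J;\Z\pi_C)=0$ as well.

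I do not expect any genuine obstacle here: the lemma is essentially a formal translation of the surjectivity condition in the definition of homotopy ribbon concordance into a vanishing statement. Its real significance lies in the use it will be put to in the proof of \cref{prop_surjection}, where it presumably feeds into an argument (likely a comparison of long exact sequences or a change-of-rings/spectral sequence argument) that propagates the vanishing from the $\Z\pi_C$-coefficient setting to the twisted Alexander module setting with coefficients $R[t^{\pm 1}]^n$.
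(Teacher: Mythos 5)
Your proof is correct and follows essentially the same route as the paper: the long exact sequence of the pair $(X_C,X_J)$ with $\Z\pi_C$-coefficients, the surjectivity of $\iota_J$ forcing $H_0(X_J;\Z\pi_C)\to H_0(X_C;\Z\pi_C)$ to be an isomorphism, and $H_1(X_C;\Z\pi_C)\cong H_1(\widetilde{X}_C;\Z)=0$. The only difference is that you spell out the coinvariant description of $H_0$, which the paper leaves implicit.
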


\begin{proof}
We consider the homology long exact sequence for $(X_C, X_J)$:
\[
\begin{split}
& H_1(X_C; \Z \pi_C)  \to H_1(X_C, X_J; \Z \pi_C)  \to \\
H_0(X_J; \Z \pi_C)  \to ~& H_0(X_C; \Z \pi_C)  \to H_0(X_C, X_J; \Z \pi_C)  \to 0.
\end{split}
\]
Since $\iota_J$ is surjective, the induced homomorphism $H_0(X_J; \Z \pi_C) \to H_0(X_C; \Z \pi_C)$ is an isomorphism.
Also, we have $H_1(X_C; \Z \pi_C) = H_1(\widetilde{X}_C; \Z) = 0$, which proves the lemma.
\end{proof}

\begin{lem} \label{lem_h1}
For a homotopy ribbon concordance $C$ from $J$ to $K$ and a representation $\alpha \colon \pi_C \to \GL(n, R)$, the following holds:
\[ H_1^{\alpha \otimes (\pm \phi_C)}(X_C, X_J; R[t^{\pm 1}]^n) = 0. \]
\end{lem}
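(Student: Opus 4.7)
The plan is to bootstrap \cref{lem_vanishing} to arbitrary twisted coefficients by an elementary homological algebra argument. Fix a CW structure on $X_C$ in which $X_J$ is a subcomplex, and let $C_* = C_*(\widetilde{X}_C, \widetilde{X}_J)$ denote the relative cellular chain complex of the universal cover pair. Then $C_*$ is a chain complex of free $\Z\pi_C$-modules, and by definition
\[
H_i^{\alpha \otimes (\pm\phi_C)}(X_C, X_J; R[t^{\pm 1}]^n) = H_i\bigl(\ol{C}_* \otimes_{\Z \pi_C} M\bigr),
\]
where $M \coloneqq R[t^{\pm 1}]^n$ carries the $\Z\pi_C$-action through $\alpha \otimes (\pm \phi_C)$.

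Next, I would use the vanishing of $H_0(C_*)$ and $H_1(C_*)$ from \cref{lem_vanishing}. Let $Z_1 = \ker(\partial_1 \colon C_1 \to C_0)$. The vanishing $H_0(C_*) = 0$ produces a short exact sequence $0 \to Z_1 \to C_1 \to C_0 \to 0$ which splits because $C_0$ is free; tensoring with $M$ therefore preserves exactness, so $\ker(\partial_1 \otimes 1_M) = Z_1 \otimes_{\Z \pi_C} M$. Meanwhile, $H_1(C_*) = 0$ means that $\partial_2 \colon C_2 \to C_1$ surjects onto $Z_1$, and right-exactness of $-\otimes_{\Z\pi_C} M$ gives $\im(\partial_2 \otimes 1_M) = Z_1 \otimes_{\Z\pi_C} M$. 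Combining these, $\ker(\partial_1 \otimes 1_M) = \im(\partial_2 \otimes 1_M)$, whence $H_1(\ol{C}_* \otimes_{\Z\pi_C} M) = 0$.

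I do not anticipate any serious obstacle: aside from the routine bookkeeping around the involution on $\Z\pi_C$ (which only affects whether $M$ is regarded as a left or right module) and the existence of a CW pair structure (standard for the topological category, e.g.~via a handle decomposition of the exterior), the only inputs beyond \cref{lem_vanishing} are the freeness of $C_0$ and the right-exactness of the tensor product. If one prefers, the entire computation can be repackaged by invoking the hyper-Tor spectral sequence
\[
E^2_{p,q} = \Tor_p^{\Z\pi_C}\bigl(H_q(C_*), M\bigr) \Rightarrow H_{p+q}\bigl(C_* \otimes_{\Z\pi_C} M\bigr),
\]
whose $p+q = 1$ terms all vanish under the hypothesis.
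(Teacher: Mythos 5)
Your proof is correct and takes essentially the same route as the paper: the paper simply applies the universal coefficient spectral sequence $E^2_{p,q}=\Tor_p^{\Z\pi_C}(H_q(X_C,X_J;\Z\pi_C),R[t^{\pm1}]^n)$ to \cref{lem_vanishing}, which is exactly your ``hyper-Tor'' repackaging, and your splitting argument is just the low-degree unpacking of that spectral sequence. One small caveat: since a compact topological $4$-manifold exterior need not admit a handle decomposition, justify the chain-level setup either by passing to a finite CW pair homotopy equivalent to $(X_C,X_J)$ (as the paper does elsewhere) or by running your argument verbatim on the singular chain complex of $(\widetilde{X}_C,\widetilde{X}_J)$, whose modules are also free over $\Z\pi_C$.
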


\begin{proof}
We consider the universal coefficient spectral sequence for homology groups~\cite[Theorem 10.90]{Ro09}:
\[ E_{p, q}^2 = \tor_p^{\Z \pi_C}(H_q(X_C, X_J; \Z \pi_C), R[t^{\pm 1}]^n) \Rightarrow H_{p+q}^{\alpha \otimes (\pm \phi_C)}(X_C, X_J; R[t^{\pm 1}]^n), \]
where the $\tor$ group on the left hand side uses $\alpha \otimes (\pm \phi_C)$.
It follows from \cref{lem_vanishing} that
\[ \tor_1^{\Z \pi_C}(H_0(X_C, X_J; \Z \pi_C), R[t^{\pm 1}]^n) = H_1(X_C, X_J; \Z \pi_C) \otimes_{\Z \pi_C} R[t^{\pm 1}]^n = 0. \]
Thus all the terms on the line $p+q=1$ of $E^2$ page of the spectral sequence vanish, and the desired equation holds.
\end{proof}

The case of $-\phi_C$ in \cref{lem_h1} will be considered in the proof of \cref{lem_h123}.

\begin{proof}[Proof of \cref{prop_surjection}]
Consider the long exact sequence of pairs, and apply~\cref{lem_h1}.
\end{proof}

\subsection{Injections of twisted Alexander modules}

Next we prove the following proposition, which generalizes \cite[Proposition 3.4]{FP19}.

\begin{prop} \label{prop_injection}
Let $C$ be a homotopy ribbon concordance from $J$ to $K$ and let $\alpha \colon \pi_C \to \GL(n, R)$ be a representation.
If $\Delta_J^{\alpha \circ \iota_J} \neq 0$, then the induced homomorphism
\[ H_1^{\alpha \circ \iota_K \otimes \phi_K}(X_K; R[t^{\pm 1}]^n) \to H_1^{\alpha \otimes \phi_C}(X_C; R[t^{\pm 1}]^n) \]
is injective.
\end{prop}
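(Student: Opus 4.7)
The plan is to combine Poincaré–Lefschetz duality with the universal coefficient spectral sequence to identify $H_2^{\alpha\otimes\phi_C}(X_C, X_K; R[t^{\pm 1}]^n)$ with a Hom module, and then use the torsion hypothesis coming from $\Delta_J^{\alpha\circ\iota_J}\neq 0$ to force the relevant connecting map to vanish. This parallels the untwisted argument of \cite[Proposition 3.4]{FP19}, with $R[t^{\pm 1}]^n$-coefficients replacing $\Z[t^{\pm 1}]$ throughout.

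First, I consider the long exact sequence of the pair $(X_C, X_K)$ with $R[t^{\pm 1}]^n$-coefficients (twisted by $\alpha\otimes\phi_C$ on $X_C$ and by $\alpha\circ\iota_K\otimes\phi_K$ on $X_K$). Injectivity of the map in the statement is equivalent to showing that the connecting homomorphism
\[\delta\colon H_2^{\alpha\otimes\phi_C}(X_C, X_K; R[t^{\pm 1}]^n) \to H_1^{\alpha\circ\iota_K\otimes\phi_K}(X_K; R[t^{\pm 1}]^n)\]
has trivial image. Next, I apply Poincaré–Lefschetz duality to the $4$-manifold $X_C$, using its boundary decomposition $\partial X_C = X_J \cup (T^2\times I) \cup X_K$: since the collar piece $T^2\times I$ deformation retracts onto a boundary torus of $X_J$, duality yields
\[H_2^{\alpha\otimes\phi_C}(X_C, X_K; R[t^{\pm 1}]^n) \cong H^2_{\alpha\otimes\phi_C}(X_C, X_J; \ol{R[t^{\pm 1}]^n}).\]

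I then run the universal coefficient spectral sequence
\[E_2^{p,q} = \ext^p_{\Z\pi_C}\bigl(H_q(X_C, X_J; \Z\pi_C),\, R[t^{\pm 1}]^n\bigr) \Longrightarrow H^{p+q}_{\alpha\otimes\phi_C}(X_C, X_J; \ol{R[t^{\pm 1}]^n}).\]
By \cref{lem_vanishing}, the groups $H_q(X_C, X_J; \Z\pi_C)$ vanish for $q\in\{0,1\}$, so the entire rows $q=0$ and $q=1$ of the $E_2$-page are zero. Consequently, in total degree $2$ only $E_2^{0,2}$ survives, giving the identification
\[H_2^{\alpha\otimes\phi_C}(X_C, X_K; R[t^{\pm 1}]^n) \cong \ohom_{\Z\pi_C}\bigl(H_2(X_C, X_J; \Z\pi_C),\, R[t^{\pm 1}]^n\bigr).\]

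The main obstacle will be to show that $\delta$ vanishes once transported through this identification. This is where the hypothesis $\Delta_J^{\alpha\circ\iota_J} \neq 0$ is used: by \cref{prop_surjection}, the module $H_1^{\alpha\otimes\phi_C}(X_C; R[t^{\pm 1}]^n)$ is a quotient of $H_1^{\alpha\circ\iota_J\otimes\phi_J}(X_J; R[t^{\pm 1}]^n)$, and the latter has nonzero order $\Delta_J^{\alpha\circ\iota_J}$, so both are $R[t^{\pm 1}]$-torsion. After tensoring the long exact sequence of $(X_C, X_K)$ with the quotient field of $R[t^{\pm 1}]$, the target of $\delta$ becomes torsion as well. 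The technical heart of the proof is then to chase $\delta$ through the duality and UCSS identifications, realizing it as a Kronecker-evaluation-type map whose image is controlled by the (now torsion) module $H_1(X_J; R[t^{\pm 1}]^n)$ combined with the vanishing from \cref{lem_vanishing}; this forces $\delta$ to be zero.
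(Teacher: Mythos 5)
Your setup (the long exact sequence of the pair, Poincar\'e--Lefschetz duality identifying $H_2^{\alpha\otimes\phi_C}(X_C,X_K;R[t^{\pm 1}]^n)$ with $H^2$ of $(X_C,X_J)$, and a universal coefficient spectral sequence) has the right general shape, but the step carrying all of the mathematical content is missing, and the route you sketch for it does not work as stated. After identifying $H_2^{\alpha\otimes\phi_C}(X_C,X_K;R[t^{\pm 1}]^n)\cong \ohom_{\Z\pi_C}\bigl(H_2(X_C,X_J;\Z\pi_C),R[t^{\pm 1}]^n\bigr)$ via the UCSS over $\Z\pi_C$, you assert that the connecting map is ``a Kronecker-evaluation-type map whose image is controlled by the (now torsion) module $H_1(X_J;R[t^{\pm 1}]^n)$'' and is therefore zero. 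That is not an argument: there is no reason for that $\ohom_{\Z\pi_C}$-module to vanish, and $R[t^{\pm 1}]$-torsionness of $H_1(X_J;R[t^{\pm 1}]^n)$ has no evident bearing on homomorphisms out of the $\Z\pi_C$-module $H_2(X_C,X_J;\Z\pi_C)$. The obstacle is precisely that working over $\Z\pi_C$ gives you no way to exploit torsion; the useful vanishing principle is $\hom_{R[t^{\pm 1}]}(T,R[t^{\pm 1}])=0$ for $T$ a torsion module, and to invoke it one must first rewrite $\hom_{\Z\pi_C}(C_*(\widetilde{X}_C,\widetilde{X}_J),R[t^{\pm 1}]^n)$ as $\hom_{R[t^{\pm 1}]}(C_*(\widetilde{X}_C,\widetilde{X}_J)\otimes_{\Z\pi_C}R[t^{\pm 1}]^n,R[t^{\pm 1}])$, which forces the dual representation $\alpha^\dagger\otimes(-\phi_C)$ into the picture --- an ingredient entirely absent from your sketch. (Also, your claim that ``the target of $\delta$ becomes torsion'' is unjustified at this stage: torsionness of $H_1^{\alpha\circ\iota_K\otimes\phi_K}(X_K;R[t^{\pm 1}]^n)$ is only known a posteriori, once the proposition and the resulting divisibility are in hand.)

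Concretely, the paper's proof (\cref{lem_ss,lem_h123}) runs the universal coefficient spectral sequence over $R[t^{\pm 1}]$, and does so twice. First, $\Delta_J^{\alpha\circ\iota_J}\neq 0$ together with \cref{prop_surjection} makes $H_1^{\alpha\otimes\phi_C}(X_C,X_K;R[t^{\pm 1}]^n)$ torsion, and feeding this into the first duality spectral sequence kills $H_3^{\alpha^\dagger\otimes(-\phi_C)}(X_C,X_J;R[t^{\pm 1}]^n)$. Next, an Euler characteristic count ($\chi(X_C,X_J)=0$), combined with \cref{lem_h1,lem_h0}, shows that $H_2^{\alpha^\dagger\otimes(-\phi_C)}(X_C,X_J;R[t^{\pm 1}]^n)$ has $R[t^{\pm 1}]$-rank zero, i.e.\ is torsion. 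Only then does the second duality spectral sequence have all $E_2$-terms in total degree $2$ equal to zero, giving $H_2^{\alpha\otimes\phi_C}(X_C,X_K;R[t^{\pm 1}]^n)=0$ and hence the injectivity. None of the dual representation, the rank/Euler characteristic argument, or the two-stage use of duality appears in your proposal, and without some equivalent of these steps the vanishing you need remains unproved.
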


We collect ingredients of the proof as the following three lemmas. The proof of the first one is straightforward.

\begin{lem} \label{lem_h0}
For a homotopy ribbon concordance $C$ from $J$ to $K$ and a representation $\alpha \colon \pi_C \to \GL(n, R)$, the following hold:
\[ H_0^{\alpha \otimes (\pm \phi_C)}(X_C, X_J; R[t^{\pm 1}]^n) = H_0^{\alpha \otimes (\pm \phi_J)}(X_C, X_K; R[t^{\pm 1}]^n) = 0. \]
\end{lem}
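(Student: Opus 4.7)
The plan is to reduce both vanishing statements to the elementary fact that the inclusion-induced map on zeroth twisted homology is surjective, using the right-exact tail of the long exact sequence of the pair. For any subspace $Y\subseteq X_C$ and representation $\rho\colon\pi_C\to\GL(n,R[t^{\pm 1}])$, that sequence reads
\[
H_0^{\rho\circ\iota_Y}(Y;R[t^{\pm 1}]^n)\to H_0^{\rho}(X_C;R[t^{\pm 1}]^n)\to H_0^{\rho}(X_C,Y;R[t^{\pm 1}]^n)\to 0,
\]
where $\iota_Y$ denotes the inclusion. So it suffices to show the first map is surjective in each of the two cases.

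The key observation is the description recalled in \cref{sec_TAP}: each term on the left is naturally a quotient of $R[t^{\pm 1}]^n$, specifically $R[t^{\pm 1}]^n/A_Y$ and $R[t^{\pm 1}]^n/A_C$, where $A_Y$ is the submodule generated by $\{\rho(\iota_Y(\gamma))v-v:\gamma\in\pi_1(Y),v\in R[t^{\pm 1}]^n\}$ and $A_C$ by $\{\rho(\delta)v-v:\delta\in\pi_C,v\in R[t^{\pm 1}]^n\}$. Functoriality identifies the map on $H_0$ with the canonical projection, which is surjective as soon as $A_Y\subseteq A_C$; the latter always holds because $\iota_Y$ is a group homomorphism into $\pi_C$.

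Applying this template, for the first equality I would take $Y=X_J$ and $\rho=\alpha\otimes(\pm\phi_C)$. Since $\phi_C\circ\iota_J=\phi_J$ and $\iota_J$ is surjective (this is the one homotopy-ribbon hypothesis in play), the two generating sets for $A_{X_J}$ and $A_C$ are literally the same, so the map is in fact an isomorphism. For the second equality I would take $Y=X_K$ and the same $\rho=\alpha\otimes(\pm\phi_C)$, interpreting the $\phi_J$ in the statement as its unique extension $\phi_C$ along the surjection $\iota_J$ (so that the restriction to $\pi_K$ via $\iota_K$ equals $\phi_K$). Here $\iota_K$ is merely injective, so the inclusion $A_{X_K}\subseteq A_C$ may be proper, but the map $R[t^{\pm 1}]^n/A_{X_K}\twoheadrightarrow R[t^{\pm 1}]^n/A_C$ is still surjective.

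There is no real obstacle: the argument is purely formal once $H_0$ is unpacked as coinvariants, which is exactly why the authors flag the lemma as straightforward. The only mildly delicate point is the notational one of reading $\phi_J$ on $X_C$ as the uniquely determined $\phi_C$; all the homological content sits in the surjectivity of $\iota_J$, and the injectivity of $\iota_K$ plays no role at this stage (it is reserved for the $H_1$ arguments in \cref{prop_injection}).
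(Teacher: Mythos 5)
Your proof is correct, and it is essentially the argument the paper has in mind: the paper omits the proof, declaring it straightforward, and the intended reasoning is exactly your identification of the tail of the long exact sequence of the pair with a map of coinvariant quotients $R[t^{\pm 1}]^n/A_Y \to R[t^{\pm 1}]^n/A_C$, which is onto since $A_Y \subseteq A_C$, forcing the relative $H_0$ to vanish. Your reading of the $\phi_J$ in the second term as the unique extension $\phi_C$ (a notational slip in the statement) is also the right one, and you correctly observe that no homotopy ribbon hypothesis is actually needed here.
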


For a representation $\alpha \colon \pi_C \to \GL(n, R)$, we write $\alpha^\dagger \colon \pi_C \to \GL(n, R)$ for its dual given by
\[ \alpha(\gamma) = \ol{\alpha(\gamma^{-1})}^T \]
for $\gamma \in \pi_C$, where $\ol{\alpha(\gamma^{-1})}$ is the matrix obtained by replacing each entry $a$ in the matrix~$\alpha(\gamma^{-1})$ by~$\bar{a}$.
Note that $(\alpha^\dagger)^\dagger = \alpha$.

\begin{lem} \label{lem_ss}\leavevmode
\begin{enumerate}[label=\emph{(\roman*)}]
\item There exists a convergent spectral sequence
\[ E_2^{p, q} = \ext_q^{R[t^{\pm 1}]}(H_p^{\alpha \otimes \phi_C}(X_C, X_K; R[t^{\pm 1}]^n), R[t^{\pm 1}]) \Rightarrow H_{4-p-q}^{\alpha^\dagger \otimes (-\phi_C)}(X_C, X_J; R[t^{\pm 1}]^n). \]
\item There exists a convergent spectral sequence
\[ E_2^{p, q} = \ext_q^{R[t^{\pm 1}]}(H_p^{\alpha^\dagger \otimes (-\phi_C)}(X_C, X_J; R[t^{\pm 1}]^n), R[t^{\pm 1}]) \Rightarrow H_{4-p-q}^{\alpha \otimes \phi_C}(X_C, X_K; R[t^{\pm 1}]^n). \]
\end{enumerate}
\end{lem}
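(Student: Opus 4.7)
The plan is to combine Poincaré–Lefschetz duality for the 4-manifold $X_C$ with the universal coefficient spectral sequence. I will write out the argument for (i); statement (ii) is then obtained by applying the same argument with $J$ and $K$ swapped and $(\alpha, \phi_C)$ replaced by $(\alpha^\dagger, -\phi_C)$, using that $(\alpha^\dagger)^\dagger = \alpha$.

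First, I record the relevant structure of $X_C$. It is a compact, orientable topological $4$-manifold whose boundary decomposes as
\[
\partial X_C = X_J \cup_{T^2} (T^2 \times I) \cup_{T^2} X_K,
\]
where the cylinder $T^2 \times I$ is the portion of $\partial \nu C$ away from $\partial \nu J \cup \partial \nu K$. Cutting this cylinder along $T^2 \times \{1/2\}$, I may write $\partial X_C = A \cup B$ with $A\simeq X_J$, $B\simeq X_K$, and $A\cap B \cong T^2$, so that the pairs $(X_C, X_J)$ and $(X_C, A)$ are homotopy equivalent, and likewise $(X_C, X_K)\simeq (X_C, B)$. Since $X_C$ is a topological $4$-manifold, it admits sufficient structure (via \cite{FNOP19}) to apply twisted Poincaré–Lefschetz duality: for any representation $\beta\colon \pi_C\to \GL(n, R[t^{\pm 1}])$, cap product with the fundamental class yields an isomorphism
\[
H^{k}_{\beta}(X_C, X_K; R[t^{\pm 1}]^n)\;\cong\;H_{4-k}^{\beta^\vee}(X_C, X_J; R[t^{\pm 1}]^n),
\]
where $\beta^\vee$ denotes the coefficient system obtained by converting the left action via the $\mathbb{Z}\pi_C$-involution together with the involution on $R[t^{\pm 1}]$. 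For $\beta = \alpha\otimes \phi_C$ one computes, using that the involution on $R[t^{\pm 1}]$ extends that of $R$ by $\bar{t} = t^{-1}$, that $\beta^\vee = \alpha^\dagger \otimes(-\phi_C)$.

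Finally, I apply the universal coefficient spectral sequence to the $R[t^{\pm 1}]$-chain complex
\[
\overline{C}_*(\widetilde{X}_C,\widetilde{X}_K)\otimes_{\Z\pi_C} R[t^{\pm 1}]^n,
\]
which computes $H_*^{\alpha\otimes\phi_C}(X_C, X_K; R[t^{\pm 1}]^n)$. Since $R[t^{\pm 1}]$ is Noetherian and this complex is finitely generated (as $X_C$ has the homotopy type of a finite CW complex), one obtains a convergent first-quadrant spectral sequence
\[
E_2^{p,q} = \Ext_{R[t^{\pm 1}]}^q\!\bigl(H_p^{\alpha\otimes\phi_C}(X_C, X_K; R[t^{\pm 1}]^n),\, R[t^{\pm 1}]\bigr)\;\Longrightarrow\; H^{p+q}_{\alpha\otimes\phi_C}(X_C, X_K; R[t^{\pm 1}]^n).
\]
Composing the abutment with the Poincaré–Lefschetz isomorphism from the previous paragraph and setting $k = p+q$ produces the spectral sequence of (i). I expect the main obstacle to lie not in the algebraic setup of the UCSS, which is standard, but in the careful bookkeeping of the Poincaré–Lefschetz duality step: one must track how the left/right $\Z\pi_C$-module conventions, the $\Z\pi_C$-involution used to define $\overline{C}_*$, and the $R[t^{\pm 1}]$-involution $\bar t = t^{-1}$ combine to produce precisely the dual representation $\alpha^\dagger\otimes(-\phi_C)$, rather than one of the other natural candidates (e.g.~$\alpha^\dagger\otimes\phi_C$).
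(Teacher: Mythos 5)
Your route is the same as the paper's --- Poincar\'e--Lefschetz duality for $X_C$ combined with the universal coefficient spectral sequence over $R[t^{\pm 1}]$, with the dual representation $\alpha^\dagger\otimes(-\phi_C)$ arising from dualizing over $R[t^{\pm1}]$ --- and the spectral sequence you arrive at is the correct one (the paper proves (ii) and notes (i) is symmetric, just as you treat (i) and appeal to symmetry for (ii)). However, read against the paper's definitions of twisted (co)homology, your two intermediate isomorphisms are each off by a dualization, and only their composite is right. With $H_*^\beta$ defined via $\ol{C}_*(\widetilde X,\widetilde Y)\otimes_{\Z \pi_C}R[t^{\pm1}]^n$ and $H^*_\beta$ via $\hom_{\Z \pi_C}(C_*(\widetilde X,\widetilde Y),R[t^{\pm1}]^n)$, the twisted Poincar\'e--Lefschetz duality of the reference you cite (and which the paper uses) preserves the representation: $H_{4-k}^{\alpha\otimes\phi_C}(X_C,X_K;R[t^{\pm1}]^n)\cong H^{k}_{\alpha\otimes\phi_C}(X_C,X_J;R[t^{\pm1}]^n)$, with no dual coefficient system $\beta^\vee$ appearing; the conjugation is already built into the asymmetry between the definitions of twisted homology and cohomology. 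Correspondingly, the abutment of the universal coefficient spectral sequence applied to $\ol{C}_*(\widetilde X_C,\widetilde X_K)\otimes_{\Z \pi_C}R[t^{\pm1}]^n$ (tensored via $\alpha\otimes\phi_C$) is the cohomology of its $R[t^{\pm1}]$-dual, and the adjunction $\hom_{\Z \pi_C}(C_*,R[t^{\pm1}]^n)\cong\hom_{R[t^{\pm1}]}(C_*\otimes_{\Z\pi_C}R[t^{\pm1}]^n,R[t^{\pm1}])$ exchanges a representation with its dual; so that abutment is $H^{p+q}_{\alpha^\dagger\otimes(-\phi_C)}(X_C,X_K;R[t^{\pm1}]^n)$, not $H^{p+q}_{\alpha\otimes\phi_C}(X_C,X_K;R[t^{\pm1}]^n)$ as in your display. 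In the paper's proof it is this adjunction, not the duality step, where $\alpha^\dagger\otimes(-\phi_C)$ enters. Your two misplaced twists cancel, so the stated spectral sequences come out correct; but to be a proof, the bookkeeping you yourself flag as the delicate point must be fixed: keep the same representation across the duality isomorphism, and place the dual representation in the identification of the abutment of the universal coefficient spectral sequence.
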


\begin{proof}
Since the proof of (i) is similar to that of (ii), we only prove (ii).
By Poincar\'e-Lefschetz duality we have an isomorphism
\[ H_{4-i}^{\alpha \otimes \phi_C}(X_C, X_K; R[t^{\pm 1}]^n) \cong H_{\alpha \otimes \phi_C}^i(X_C, X_J; R[t^{\pm 1}]^n) \]
(See for instance \cite[Theorem A.15]{FNOP19} for the case of twisted coefficients.)
Also, we have an isomorphism
\[ \hom_{\Z \pi_C}(C_*(\widetilde{X}_C, \widetilde{X}_J), R[t^{\pm 1}]^n) \cong \hom_{R[t^{\pm 1}]}(C_*(\widetilde{X}_C, \widetilde{X}_J) \otimes_{\Z \pi_C} R[t^{\pm 1}]^n, R[t^{\pm 1}]), \]
where the tensor product on the right hand side uses $\alpha^\dagger \otimes (-\phi_C)$, given by
\[ f \mapsto \left( (c \otimes v) \mapsto f(c)^T \bar{v} \right) \]
for $f \in \hom_{\Z \pi_C}(C_*(\widetilde{X}_C, \widetilde{X}_J), R[t^{\pm 1}]^n)$, $c \in C_*(\widetilde{X}_C, \widetilde{X}_J)$ and $v \in R[t^{\pm 1}]^n$.
Now the lemma follows from the universal coefficient spectral sequence for cohomology groups~\cite[Theorem 2.3]{Lev77}.
\end{proof}

\begin{lem} \label{lem_h123}
Let $C$ be a homotopy ribbon concordance from $J$ to $K$ and let $\alpha \colon \pi_C \to \GL(n, R)$ be a representation.
If $\Delta_J^{\alpha \circ \iota_J} \neq 0$, then the following hold:
\begin{enumerate}[label=\emph{(\roman*)}]
\item $H_1^{\alpha \otimes \phi_C}(X_C, X_K; R[t^{\pm 1}]^n)$ is a torsion module.
\item $H_2^{\alpha^\dagger \otimes (-\phi_C)}(X_C, X_J; R[t^{\pm 1}]^n)$ is a torsion module.
\item $H_3^{\alpha^\dagger \otimes (-\phi_C)}(X_C, X_J; R[t^{\pm 1}]^n) = 0$.
\end{enumerate}
\end{lem}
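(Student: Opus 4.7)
The plan is to handle parts~(i) and~(iii) directly from \cref{prop_surjection}, \cref{lem_h0}, \cref{lem_h1}, and \cref{lem_ss}, and then deduce~(ii) by first establishing the torsion-ness of $H_2^{\alpha \otimes \phi_C}(X_C, X_K; R[t^{\pm 1}]^n)$ via an Euler characteristic argument.

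For~(i), I invoke the portion
\[
H_1^{\alpha \otimes \phi_C}(X_C; R[t^{\pm 1}]^n) \to H_1^{\alpha \otimes \phi_C}(X_C, X_K; R[t^{\pm 1}]^n) \to H_0^{\alpha \circ \iota_K \otimes \phi_K}(X_K; R[t^{\pm 1}]^n)
\]
of the long exact sequence of the pair $(X_C, X_K)$. The left term is a quotient of $H_1^{\alpha \circ \iota_J \otimes \phi_J}(X_J; R[t^{\pm 1}]^n)$ by \cref{prop_surjection}, and the latter is torsion by the hypothesis $\Delta_J^{\alpha \circ \iota_J} \neq 0$; the right term is annihilated by $\det(\alpha(\iota_K(\mu_K))\, t - I)$ for a meridian $\mu_K$ of $K$, which is a nonzero element of $R[t^{\pm 1}]$. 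Hence the middle term is sandwiched between torsion modules and is itself torsion.

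For~(iii), I apply the spectral sequence of \cref{lem_ss}(i) with target degree~$3$. The only contributing terms lie on the antidiagonal $p + q = 1$, namely $E_2^{0,1}$ and $E_2^{1,0}$. The former vanishes because $H_0^{\alpha \otimes \phi_C}(X_C, X_K; R[t^{\pm 1}]^n) = 0$ by \cref{lem_h0}, and the latter vanishes because $\hom$ from the torsion module $H_1^{\alpha \otimes \phi_C}(X_C, X_K; R[t^{\pm 1}]^n)$ (torsion by~(i)) into the domain $R[t^{\pm 1}]$ is zero.

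The main obstacle is~(ii): the spectral sequence of \cref{lem_ss}(i) in target degree~$2$ has the contribution $E_2^{2,0} = \hom(H_2^{\alpha \otimes \phi_C}(X_C, X_K; R[t^{\pm 1}]^n), R[t^{\pm 1}])$, which vanishes only once $H_2^{\alpha \otimes \phi_C}(X_C, X_K; R[t^{\pm 1}]^n)$ is known to be a torsion $R[t^{\pm 1}]$-module. I plan to obtain this via Euler characteristic. Repeating the argument of~(iii), now applied to the dual spectral sequence of \cref{lem_ss}(ii) in target degrees~$3$ and~$4$, yields $H_3^{\alpha \otimes \phi_C}(X_C, X_K; R[t^{\pm 1}]^n) = H_4^{\alpha \otimes \phi_C}(X_C, X_K; R[t^{\pm 1}]^n) = 0$, using only \cref{lem_h0} and \cref{lem_h1}. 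Since $X_C$ and $X_K$ are compact with vanishing topological Euler characteristic, $\chi(X_C, X_K) = 0$, and so the alternating sum of $R[t^{\pm 1}]$-ranks of the twisted homology of the pair equals $n \cdot \chi(X_C, X_K) = 0$; combined with $H_0 = 0$, $H_1$ torsion, and $H_3 = H_4 = 0$, this forces $H_2^{\alpha \otimes \phi_C}(X_C, X_K; R[t^{\pm 1}]^n)$ to have $R[t^{\pm 1}]$-rank zero, i.e.~to be torsion. The three antidiagonal terms of the spectral sequence in \cref{lem_ss}(i) for target degree~$2$ are then all torsion ($E_2^{0,2} = 0$, $E_2^{1,1}$ is $\ext^1$ of a torsion module, which remains torsion as $R[t^{\pm 1}]$ is a Noetherian domain, and $E_2^{2,0} = 0$), which gives~(ii).
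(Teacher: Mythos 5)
Your treatment of (i) and (iii) coincides with the paper's: part (i) via the long exact sequence of $(X_C,X_K)$ combined with \cref{prop_surjection} (your explicit annihilator $\det(\alpha(\iota_K(\mu_K))t-I)$ simply fills in the paper's ``easy to check''), and part (iii) via the vanishing of the two terms on the line $p+q=1$ of the spectral sequence of \cref{lem_ss}(i).

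For (ii) you take a genuinely different, though closely related, route, and it is correct. The paper runs the rank count directly on $(X_C,X_J)$ with $\alpha^\dagger\otimes(-\phi_C)$-twisted coefficients: by \cref{lem_h0,lem_h1} (applied to $\alpha^\dagger$) and part (iii), the modules in degrees $0$, $1$ and $3$ vanish there, so $\rank_{R[t^{\pm 1}]}H_2^{\alpha^\dagger \otimes (-\phi_C)}(X_C,X_J;R[t^{\pm 1}]^n)=n\chi(X_C,X_J)=0$. You instead run the Euler characteristic argument on the other pair: \cref{lem_ss}(ii) on the lines $p+q=1$ and $p+q=0$, together with \cref{lem_h0,lem_h1} for $\alpha^\dagger\otimes(-\phi_C)$, gives $H_3^{\alpha\otimes\phi_C}(X_C,X_K;R[t^{\pm 1}]^n)=H_4^{\alpha\otimes\phi_C}(X_C,X_K;R[t^{\pm 1}]^n)=0$; then $\chi(X_C,X_K)=0$ together with (i) and \cref{lem_h0} forces $H_2^{\alpha\otimes\phi_C}(X_C,X_K;R[t^{\pm 1}]^n)$ to be torsion; finally you pass back through \cref{lem_ss}(i) on the line $p+q=2$, where $E_2^{0,2}$ and $E_2^{2,0}$ vanish and $E_2^{1,1}$ is torsion ($\ext^1$ of a finitely generated torsion module over the Noetherian domain $R[t^{\pm 1}]$ is torsion), so the abutment $H_2^{\alpha^\dagger\otimes(-\phi_C)}(X_C,X_J;R[t^{\pm 1}]^n)$, a finite iterated extension of subquotients of these terms, is torsion. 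Both versions gloss the same finiteness points in the Euler characteristic step (a finite CW model of the appropriate dimension for the pair, vanishing of the twisted homology above the relevant degree, the latter also available from duality); yours is a little longer and more indirect, but it has the side benefits of exhibiting explicitly that $H_2^{\alpha\otimes\phi_C}(X_C,X_K;R[t^{\pm 1}]^n)$ is torsion --- the module whose vanishing is the actual goal in the proof of \cref{prop_injection} --- and of proving the top-degree vanishing outright rather than relying on a low-dimensional CW model for $(X_C,X_J)$, while the paper's version buys brevity: a single rank count and no second pass through the spectral sequences.
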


\begin{proof}
We first prove (i).
We consider the homology long exact sequence for $(X_C, X_K)$:
\[ H_1^{\alpha \otimes \phi_C}(X_C; R[t^{\pm 1}]^n) \to H_1^{\alpha \otimes \phi_C}(X_C, X_K; R[t^{\pm 1}]^n) \to H_0^{\alpha \circ \iota_K \otimes \phi_K}(X_K; R[t^{\pm 1}]^n). \]
It is easy to check that $H_0^{\alpha \circ \iota_K \otimes \phi_K}(X_K; R[t^{\pm 1}]^n)$ is a torsion module.
Since $\Delta_J^{\alpha \circ \iota_J} \neq 0$, the module $H_1^{\alpha \circ \iota_J \otimes \phi_J}(X_J; R[t^{\pm 1}]^n)$ is torsion, and it follows from \cref{prop_surjection} that $H_1^{\alpha \otimes \phi_C}(X_C; R[t^{\pm 1}]^n)$ is also a torsion module.
Therefore (i) follows.

Next we prove (iii).
It follows from \cref{lem_h0} and (i) that
\begin{align*}
\ext_1^{R[t^{\pm 1}]}(H_0^{\alpha \otimes \phi_C}(X_C, X_K; R[t^{\pm 1}]^n), R[t^{\pm 1}]) &= 0, \\
\hom_{R[t^{\pm 1}]}(H_1^{\alpha \otimes \phi_C}(X_C, X_K; R[t^{\pm 1}]^n), R[t^{\pm 1}]) &=0.
\end{align*}
Thus all the terms on the line $p+q=1$ of $E^2$ page of the spectral sequence of \cref{lem_ss}~(i) vanish.
Therefore (iii) follows.

Finally we prove (ii).
Let $(X, Y)$ be a finite CW-pair homotopy equivalent to $(X_C, X_J)$, and we identify $\pi_1(X)$ with $\pi_C$.
By (iii) and \cref{lem_h1,lem_h0}, we have
\begin{align*}
&\rank_{R[t^{\pm 1}]} H_2^{\alpha^\dagger \otimes (-\phi_C)}(X_C, X_J; R[t^{\pm 1}]^n)\\
=& \sum_{i=0}^3 (-1)^i \rank_{R[t^{\pm 1}]} H_i^{\alpha^\dagger \otimes (-\phi_C)}(X_C, X_J; R[t^{\pm 1}]^n) \\
=& \sum_{i=0}^3 (-1)^i \rank_{R[t^{\pm 1}]} H_i^{\alpha^\dagger \otimes (-\phi_C)}(X, Y; R[t^{\pm 1}]^n) \\
=& \sum_{i=0}^3 (-1)^i \rank_{R[t^{\pm 1}]} \ol{C}_i(\widetilde{X}, \widetilde{Y}) \otimes_{\Z \pi_C} R[t^{\pm 1}]^n \\
=& n \sum_{i=0}^3 (-1)^i \rank_\Z C_i(X, Y) \\
 =& n \chi(X, Y)
= n \chi(X_C, X_J)
= n \left( \chi(X_C) - \chi(X_J) \right) = 0,
\end{align*}
where we consider the cellular chain complexes $C_*(X, Y)$ and $C_*(\widetilde{X}, \widetilde{Y})$.
Therefore (ii) follows, which completes the proof.
\end{proof}

We are now in a position to prove \cref{prop_injection}.
\begin{proof}[Proof of \cref{prop_injection}]
It follows from \cref{lem_h1,lem_h0,lem_h123}(ii) that
\begin{align*}
\ext_2^{R[t^{\pm 1}]}(H_0^{\alpha^\dagger \otimes (-\phi_C)}(X_C, X_J; R[t^{\pm 1}]), R[t^{\pm 1}]^n) &=0, \\
\ext_1^{R[t^{\pm 1}]}(H_1^{\alpha^\dagger \otimes (-\phi_C)}(X_C, X_J; R[t^{\pm 1}]), R[t^{\pm 1}]^n) &=0, \\
\hom_{R[t^{\pm 1}]}(H_2^{\alpha^\dagger \otimes (-\phi_C)}(X_C, X_J; R[t^{\pm 1}]), R[t^{\pm 1}]^n) &=0.
\end{align*}
Thus all the terms on the line $p+q=2$ of $E^2$ page of the spectral sequence of \cref{lem_ss}(ii) vanish.
Therefore $H_2^{\alpha \otimes \phi_C}(X_C, X_K; R[t^{\pm 1}]^n) = 0$, and the proposition follows.
\end{proof}

\begin{rem}
In the proofs of \cref{prop_surjection,prop_injection} we did not use the injectivity of $\iota_K$.
\end{rem}

\subsection{Divisibility of twisted Alexander polynomials}

Now \cref{thm_division} is a corollary of \cref{prop_surjection,prop_injection}.
For the readers' convenience we recall the statement.
\thmdivision*

\begin{proof}
We may suppose that $\Delta_J^{\alpha \circ \iota_J} \neq 0$, since $g~|~0$ for every $g \in R[t^{\pm 1}]$, so if $\Delta_J^{\alpha \circ \iota_J} = 0$ then the theorem trivially holds.
Let $\Delta_C^{\alpha}(t)$ be the order of $H_1^{\alpha \otimes \phi_C}(X_C; R[t^{\pm 1}]^n)$.
Since orders are multiplicative for a short exact sequence of finitely generated $R[t^{\pm 1}]$-modules, it follows from \cref{prop_surjection,prop_injection} that
\[ \Delta_C^{\alpha} ~|~ \Delta_J^{\alpha \circ \iota_J} ~\text{and}~ \Delta_K^{\alpha \circ \iota_K} ~|~ \Delta_C^{\alpha}, \]
which proves the theorem.
\end{proof}

\begin{rem}
        In order to use the results of this section as a homotopy ribbon concordance obstruction, one needs to find a representation~$\alpha \colon \pi_1(X_C) \to \GL(n, R)$ for a putative concordance~$C$, by defining representations~$\pi_1(X_K) \to \GL(n,R)$ and $\pi_1(X_J) \to \GL(n,R)$ that extend over \emph{any} homotopy concordance exterior. Of course, we can use representations that extend over any topological concordance exterior: while abelian representations recover the classical obstructions~\cite{FP19}, we will illustrate the use of metabelian representations in \cref{sec_proof}, making use of the assumption that the Alexander modules of~$K$ and~$J$ coincide.
\end{rem}

\section{A satellite construction} \label{sec_satellite}

We describe a satellite construction of knots, which produce an example of an infinite family of knots as in \cref{thm_main}.

\subsection{Satellite knots} \label{subsec_satellite}

Let $J$ and $K$ be oriented knots in $S^3$, and let $A$ be a simple closed curve in $X_K$ unknotted in $S^3$.
We think of $K$ also as a simple closed curve in $X_A$.
Let $\phi \colon \partial X_A \to \partial X_J$ be a diffeomorphism sending a longitude of $A$ to a meridian of~$J$, and a meridian of $A$ to a longitude of $J$.
We identify $X_A \cup_\phi X_J$ with $S^3$, and denote  the image of $K$ by $S = S(K, J, A)$.
The knot $S$ inherits an orientation from~$K$.
We call $S$ the \emph{satellite knot with companion $J$, orbit $K$ and axis $A$}.
In other words, $S$ is the satellite knot of $J$ with pattern $K$ in the solid torus $X_A$.
If $A$ bounds a disc in $S^3$ pierced by $w$ strands of $K$, then $S$ is isotopic to the knot obtained by replacing the $w$ trivial strands by $w$ untwisted parallels of a knotted arc whose oriented knot type is $J$ as in Figure~\ref{fig_satellite}.

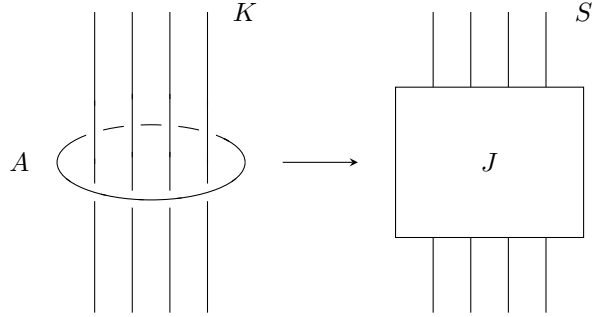
\begin{figure}[htb]
\centering
\begin{tikzpicture}
\begin{knot}[consider self intersections=true, ignore endpoint intersections=false, clip width=4, flip crossing/.list={3,4,5,6}]
\strand (-2.25,0) circle [x radius=1.25, y radius=0.5];
\strand (-3,-2)--(-3,2);
\strand (-2.5,-2)--(-2.5,2);
\strand (-2,-2)--(-2,2);
\strand (-1.5,-2)--(-1.5,2);
\end{knot}
\draw[->,>=stealth] (-.5,0)--(.5,0);
\draw (1,-1) rectangle (3.5,1);
\draw (1.5,-2)--(1.5,-1);
\draw (1.5,1)--(1.5,2);
\draw (2,-2)--(2,-1);
\draw (2,1)--(2,2);
\draw (2.5,-2)--(2.5,-1);
\draw (2.5,1)--(2.5,2);
\draw (3,-2)--(3,-1);
\draw (3,1)--(3,2);
\node (A) at (-4,0) {$A$};
\node (K) at (-1,2) {$K$};
\node (J) at (2.25,0) {$J$};
\node (S) at (3.5,2) {$S$};
\end{tikzpicture}
\caption{The satellite knot $S(K, J, A)$ in the case $w=4$}
\label{fig_satellite}
\end{figure}

Let $\nu_A$ be an open tubular neighborhood of $A$ in $X_K$.
The abelianization map $\pi_J \to \Z$ gives rise to a degree one map $X_J \to \ol{\nu_A}$ that is a diffeomorphism on the boundary.
It defines a map
\[ X_S = \left( X_K \setminus \nu_A \right) \cup_\phi X_J \to \left( X_K \setminus \nu_A \right) \cup_\phi \ol{\nu_A} = X_K. \]
Let $\rho_S \colon \pi_S \to \pi_K$ be the induced epimorphism.
Note that $A$ determines an element $[A] \in \pi_K$ up to conjugation.

For a nonnegative integer $m$, we write $\pi_K^{(m)}$ for the $m$-th derived subgroup of $\pi_K$, defined inductively by
\[ \pi_K^{(0)} = \pi_K ~\text{and}~ \pi_K^{(m+1)} = [\pi_K^{(m)}, \pi_K^{(m)}]. \]
The following lemma was proved in \cite[Theorem 8.1]{Coc04}.

\begin{lem} \label{lem_infection}
Let $S = S(K, J, A)$ be the satellite knot with companion $J$, orbit $K$ and axis $A$.
If $[A] \in \pi_K^{(m)}$, then $\rho_S$ induces an isomorphism
\[ \pi_S / \pi_S^{(m+1)} \to \pi_K / \pi_K^{(m+1)}. \]
\end{lem}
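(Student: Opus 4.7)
The plan is to argue by induction on $m$ using Seifert--van Kampen. Writing $X_S = (X_K \setminus \nu_A) \cup_\phi X_J$ and $X_K = (X_K \setminus \nu_A) \cup \ol{\nu_A}$, we obtain
\[
\pi_S \cong \pi_1(X_K \setminus \nu_A) *_{\pi_1(\partial \nu_A)} \pi_J, \qquad
\pi_K \cong \pi_1(X_K \setminus \nu_A) *_{\pi_1(\partial \nu_A)} \Z,
\]
and under these identifications $\rho_S$ restricts to the identity on the first factor of each and to the abelianization $\pi_J \to \Z$ on the second. In particular $\rho_S$ is surjective, and its kernel is the normal closure in $\pi_S$ of the image of $\pi_J' = [\pi_J, \pi_J]$. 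The base case $m = 0$ then amounts to the induced map on abelianizations $H_1(X_S) \to H_1(X_K)$ being an isomorphism: both groups are infinite cyclic, generated by the respective meridian classes, and $\rho_S$ sends meridian to meridian.

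For the inductive step, assume the result at level $m-1$. Since $[A] \in \pi_K^{(m)} \subseteq \pi_K^{(m-1)}$, the inductive hypothesis supplies an isomorphism $\pi_S/\pi_S^{(m)} \xrightarrow{\sim} \pi_K/\pi_K^{(m)}$. Let $\lambda_A$ denote the longitude of $A$ on $\partial \nu_A$, viewed as an element of $\pi_S$ via the inclusion $X_K \setminus \nu_A \subset X_S$. Since the degree one map $X_S \to X_K$ restricts to the identity on $X_K \setminus \nu_A$, we have $\rho_S(\lambda_A) = [A] \in \pi_K^{(m)}$, and transporting through the inductive isomorphism yields $\lambda_A \in \pi_S^{(m)}$. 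Because $\pi_S^{(m)}$ is normal in $\pi_S$, the normal closure $N \coloneqq \langle\langle \lambda_A\rangle\rangle_{\pi_S}$ is contained in $\pi_S^{(m)}$. The classical knot group $\pi_J$ is normally generated by its meridian $\mu_J$ (all Wirtinger generators are conjugate to $\mu_J$), and $\phi$ identifies $\mu_J$ with $\lambda_A$ in $\pi_S$; hence the image of $\pi_J$ in $\pi_S$ lies in $N$, and the image of $\pi_J'$ lies in $[N, N] \subseteq [\pi_S^{(m)}, \pi_S^{(m)}] = \pi_S^{(m+1)}$. Taking the normal closure gives $\ker \rho_S \subseteq \pi_S^{(m+1)}$, which upgrades the surjection $\pi_S/\pi_S^{(m+1)} \to \pi_K/\pi_K^{(m+1)}$ to an isomorphism.

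The main delicate point is tracking the identifications $\phi(\lambda_A) = \mu_J$ and $\phi(\mu_A) = \lambda_J$, which are needed both to justify describing $\rho_S$ as the abelianization on the $\pi_J$-factor and to conclude the crucial equality $\mu_J = \lambda_A$ in $\pi_S$ that converts the hypothesis on $[A]$ into a constraint on $\pi_J'$. Beyond this bookkeeping, the argument is a short derived-series chase resting only on the elementary observation that a classical knot group is normally generated by a meridian.
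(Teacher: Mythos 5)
Your proof is correct: the identification of $\ker\rho_S$ with the normal closure of the image of $[\pi_J,\pi_J]$ via the van Kampen pushout, the equality $\mu_J=\lambda_A$ in $\pi_S$ coming from the gluing $\phi$, and the induction pushing $\lambda_A$ into $\pi_S^{(m)}$ (hence the image of $\pi_J$ into $\pi_S^{(m)}$ and of $\pi_J'$ into $\pi_S^{(m+1)}$) all check out. The paper does not prove this lemma itself but cites \cite[Theorem 8.1]{Coc04}, and your argument is essentially the one given there, so this is a correct self-contained version of the cited proof.
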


The following lemmas are immediate consequences of the construction.
We record them for use in the proof of \cref{thm_main}.
\begin{lem} \label{lem_seifert}
Let $V$ be a Seifert form of $K$ associated to a Seifert surface $F$, and let $S = S(K, J, A)$ be the satellite knot with companion $J$, orbit $K$ and axis $A$.
If $A$ is disjoint from $F$, then $S$ has a Seifert form isomorphic to $V$.
\end{lem}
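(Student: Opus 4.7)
The plan is to verify that $F$ itself serves as a Seifert surface for $S$ in the new $S^3 = X_A \cup_\phi X_J$, and that its associated Seifert form equals $V$. Since $A \cap F = \emptyset$, the surface $F$ lies entirely inside $X_A$, and in particular inside the new $S^3$. By the construction of the satellite, $K \subset X_A$ is literally the curve $S$ in the new $S^3$, so $F$ is an oriented surface in the new $S^3$ with $\partial F = S$, hence a Seifert surface for $S$. The identification $H_1(F) \to H_1(F)$ is tautological, so it suffices to compare the pairings computed in the two ambient $S^3$'s.

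The heart of the proof is the claim that for any two disjoint oriented simple closed curves $\gamma_1, \gamma_2$ in the interior of $X_A$, the linking numbers computed in the original $S^3$ and in the new $S^3$ agree. To see this, I would pick a Seifert surface $\Sigma$ for $\gamma_1$ in the original $S^3$, make it transverse to $\partial X_A$, and arrange by standard surgery that $\Sigma \cap \nu_A$ consists of $w := \text{lk}(\gamma_1, A)$ parallel meridional disks of $\nu_A$; this rests on the fact that any properly embedded surface in the solid torus $\nu_A$ with boundary on $\partial \nu_A$ is determined up to tube/disk surgery by its boundary class. Write $\Sigma' := \Sigma \cap X_A$, a properly embedded surface in $X_A$ whose boundary is $\gamma_1$ together with $w$ copies of a meridian $\mu_A$ of $A$. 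Under $\phi$, the meridian $\mu_A$ corresponds to a longitude $\ell_J$ of $J$, which bounds a Seifert surface $F_J$ for $J$ inside $X_J$. Gluing $w$ parallel copies of $F_J$ to $\Sigma'$ via $\phi$ produces a Seifert surface $\Sigma''$ for $\gamma_1$ in the new $S^3$. Since $\gamma_2 \subset X_A$ is disjoint from both $\nu_A$ and from $X_J$, we have $\Sigma \cdot \gamma_2 = \Sigma' \cdot \gamma_2 = \Sigma'' \cdot \gamma_2$, proving the claimed equality.

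Applying this equality with $\gamma_1 = \alpha^+$, a positive push-off of a curve $\alpha$ on $F$, and $\gamma_2 = \beta$ a curve on $F$ disjoint from $\alpha$, and then varying $\alpha, \beta$ over a basis of $H_1(F)$, shows that the Seifert form of $S$ with respect to $F$ coincides with $V$. The only mildly subtle point is the construction of $\Sigma''$ in the new $S^3$, but this reduces to the key identification $\phi(\mu_A) = \ell_J$ of the satellite construction, which makes each meridional boundary component of $\Sigma'$ cappable by a parallel copy of a Seifert surface for $J$ living inside $X_J$.
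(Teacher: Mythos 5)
Your proposal is correct and takes essentially the same approach as the paper, whose proof simply asserts that since $A$ is disjoint from $F$, the image of $F$ under $X_A \to X_S$ is a Seifert surface for $S$ with Seifert form isomorphic to $V$; your linking-number comparison in the old and new copies of $S^3$ supplies exactly the verification the paper leaves implicit. One minor remark: requiring $\beta$ to be disjoint from $\alpha$ is unnecessary (and taken literally would omit some entries of the Seifert matrix) --- all that is needed is that $\gamma_1 = \alpha^+$ and $\gamma_2 = \beta$ are disjoint, which holds automatically because $\alpha^+$ is pushed off the surface $F$.
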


\begin{proof}
Since $A$ is disjoint from $F$, we have the image $F'$ of $F$ by the inclusion map $X_A \to X_S$.
We see at once that $F'$ is a Seifert surface of $S$, and its Seifert form is isomorphic to $V$, which proves the lemma.
\end{proof}

\begin{lem} \label{lem_concordance}
Let $S = S(K, J, A)$ be the satellite knot with companion $J$, orbit $K$ and axis $A$.
\begin{enumerate}[label=\emph{(\roman*)}]
  \item If $J$ is a ribbon knot, then $S$ is ribbon concordant to $K$.
  \item If $J$ is a homotopy ribbon knot, then $S$ is homotopy ribbon concordant to $K$.
\end{enumerate}
\end{lem}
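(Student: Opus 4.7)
The plan is to produce the desired concordance by a satellite construction at the level of concordances. Let $D \subset S^3 \times [0,1]$ be a (smooth, resp.~locally flat) concordance from $J$ at time $0$ to the unknot $U$ at time $1$, which is a ribbon (resp.~homotopy ribbon) concordance by hypothesis. Set $Y = X_A \setminus \nu K = S^3 \setminus (\nu A \cup \nu K)$ and regard the trivial concordance $K \times [0,1]$ as sitting inside $X_A \times [0,1]$. Identifying $\partial X_A \times [0,1]$ with $\partial X_J \times [0,1] \subset \partial X_D$ via $\phi \times \mathrm{id}$, we glue $X_A \times [0,1]$ to $S^3 \times [0,1]$ to produce a copy of $S^3 \times [0,1]$ containing the annulus
\[
C' = (K \times [0,1]) \cup_{\phi \times \mathrm{id}} D
\]
which is by construction a concordance from $S(K,J,A) = S$ at time $0$ to $S(K,U,A) = K$ at time $1$. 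Its exterior decomposes as
\[
X_{C'} \;=\; Y \times [0,1] \;\cup_{\partial X_A \times [0,1]}\; X_D .
\]

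For (i), assume $D$ is a ribbon concordance, and choose a Morse function on $D$ without local minima in the projection to $[0,1]$. The $K \times [0,1]$ summand contributes no critical points, while inside a tubular neighborhood of $A \times [0,1]$ the annulus $C'$ consists of parallel copies of (portions of) $D$, each of which inherits a Morse function without local minima. Combining, one obtains, after a small isotopy, a Morse function on $C'$ whose projection to $[0,1]$ has no local minima. Hence $C'$ is a ribbon concordance from $S$ to $K$.

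For (ii), assume $D$ is a homotopy ribbon concordance, so $\iota_J^D \colon \pi_J \twoheadrightarrow \pi_D$ is surjective and $\iota_U^D \colon \pi_U \hookrightarrow \pi_D$ is injective. By van Kampen applied to the decomposition of $X_{C'}$ above, $\pi_{C'} \cong \pi_1(Y) *_{\pi_1(T^2)} \pi_D$, and similarly $\pi_S \cong \pi_1(Y) *_{\pi_1(T^2)} \pi_J$ and $\pi_K \cong \pi_1(Y) *_{\pi_1(T^2)} \pi_U$. Surjectivity of $\iota_S \colon \pi_S \to \pi_{C'}$ follows immediately: $\pi_{C'}$ is generated by $\pi_1(Y)$ together with the image of $\pi_D$, and $\iota_J^D$ maps $\pi_J$ onto $\pi_D$. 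For the injectivity of $\iota_K \colon \pi_K \to \pi_{C'}$, one argues from the pushout description: $\pi_1(T^2)$ injects into $\pi_1(Y)$ (the peripheral torus $\partial X_A$ is incompressible in $Y$), and the induced map on pushouts is injective because $\iota_U^D$ is injective and compatible with the amalgamating maps from $\pi_1(T^2)$. Concretely, in the normal form associated to the pushout, any word in $\pi_K$ mapping trivially to $\pi_{C'}$ can be reduced using only relations already present in $\pi_K$, because the only new relations in $\pi_{C'}$ come from identities in $\pi_D$, and the pre-image of the image of $\pi_U$ in $\pi_D$ is exactly $\pi_U$ by injectivity of $\iota_U^D$.

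The main obstacle, and the only step that requires care, is the injectivity part of (ii): the amalgamating map $\pi_1(T^2) \to \pi_U$ has nontrivial kernel (the longitude of $A$), so one is not dealing with a classical amalgamated free product and must argue the injectivity of pushouts by hand from the injectivity of $\iota_U^D$ and the identification of $X_K$ with the corresponding portion of $\partial X_{C'}$.
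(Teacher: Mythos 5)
Your overall strategy for (ii) --- performing the satellite construction on a concordance from $J$ to the unknot $U$ --- is the same as the paper's, and your van Kampen argument for surjectivity of $\pi_S \to \pi_{C'}$ is fine. However, the two steps that carry the real content are asserted rather than proved. The first is the claim that gluing $X_A \times [0,1]$ to the concordance exterior ``produces a copy of $S^3 \times [0,1]$.'' As written this does not even parse (you glue $X_A\times[0,1]$ to $X_D$, not to $S^3\times[0,1]$), and in the locally flat category it is precisely the nontrivial point: a priori you only obtain a $4$-manifold $W$ with boundary two copies of $S^3$ containing an annulus, and you must identify $W$ with $S^3\times[0,1]$ before you have a concordance. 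The paper does this by showing that the exterior $X_{K\cup A}\times I \cup_{T^2\times I} X_C$ is a $\Z$-homology cobordism with $\pi_1$ normally generated by a meridian, so that capping with $S^1\times D^2\times I$ gives an $h$-cobordism from $S^3$ to itself, and then invoking the topological $h$-cobordism theorem \cite{FQ90}. Your route could instead be repaired by citing topological tubular neighborhoods for the locally flat annulus $D$ \cite{FNOP19}, choosing a trivialization restricting to the zero framings at both ends, and observing that $\phi$ sends the compressing curve of the solid torus $X_A$ (the longitude of $A$) to the meridian of $D$, so that $X_A\times I$ reglues as the tubular neighborhood of $D$ and $W \cong S^3\times I$; but none of this appears in your write-up. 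For (i) the paper sidesteps all of this by performing $w$ parallel copies of the saddle moves of a ribbon concordance from $J$ to $U$ directly in $S^3$; your Morse-function version additionally needs the identification $W\cong S^3\times I$ to be compatible with the projection to $[0,1]$, which you do not address.

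Second, the injectivity of $\pi_K \to \pi_{C'}$, which you yourself flag as the only delicate step, is not actually established. The claim that $\partial X_A$ is incompressible in $Y = X_{K\cup A}$ is false in general: the lemma only assumes $A$ is unknotted in $S^3$, and $A$ may bound a disc in $X_K$. The kernel of $\pi_1(T^2)\to \pi_U$ is generated by the curve glued to the longitude of $U$, namely the meridian of $A$, not the longitude of $A$. Most importantly, since the amalgamating maps into $\pi_U$ and $\pi_D$ are not injective, these pushouts are not amalgamated free products, there is no normal form, and injectivity of $\pi_U\to\pi_D$ does not formally imply injectivity of the induced map $\pi_1(Y)*_{\pi_1(T^2)}\pi_U \to \pi_1(Y)*_{\pi_1(T^2)}\pi_D$: in general a map of pushouts induced by an inclusion on one corner can kill elements of the other corner (for instance when two edge-group elements become conjugate in the larger corner while only one of them dies in the opposite corner). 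Note also that $\pi_U\cong\Z\to\pi_D$ is injective for \emph{any} concordance, so an argument resting solely on that hypothesis cannot be where the content lies. A correct and genuinely easy argument uses the fact that the terminal companion is the unknot: obstruction theory with target $S^1$ produces a map $X_D \to \ol{\nu A}$ agreeing with the standard identifications on $X_U$ and on the gluing torus, hence a map $X_{C'}\to X_K$ restricting to the identity on $X_K\subset\partial X_{C'}$, which gives (split) injectivity. As it stands, your proof of the injectivity condition is a restatement of the claim rather than a proof.
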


\begin{proof}
First we prove (i). A ribbon concordance from $J$ to the unknot can be encoded as a sequence of $n$ saddle moves from $J$ to the unlink with $n+1$ components. If $A$ bounds a disc in $S^3$ intersecting $K$ in $w$ points, then one may find a sequence of $wn$ saddle moves from $S$ to the disjoint union of $K$ with a $wn$-component unlink.
To find them, for each saddle move on $J$, perform $w$ parallel copies of it on~$S$ as in Figure~\ref{fig_saddle}. This converts $S$ to a disjoint union of $K$ and a $wn$-component unlink. Cap off the unlink with $wn$ discs to complete the construction of a ribbon concordance from~$S$ to~$K$. This completes the proof of (i).

\begin{figure}[htb]
\centering
\begin{tikzpicture}
\begin{scope}[shift={(-3,0)}]
\draw[->,>=stealth] (-162:2) to [out=72, in=-72] (162:2);
\draw[->,>=stealth] (-144:2) to [out=72, in=-72] (144:2);
\draw[->,>=stealth] (-126:2) to [out=72, in=-72] (126:2);
\draw[->,>=stealth] (-108:2) to [out=72, in=-72] (108:2);
\draw[<-,>=stealth] (-72:2) to [out=108, in=-108] (72:2);
\draw[<-,>=stealth] (-54:2) to [out=108, in=-108] (54:2);
\draw[<-,>=stealth] (-36:2) to [out=108, in=-108] (36:2);
\draw[<-,>=stealth] (-18:2) to [out=108, in=-108] (18:2);
\end{scope}
\draw[->,>=stealth] (-.5,0)--(.5,0);
\begin{scope}[shift={(3,0)}]
\draw[->,>=stealth] (-108:2) to [out=18, in=162] (-72:2);
\draw[->,>=stealth] (-126:2) to [out=18, in=162] (-54:2);
\draw[->,>=stealth] (-144:2) to [out=18, in=162] (-36:2);
\draw[->,>=stealth] (-162:2) to [out=18, in=162] (-18:2);
\draw[->,>=stealth] (18:2) to [out=-162, in=-18] (162:2);
\draw[->,>=stealth] (36:2) to [out=-162, in=-18] (144:2);
\draw[->,>=stealth] (54:2) to [out=-162, in=-18] (126:2);
\draw[->,>=stealth] (72:2) to [out=-162, in=-18] (108:2);
\end{scope}
\end{tikzpicture}
\caption{Parallel copies of a saddle move in the case $w=4$}
\label{fig_saddle}
\end{figure}
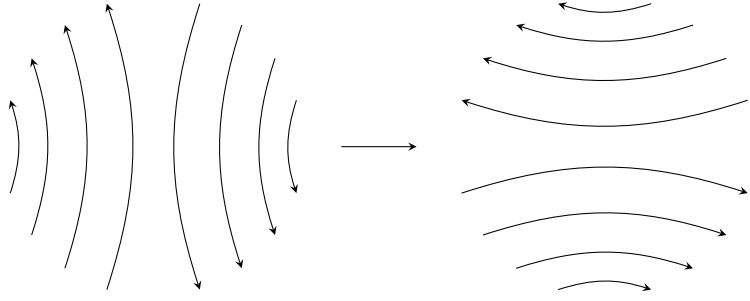

Now we prove (ii).
Since $J$ is a homotopy ribbon knot,  $J$ is homotopy ribbon concordant to the unknot $U$, via a homotopy ribbon concordance $C$. So the exterior $X_C$ has boundary $X_J \cup S^1 \times S^1 \times I \cup -X_U$.
Recall that $X_{S(K,J,A)} = X_{K  \cup A} \cup_{\partial \ol{\nu A}} X_J$ and that, trivially,  $X_K =  X_{K  \cup A} \cup_{\partial\ol{\nu A}} X_U$.
Glue to form:
\[Y:= X_{K \cup A} \times I  \cup_{\partial \ol{\nu A} \times I}  X_C\]
along the $S^1 \times S^1 \times I$ part of the boundary of $X_C$.  Note that $\partial Y = X_S \cup -X_K$ and $Y$ is a $\Z$--homology cobordism with $\pi_1(Y)$ normally generated by a meridian of $K$ or of $S$.
Therefore $Y \cup S^1 \times D^2 \times I$ is an 4-dimensional $h$-cobordism from $S^3$ to itself. The 5-dimensional $h$-cobordism theorem~\cite[Theorem~7.1A]{FQ90} may be applied to deduce that $Y \cup S^1 \times D^2 \times I \cong S^3 \times I$, and therefore $Y$ is a concordance exterior.  The image of $S^1 \times {0} \times I$ under this homeomorphism gives rise to a concordance from $S(K,J,A)$ to~$K$.
The fact that $X_C$ is a homotopy ribbon concordance exterior easily implies that $Y$ is too. This completes the proof of (ii) and of the lemma.
\end{proof}

\subsection{A formula for twisted Alexander polynomials}

The following proposition is proved by reinterpreting twisted Alexander polynomials as Reidemeister torsion (see \cite{KL99, Ki96}) and using a surgery formula for Reidemeister torsion.
For a proof of a more general statement we refer the reader to \cite[Lemma 7.1]{CF10}.

\begin{prop} \label{prop_satellite}
Let $S = S(K, J, A)$ be the satellite knot with companion $J$, orbit $K$ and axis $A$, and let $\alpha \colon \pi_K \to GL(n, \Z)$ be a representation.
Suppose that $A$ is null-homologous in $X_K$.
Then the following holds:
\begin{equation} \label{eq_satellite}
\Delta_S^{\alpha \circ \rho_S}(t) \doteq \Delta_K^\alpha(t) \prod_{i=1}^n \Delta_J(z_i),
\end{equation}
where $z_1, \dots, z_n$ are the eigenvalues of $\alpha([A])$.
\end{prop}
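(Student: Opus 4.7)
The plan is to reinterpret twisted Alexander polynomials as Reidemeister torsions (as in \cite{KL99,Ki96}) and apply multiplicativity of torsion to the natural Mayer--Vietoris decomposition
$$X_S = Y\cup_T X_J, \qquad Y := X_K\setminus\nu_A, \qquad T := \partial\overline{\nu_A} \cong \partial X_J,$$
paralleled by the trivial decomposition $X_K = Y \cup_T \overline{\nu_A}$.

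First I would analyze the restrictions of the representation $\Phi := (\alpha\circ\rho_S)\otimes \phi_S$ to the pieces. On $\pi_1(Y)$ the map $\rho_S$ is the natural inclusion into $\pi_K$ and $\phi_S = \phi_K$, so $\Phi|_Y$ coincides with $(\alpha\otimes\phi_K)|_Y$. On $\pi_1(X_J)$, the map $\rho_S$ factors through the cyclic subgroup $\langle [A]\rangle \subset \pi_K$, so $\alpha\circ\rho_S|_{\pi_1(X_J)}$ is abelian and sends a meridian of $J$ to $\alpha([A])$; the hypothesis that $A$ is null-homologous in $X_K$ forces $\phi_S|_{\pi_1(X_J)} = 0$, since under the gluing $\phi$ a meridian of $J$ is identified with a longitude of $A$. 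In particular, $\Phi|_{X_J}$ is a purely abelian representation with meridional image $\alpha([A])$ and no $t$-dependence, and the restrictions $\Phi|_T$ and $(\alpha\otimes\phi_K)|_T$ also agree.

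Second, multiplicativity of Reidemeister torsion along the two decompositions above, combined with the fact that $\Phi$ and $\alpha\otimes\phi_K$ induce the same representations on $\pi_1(Y)$ and on $\pi_1(T)$, yields after cancellation
$$\frac{\tau(X_S;\Phi)}{\tau(X_K;\alpha\otimes\phi_K)} \doteq \frac{\tau(X_J;\Phi|_{X_J})}{\tau(\overline{\nu_A};\alpha\otimes\phi_K)}.$$
Invoking the standard Kirk--Livingston/Kitano formula $\tau(X_L;\beta\otimes\phi_L) \doteq \Delta_L^\beta(t)/\det(\beta(\mu_L)t - I)$ together with $\rho_S(\mu_S) = \mu_K$, the meridional denominators on the left-hand side cancel, so the left-hand side equals $\Delta_S^{\alpha\circ\rho_S}(t)/\Delta_K^\alpha(t)$ up to units.

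Third, to evaluate the right-hand side, I would exploit that $\Phi|_{X_J}$ has no $t$-dependence: the twisted Alexander module of $X_J$ is obtained from the classical Alexander module $H_1(X_J;\Z[s^{\pm 1}])$, presented by $sV - V^T$ for any Seifert matrix $V$ of $J$, by substituting $s\mapsto \alpha([A])$ in the Kronecker-product sense; the solid-torus contribution is a simple function of $\alpha([A]) - I$. Diagonalizing $\alpha([A])$ over an algebraic closure as $P\operatorname{diag}(z_1,\ldots,z_n)P^{-1}$, the relevant determinants factor as $\prod_{i=1}^n \det(z_iV - V^T) = \prod_{i=1}^n \Delta_J(z_i)$ and $\prod_{i=1}^n(z_i - 1)$ respectively, with the latter cancelling the remaining normalization mismatch; Galois invariance of the final product ensures that it lies in $\Z[t^{\pm 1}]$, yielding the claimed formula. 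The main obstacle is the careful bookkeeping of normalization, boundary-torus, and solid-torus factors relating Reidemeister torsions to Alexander orders; this is essentially the computation carried out in greater generality in \cite[Lemma 7.1]{CF10}, from which the formula can alternatively be extracted directly.
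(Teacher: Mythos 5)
Your proposal is correct and takes essentially the same approach as the paper, whose proof consists precisely of reinterpreting twisted Alexander polynomials as Reidemeister torsion (citing \cite{KL99, Ki96}) and invoking a surgery/gluing formula for torsion, with the general computation delegated to \cite[Lemma 7.1]{CF10}. Your write-up simply fills in the details of that outline: the identification of the restricted representations on the pieces, the cancellation of the $Y$- and boundary-torus contributions, and the evaluation of the remaining factors at the eigenvalues of $\alpha([A])$.
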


Recall that the resultant of two polynomials
\[ f(t) = c \prod_{i=1}^m \left( t - \alpha_i \right),~ g(t) = d \prod_{j=1}^n \left( t - \beta_j \right) \]
in $\C[t]$, where $c, d, \alpha_1, \dots, \alpha_m, \beta_1, \dots, \beta_n \in \C$, is given by
\[ \res(f, g) = c^n d^m \prod_{1 \leq i \leq m,~ 1 \leq j \leq n} \left( \alpha_i - \beta_j \right). \]

\begin{rem} \label{rem_resultant}
In \cref{prop_satellite}, taking a representative $\Delta_J \in \Z[t]$, we can rewrite \cref{eq_satellite} as
\[ \Delta_S^{\alpha \circ \rho_S}(t) \doteq \frac{1}{c^n} \res(\Delta_J, f_{\alpha([A])}) \Delta_K^\alpha(t), \]
where $c$ is the leading coefficient of $\Delta_J(t)$ and $f_{\alpha([A])}(t)$ is the characteristic polynomial of $\alpha([A])$.
\end{rem}

\section{Metabelian Alexander polynomials} \label{sec_MAP}

We introduce twisted Alexander polynomials associated to certain metabelian representations, which are useful for applications of \cref{thm_division}.
An important property of these invariants is that they are always nonzero.
(See \cite{HKL10} for a similar construction of twisted Alexander polynomials.)

\subsection{Metabelian representations}

Let $K$ be an oriented knot in $S^3$ and let $r$ be a positive integer.
Let $X_{K, r}$ and $X_{K, \infty}$ be the $r$-fold cyclic cover and the infinite cyclic cover of $X_K$ respectively, and let $\Sigma_{K, r}$ be the $r$-fold cyclic branched cover of $S^3$ along $K$.
We set $H_K = H_1(X_{K, \infty}; \Z)$.
The abelianization map $\phi_K \colon \pi_K \to \Z$ defines an action of $\Z$ on $X_{K, \infty}$, and we write $t$ for the automorphism on $H_K$ corresponding to $1 \in \Z$.
Then $H_K$ has the structure of a $\Z[t^{\pm 1}]$-module.

Let $p$ be a prime, and  let $\F_p$ be the field of order $p$.
We identify
\[ \pi_K / \pi_K^{(2)} = \pi_K^{(1)} / \pi_K^{(2)} \rtimes \pi_K / \pi_K^{(1)} = H_K \rtimes \Z. \]
We set $\Gamma_K^{r, p}$ to be the quotient group
\[\Gamma_K^{r, p} := \left( H_K \otimes_{\Z[t^{\pm 1}]} \F_p[t^{\pm 1}] / (t^r-1) \right) \rtimes \Z / r \Z, \]
and we write \[\ol{\alpha}_K^{r, p} \colon \pi_K / \pi_K^{(2)} \to \GL(|\Gamma_K^{r, p}|, \Z)\] for the composition of the quotient map and the regular representation $\Gamma_K^{r, p} \to \GL(|\Gamma_K^{r, p}|, \Z)$, fixing a basis of the free abelian group $\Z \Gamma_K^{r, p}$.
We define a representation \[\alpha_K^{r, p} \colon \pi_K \to \GL(|\Gamma_K^{r, p}|, \Z)\] to be the composition of the quotient map $\pi_K \to \pi_K / \pi_K^{(2)}$ and $\ol{\alpha}_K^{r, p}$, and set
\[ \Delta_K^{r, p}(t) = \Delta_K^{\alpha_K^{r, p}}(t) \in \Z[t^{\pm 1}]. \]
Since it is invariant under conjugation of $\alpha_K^{r, p}$ as in \cref{rem_TAP}~(2), it does not depend on the choice of a basis of $\Z \Gamma_K^{r, p}$.

It is straightforward to see that the composition $X_{K, \infty} \to \Sigma_{K, r}$ of the projection $X_{K, \infty} \to X_{K, r}$ and the inclusion map $X_{K, r} \to \Sigma_{K, r}$ induces an isomorphism
\[ H_K \otimes_{\Z[t^{\pm 1}]} \F_p[t^{\pm 1}] / (t^r-1) \cong H_1(\Sigma_{K, r}; \F_p). \]
Since $\ker \phi_K = \pi_K^{(1)}$ and $\pi_K^{(1)} / \pi_K^{(2)} = H_K$, we see that $\alpha_K^{r, p} |_{\ker \phi_K}$ factors through this $p$-group.

The following theorem is proved in~\cite[Theorem 1.2]{Liv02}.
This shows that if $\Delta_K \neq 1$, then there exist some $r$ and $p$ such that $\alpha_K^{r, p} |_{\ker \phi_K}$ is nontrivial.

\begin{thm} \label{thm_Liv02}
The $3$-manifold $\Sigma_{K, r}$ is a homology $3$-sphere for every $r$ if and only if $\Delta_K(t) = 1$.
\end{thm}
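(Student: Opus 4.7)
The plan is to reduce this to Fox's classical formula
\[
|H_1(\Sigma_{K,r};\Z)| \;=\; \prod_{i=1}^{r-1}|\Delta_K(\zeta_r^i)|, \qquad \zeta_r = e^{2\pi i/r},
\]
which holds whenever no nontrivial $r$-th root of unity is a root of $\Delta_K$; in the remaining case $H_1(\Sigma_{K,r};\Z)$ has positive rank.  The direction $(\Leftarrow)$ is then immediate: if $\Delta_K = 1$, every factor on the right equals $1$, so every $\Sigma_{K,r}$ is an integral homology sphere.

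For $(\Rightarrow)$ I would argue the contrapositive.  Assume $\Delta_K \neq 1$; I seek some $r$ with $H_1(\Sigma_{K,r};\Z)\neq 0$.  Pick a root $\alpha\in\bar\Q$ of $\Delta_K$, and split into two subcases.  If $\alpha$ is a root of unity of order $r$, then $\alpha = \zeta_r^j$ for some $j$ and $\Delta_K(\zeta_r^j)=0$, so $H_1(\Sigma_{K,r};\Z)$ is of positive rank and in particular nontrivial.  Otherwise no root of $\Delta_K$ is a root of unity; applying Kronecker's theorem to each irreducible factor of $\Delta_K$ over $\Q$, some root $\beta$ of $\Delta_K$ must lie strictly outside the unit circle, so the Mahler measure satisfies $M(\Delta_K)>1$.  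Writing $|H_1(\Sigma_{K,r};\Z)| = |c|^r\,\prod_j|\alpha_j^r - 1|$, where $c$ is the leading coefficient of $\Delta_K$ and the $\alpha_j$ are its roots, the factors coming from $|\alpha_j|>1$ grow like $|\alpha_j|^r$ while the remaining factors are bounded below by an inverse polynomial in $r$ (using that the $\alpha_j$ on the unit circle are not roots of unity, by a Baker-type lower bound).  Thus $|H_1(\Sigma_{K,r};\Z)|\to\infty$ as $r\to\infty$, and any sufficiently large $r$ does the job.

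The main obstacle is the second subcase of $(\Rightarrow)$: showing that the cyclic resultants of $\Delta_K$ with $t^r-1$ cannot all equal $\pm 1$.  I would either sidestep the delicate analytic estimate by invoking the Silver--Williams asymptotic
$
\tfrac{1}{r}\log|H_1(\Sigma_{K,r};\Z)| \;\longrightarrow\; \log M(\Delta_K)>0,
$
or carry out the elementary bound sketched above.  Either way, the nontriviality input is Kronecker's characterization of algebraic integers with all conjugates in the closed unit disk as roots of unity, applied to the irreducible factors of $\Delta_K$.
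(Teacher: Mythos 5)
The paper itself does not prove this statement: it is quoted from Livingston \cite[Theorem 1.2]{Liv02}, so there is no internal argument to compare against. Your route --- Fox's formula $|H_1(\Sigma_{K,r};\Z)| = \prod_{i=1}^{r-1}|\Delta_K(\zeta_r^i)|$, with positive first Betti number exactly when some nontrivial $r$-th root of unity is a root of $\Delta_K$, combined with growth of the cyclic resultants when the Mahler measure exceeds $1$ (Silver--Williams, resting on Gelfond/Baker lower bounds) --- is the standard proof strategy in the literature, and the $(\Leftarrow)$ direction as well as the subcase in which $\Delta_K$ has a root of unity among its roots are handled correctly.

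One step of the remaining subcase fails as written. You apply Kronecker's theorem to the irreducible factors of $\Delta_K$ over $\Q$ to conclude that, if no root of $\Delta_K$ is a root of unity, then some root lies strictly outside the unit circle. Kronecker's theorem concerns algebraic integers, i.e.\ it requires monic integer polynomials, and Alexander polynomials of knots need not be monic. For instance $2t^2-3t+2$ is symmetric with value $1$ at $t=1$, hence is the Alexander polynomial of some knot, yet its roots $(3\pm i\sqrt{7})/4$ have modulus exactly $1$ and are not roots of unity; so the claimed root outside the unit circle need not exist. The conclusion you actually need, $M(\Delta_K)>1$, is still true and easy to repair: if the leading coefficient $c$ satisfies $|c|\geq 2$ then $M(\Delta_K)\geq 2$ outright, while if $|c|=1$ your Kronecker argument is valid. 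With that fix, either the Silver--Williams limit $\tfrac1r\log|H_1(\Sigma_{K,r};\Z)|\to\log M(\Delta_K)$ or your elementary estimate completes the argument --- but in the elementary version the exponential growth must be allowed to come from the factor $|c|^r$ in your displayed formula (as in the example above, where no root lies off the unit circle), not only from roots of modulus greater than $1$. A minor point of logic: the case division should be ``some root of $\Delta_K$ is a root of unity'' versus ``no root is,'' rather than being phrased in terms of a single chosen root $\alpha$.
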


\begin{rem}
Livingston~\cite[Theorem 1.2]{Liv02} also showed that $\Sigma_{K, r}$ is a homology $3$-sphere for every prime power $r$ if and only if every nontrivial irreducible factor of $\Delta_K(t)$ is a cyclotomic polynomial $\Phi_n(t)$ for some $n$ divisible by three distinct primes.
\end{rem}

The following lemma will be useful in \cref{sec_proof}.

\begin{lem} \label{lem_MAP}
Let $J$ and $K$ be oriented knots in $S^3$ with an isomorphism $\psi \colon \pi_J / \pi_J^{(2)} \to \pi_K / \pi_K^{(2)}$.
Then for any positive integer $r$ and any prime number $p$,
\[ \Delta_J^{\psi^* \alpha_K^{r, p}}(t) \doteq \Delta_J^{r, p}(t), \]
where $\psi^* \alpha_K^{r, p}$ is the composition
\[\psi^* \alpha_K^{r, p} \colon \pi_J \to \pi_J / \pi_J^{(2)} \xrightarrow{\psi} \pi_K / \pi_K^{(2)} \xrightarrow{\ol{\alpha}_K^{r, p}} \GL(|\Gamma_K^{r, p}|, \Z).\]
\end{lem}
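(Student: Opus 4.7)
The plan is to show that $\alpha_J^{r,p}$ and $\psi^*\alpha_K^{r,p}$ are conjugate as representations of $\pi_J$ into $\GL(|\Gamma_K^{r,p}|,\Z)$; the conclusion then follows from \cref{rem_TAP}(2). The key step is to promote $\psi$ to a group isomorphism $\bar\psi\colon\Gamma_J^{r,p}\to\Gamma_K^{r,p}$ through which both representations factor.

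First I would check that $\psi$ descends to such a $\bar\psi$, fitting into a commutative square with the natural quotients $\pi_J/\pi_J^{(2)}\to\Gamma_J^{r,p}$ and $\pi_K/\pi_K^{(2)}\to\Gamma_K^{r,p}$. Being a group isomorphism, $\psi$ sends commutator subgroups to commutator subgroups, so $\psi(H_J)=H_K$, and on the abelianization quotient $\Z$ it induces multiplication by some $\varepsilon\in\{\pm1\}$. Expanding the conjugation relation inside the semidirect product $H\rtimes\Z$ yields $\psi|_{H_J}(t\cdot h)=t^{\varepsilon}\cdot\psi|_{H_J}(h)$, so that $\psi|_{H_J}$ is a $\Z[t^{\pm 1}]$-module isomorphism, possibly post-composed with the involution $t\mapsto t^{-1}$ on the target. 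In either case, tensoring with $\F_p[t^{\pm 1}]/(t^r-1)$ and combining with the induced isomorphism $\Z/r\Z\to\Z/r\Z$ produces $\bar\psi$, where in the $\varepsilon=-1$ case one uses that $t\mapsto t^{-1}$ is an actual group automorphism of $\F_p[t^{\pm 1}]/(t^r-1)$.

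Next I would invoke the naturality of the regular representation up to conjugation by permutation matrices: for any isomorphism $\bar\psi\colon G\to H$ of finite groups, the composition of $\bar\psi$ with the regular representation of $H$ (with basis $H$) differs from the regular representation of $G$ (with basis $G$) by conjugation by the permutation matrix associated to the set-theoretic bijection $\bar\psi\colon G\to H$. Applied to $\bar\psi\colon\Gamma_J^{r,p}\to\Gamma_K^{r,p}$, this shows that $\ol{\alpha}_J^{r,p}$ and $\ol{\alpha}_K^{r,p}\circ\psi$ are conjugate as representations $\pi_J/\pi_J^{(2)}\to\GL(|\Gamma_J^{r,p}|,\Z)$. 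Precomposing with $\pi_J\to\pi_J/\pi_J^{(2)}$ preserves conjugation, hence $\alpha_J^{r,p}$ and $\psi^*\alpha_K^{r,p}$ are conjugate, and \cref{rem_TAP}(2) finishes the argument.

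The only mildly delicate point is the first step, where one must keep track of the sign $\varepsilon$; but since $t\mapsto t^{-1}$ is a genuine automorphism of the relevant coefficient ring, this bookkeeping never obstructs the construction of $\bar\psi$. Everything else is formal: it is merely a comparison of two avatars of the same regular representation viewed through the quotient map $\pi_J\to\pi_J/\pi_J^{(2)}\to\Gamma_J^{r,p}\cong\Gamma_K^{r,p}$.
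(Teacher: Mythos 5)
Your argument is correct and follows essentially the same route as the paper's proof: $\psi$ descends to an isomorphism $\Gamma_J^{r,p}\to\Gamma_K^{r,p}$, the compatibility of the regular representations then makes $\ol{\alpha}_J^{r,p}$ and $\ol{\alpha}_K^{r,p}\circ\psi$ conjugate (by the permutation matrix of the bijection), and \cref{rem_TAP}(2) concludes. The only difference is that you spell out details the paper leaves implicit, namely the bookkeeping of the sign $\varepsilon=\pm1$ on the $\Z$-factor and the explicit permutation-matrix conjugacy, both of which you handle correctly.
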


\begin{proof}
The groups $\Gamma_J^{r, p}$ and $\Gamma_K^{r, p}$ are quotients of $\pi_J / \pi_J^{(2)}$ and $\pi_K / \pi_K^{(2)}$ respectively, and $\psi$ induces an isomorphism $\Gamma_J^{r, p} \to \Gamma_K^{r, p}$.
Since the actions of $\pi_J / \pi_J^{(2)}$ on $\Gamma_J^{r, p}$ and $\pi_K / \pi_K^{(2)}$ on $\Gamma_K^{r, p}$ are compatible with $\psi$ and the induced isomorphism, $\ol{\alpha}_K^{r, p} \circ \psi$ and $\ol{\alpha}_J^{r, p}$ are conjugate, and so are $\psi^* \alpha_K^{r, p}$ and $\alpha_J^{r, p}$.
Therefore the lemma follows from \cref{rem_TAP}~(2).
\end{proof}

\subsection{Nonvanishing of metabelian Alexander polynomials}

The following theorem is significant in applying \cref{thm_division}.
\begin{thm} \label{thm_non-vanishing}
For any positive integer $r$ and any prime number $p$, $\Delta_K^{r, p} \neq 0$.
\end{thm}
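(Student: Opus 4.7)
My plan is to show $\Delta_K^{r,p}(t) \neq 0$ by proving the equivalent statement that $H_1^{\alpha_K^{r,p} \otimes \phi_K}(X_K; \Q(t)^n) = 0$ for $n = |\Gamma_K^{r,p}|$. The key observation is that $\alpha_K^{r,p}$ is the regular representation of the finite group $\Gamma_K^{r,p}$ pulled back via $q \colon \pi_K \to \Gamma_K^{r,p}$, so the representation $\alpha_K^{r,p}\otimes\phi_K$ is induced from the one-dimensional representation of $\ker q$ determined by $\phi_K|_{\ker q}$. Shapiro's lemma therefore identifies
\[
H_*^{\alpha_K^{r,p} \otimes \phi_K}(X_K; \Q(t)^n) \cong H_*(\bar X; \Q(t)_\psi),
\]
where $\bar X \to X_K$ is the finite cover corresponding to $\ker q$ and $\psi = \phi_K|_{\pi_1(\bar X)}$. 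Thus the theorem reduces to showing that the classical Alexander polynomial of the compact $3$-manifold $\bar X$ with respect to $\psi$ is nonzero, equivalently that $H_1(\bar X; \Q[t^{\pm 1}]_\psi) \cong H_1(\bar X^\infty; \Q)$ is $\Q[t^{\pm 1}]$-torsion, where $\bar X^\infty$ denotes the infinite cyclic cover of $\bar X$ with respect to $\psi$.

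Next, I would identify $\bar X^\infty \to X_K^\infty$ as a finite regular cover with deck group $G := H_1(\Sigma_{K,r}; \F_p)$. Its fundamental group $D := \ker q \cap \pi_K^{(1)}$ satisfies $\pi_K^{(1)}/D \cong H_K \otimes_{\Z[t^{\pm 1}]} \F_p[t^{\pm 1}]/(t^r - 1) = G$, which is finite, and $\psi$ is nontrivial since $\mu^r \in \ker q$ satisfies $\phi_K(\mu^r) = r$. Since $\Delta_K(t) \neq 0$ by the classical theory, the Alexander module $H_1(X_K^\infty; \Q)$ is a finite-dimensional $\Q$-vector space. I would then deduce the same for $H_1(\bar X^\infty; \Q)$ via the Hochschild--Serre spectral sequence for the extension $1 \to D \to \pi_K^{(1)} \to G \to 1$ with trivial $\Q$ coefficients, which degenerates at $E_2$ in characteristic zero (since $G$ is finite) and yields $H_1(\bar X^\infty; \Q)_G \cong H_1(X_K^\infty; \Q)$; combined with the finite generation of $H_1(\bar X^\infty; \Q)$ as a $\Q[t^{\pm 1}]$-module coming from the compactness of $\bar X$, this should force $H_1(\bar X^\infty; \Q)$ itself to be finite-dimensional over $\Q$, hence $\Q[t^{\pm 1}]$-torsion.

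The main obstacle is the last finite-dimensionality step: the Hochschild--Serre argument immediately yields finiteness of the $G$-coinvariants, but to pass to finiteness of the full module one must analyze the interplay between the $\Q[G]$-action (deck transformations of $\bar X^\infty \to X_K^\infty$) and the $\Q[t^{\pm 1}]$-action (deck transformations of $\bar X^\infty \to \bar X$) on $H_1(\bar X^\infty; \Q)$. These actions do not commute in general, since the full deck group $\pi_K/D$ of $\bar X^\infty \to X_K$ is a semidirect extension of $G$ by $\Z$ with nontrivial conjugation action; handling the resulting module structure carefully, using semisimplicity of $\Q[G]$ in characteristic zero to decompose into isotypic components and bounding each one via the $\Q[t^{\pm 1}]$-finite-generation, is the technical heart of the argument.
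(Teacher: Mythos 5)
Your reduction via Shapiro's lemma to the finite cover $\bar X$ and its infinite cyclic cover $\bar X^\infty$ is correct, and the Hochschild--Serre/transfer step does give finiteness of the coinvariants $H_1(\bar X^\infty;\Q)_G \cong H_1(X_K^\infty;\Q)$. But the step you yourself flag as the technical heart is a genuine gap, and the route you propose for closing it cannot work. First, the implication ``coinvariants finite-dimensional $+$ finitely generated over $\Q[s^{\pm 1}]$ $\Rightarrow$ finite-dimensional over $\Q$'' is simply false as algebra: take $G = \Z/p$ acting on $M = \Q(\zeta_p)[s^{\pm 1}]$ by multiplication by $\zeta_p$ (a $\Q[s^{\pm 1}]$-linear action); then $M$ is finitely generated over $\Q[s^{\pm 1}]$ and $M_G = M^G = 0$, yet $M$ is infinite-dimensional over $\Q$. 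Semisimplicity of $\Q[G]$ only tells you that the trivial isotypic component of $H_1(\bar X^\infty;\Q)$ is controlled by $H_1(X_K^\infty;\Q)$; the nontrivial isotypic pieces are invisible to the coinvariants, and finite generation over $\Q[s^{\pm 1}]$ does not force them to be torsion. Second, and more fundamentally, nothing in your sketch uses that the deck group $G$ is a $p$-group: as written, the argument would prove nonvanishing of the twisted Alexander polynomial for \emph{every} representation factoring through a finite group, which is false --- by work of Friedl and Vidussi on detecting fiberedness, every nonfibered knot admits a representation with finite image whose twisted Alexander polynomial vanishes. (A small additional point: the two actions you worry about actually do commute, since the deck group of $\bar X^\infty \to \bar X$ is generated by the class of $\mu^r$, which is central in $\pi_K/D$; this cleans up your setup but does not close the gap.)

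The $p$-group hypothesis has to enter where it has teeth, and in characteristic $0$ it has none. The paper's proof applies \cref{thm_FP12}, whose hypothesis is exactly that $\alpha_K^{r,p}|_{\ker\phi_K}$ factors through a $p$-group, via a chain-homotopy-lifting argument for the pair $(X_K,\mu)$, concluding directly that $H_*^{\alpha_K^{r,p}\otimes\phi_K}(X_K;\Q(t)^n) = 0$. If you want to salvage your covering-space picture, the standard way is to work in characteristic $p$: for a $p$-group $G$ the augmentation ideal of $\F_p[G]$ is nilpotent, so a filtration argument shows that $H_1(\bar X^\infty;\F_p)$ is finite-dimensional because $H_1(X_K^\infty;\F_p)$ is; one then needs a Casson--Gordon-type lemma to pass from finite $\F_p$-dimension to finite $\Q$-dimension for homology of infinite cyclic covers of finite complexes. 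That last transition is a genuine additional input which your characteristic-zero isotypic decomposition does not supply.
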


For the proof we need the following theorem by the first and fifth authors~\cite[Theorem 3.1]{FP12}.

\begin{thm} \label{thm_FP12}
Let $\pi$ be a group, $p$ a prime and $f \colon M \to N$ a morphism of projective right $\Z \pi$-modules such that the homomorphism
\[ M \otimes_{\Z \pi} \F_p \to N \otimes_{\Z \pi} \F_p \]
induced by $f$ is injective.
Let $\phi \colon \pi \to H$ be an epimorphism onto a torsion-free abelian group and let $\alpha \colon \pi \to \GL(n, Q)$ be a representation over a field $Q$ of characteristic~$0$.
If $\alpha|_{\ker \phi}$ factors through a $p$-group, then the homomorphism
\[ M \otimes_{\Z \pi} Q(H)^n \to N \otimes_{\Z \pi} Q(H)^n \]
induced by $f$ is also injective, where $Q(H)$ is the quotient field of the group ring of $H$ over $Q$.
\end{thm}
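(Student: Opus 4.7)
The plan is to reformulate the nonvanishing of $\Delta_K^{r,p}$ as the vanishing of a twisted homology group over $\Q(t)$, and then apply \cref{thm_FP12}. Since $\Delta_K^{r,p}$ is the order of the twisted Alexander module over the UFD $\Z[t^{\pm1}]$, the polynomial is nonzero precisely when this module is $\Z[t^{\pm1}]$-torsion, equivalently when $H_1^{\alpha_K^{r,p}\otimes\phi_K}(X_K;\Q(t)^n)=0$, where $n=|\Gamma_K^{r,p}|$. I would work with a finite $2$-complex $Y$ homotopy equivalent to $X_K$, whose cellular chain complex takes the form $C_2\xrightarrow{\partial_2}C_1\xrightarrow{\partial_1}C_0$ with each $C_i$ a free right $\Z\pi_K$-module. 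Since $\chi(X_K)=0$, the alternating sum of $\Q(t)$-dimensions of $C_i\otimes_{\Z\pi_K}\Q(t)^n$ vanishes, giving $\dim_{\Q(t)}H_1=\dim_{\Q(t)}H_0+\dim_{\Q(t)}H_2$; so it suffices to show that both $H_0$ and $H_2$ vanish.

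Vanishing of $H_0$ is immediate from the meridian: if $\phi_K(\mu)=1$, then $\alpha_K^{r,p}(\mu)\cdot t-I$ acts as a $\Q(t)$-linear endomorphism of $\Q(t)^n$ whose determinant, viewed as a polynomial in $t$, has constant term $(-1)^n\neq 0$, so this operator is surjective and $H_0=0$.

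For $H_2$, since $Y$ is $2$-dimensional it is enough to prove that $\partial_2\otimes_{\Z\pi_K}\Q(t)^n$ is injective. I would invoke \cref{thm_FP12} with $\pi=\pi_K$, $\phi=\phi_K$, $H=\Z$, $\alpha=\alpha_K^{r,p}$, and $f=\partial_2$. The first hypothesis is that $\partial_2\otimes_{\Z\pi_K}\F_p$ is injective; but this tensoring recovers the ordinary $\F_p$-cellular boundary map of $Y\simeq X_K$, whose kernel is $H_2(X_K;\F_p)$, and this vanishes because $X_K$ is a compact $3$-manifold with nonempty boundary. The second hypothesis is that $\alpha_K^{r,p}|_{\ker\phi_K}$ factors through a $p$-group; by construction, it factors through the image of $\pi_K^{(1)}$ in $\Gamma_K^{r,p}$, namely $H_K\otimes_{\Z[t^{\pm1}]}\F_p[t^{\pm1}]/(t^r-1)$. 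Since the classical Alexander module $H_K$ is a finitely generated torsion $\Z[t^{\pm1}]$-module, this tensor product is a finitely generated module over the finite characteristic-$p$ ring $\F_p[t^{\pm1}]/(t^r-1)$, hence is a finite abelian $p$-group. Thus \cref{thm_FP12} applies and yields $H_2=0$.

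The only genuinely nontrivial step is the verification of the $p$-group hypothesis of \cref{thm_FP12}, which is precisely the conceptual reason that $\alpha_K^{r,p}$ is built from the quotient $H_K\otimes_{\Z[t^{\pm1}]}\F_p[t^{\pm1}]/(t^r-1)\rtimes\Z/r\Z$ rather than from the full metabelian quotient $\pi_K/\pi_K^{(2)}$; everything else is a routine Euler-characteristic computation combined with the standard vanishing of $H_2(X_K;\F_p)$ for a knot exterior.
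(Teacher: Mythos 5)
Your proposal does not actually address the statement it is supposed to prove. The statement in question, \cref{thm_FP12}, is a purely algebraic injectivity result about morphisms of projective right $\Z\pi$-modules (quoted by the paper from an earlier article of Friedl--Powell): one must show that if $f\otimes_{\Z\pi}\F_p$ is injective and $\alpha|_{\ker\phi}$ factors through a $p$-group, then $f\otimes_{\Z\pi}Q(H)^n$ is injective. What you have written is instead a proof sketch of the \emph{application} of that result in this paper, namely \cref{thm_non-vanishing} (the nonvanishing of $\Delta_K^{r,p}$), and at the decisive step you write ``I would invoke \cref{thm_FP12}'' --- that is, you assume as a black box exactly the statement you were asked to establish. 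Nowhere in the proposal is there any argument for the actual content of the theorem: why injectivity after tensoring with $\F_p$ over the augmentation should propagate to injectivity after tensoring with $Q(H)^n$ via $\alpha\otimes\phi$. That implication is the entire substance of \cref{thm_FP12}; it requires, roughly, factoring $\alpha\otimes\phi$ through an extension of the torsion-free abelian group $H$ by a finite $p$-group, analysing the semisimple decomposition of $Q[P]$ for $P$ a $p$-group in characteristic $0$, and a Levine/Strebel-type rank argument over the field of fractions $Q(H)$ --- none of which appears in your text.

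As a secondary remark: the argument you do give is essentially a correct route to \cref{thm_non-vanishing} (your verification of the two hypotheses of \cref{thm_FP12} --- that $H_2(X_K;\F_p)=0$ for the knot exterior and that $\alpha_K^{r,p}|_{\ker\phi_K}$ factors through the finite $p$-group $H_K\otimes_{\Z[t^{\pm1}]}\F_p[t^{\pm1}]/(t^r-1)$ --- matches the paper, and your Euler-characteristic bookkeeping in place of the paper's chain-homotopy-lifting argument relative to a meridian is a legitimate variant). But that is a different theorem, and for the statement actually posed your proposal contains no proof.
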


\begin{rem}
The original result~\cite[Theorem 3.1]{FP12} concerns projective left $\Z \pi$-modules.
But we can think of them also as right $\Z \pi$-modules, using the involution of $\Z \pi$ reversing elements of $\pi$, and the results of tensor products from the left are naturally identified with those from the right.
Thus we adapt the statement to the case of projective right $\Z \pi$-modules.
\end{rem}

\begin{proof}[Proof of \cref{thm_non-vanishing}]
The proof is similar to that of \cite[Proposition 4.1]{FP12}.

Let $\mu$ be a meridian in $X_K$.
Since the induced homomorphism $H_*(\mu; \F_p) \to H_*(X_K; \F_p)$ is an isomorphism, we have $H_*(X_K, \mu; \F_p) = 0$.
We first prove that $H_*^{\alpha_K^{r, p} \otimes \phi_K}(X_K, \mu; \Q(t)^n) = 0$, following a standard argument of chain homotopy lifting (see for instance \cite[Proposition 2.10]{COT03}).

We pick a CW complex structure of $X_K$ containing $\mu$ as a subcomplex, and set $C_*$ and $\widetilde{C}_*$ to be the cellular chain complexes $C_*(X_K, \mu) \otimes_\Z \F_p$
and $C_*(\widetilde{X}_K, \widetilde{\mu})$ respectively, where $\widetilde{\mu}$ is the pullback of $\mu$ in the universal cover $\widetilde{X}_K$ of $X_K$.
Let $h_i \colon C_i \to C_{i+1}$ be a chain contraction for $C_*$, so that $\partial h_i + h_{i-1} \partial$ is the identity map on $C_i$.
Since the natural map $\widetilde{C}_* \to C_*$ is surjective and all modules in $C_*$ and $\widetilde{C}_*$ are free, there exists a lift $\tilde{h}_i \colon \widetilde{C}_i \to \widetilde{C}_{i+1}$ of $h_i$ for each $i$.
We write $f_i \colon \widetilde{C}_i \to \widetilde{C}_i$ for the chain map $\partial \tilde{h}_i + \tilde{h}_{i-1} \partial$.

Since the chain map $C_i \to C_i$ induced by $f_i$ is the identity map and $\alpha_K^{r, p}|_{\ker \phi_K}$ factors through the $p$-group
\[ H_K \otimes_{\Z[t^{\pm 1}]} \F_p[t^{\pm 1}] / (t^r-1), \]
it follows from \cref{thm_FP12} that the chain map
\[ \widetilde{C}_i \otimes_{\Z \pi_K} \Q(t)^n \to \widetilde{C}_i \otimes_{\Z \pi_K} \Q(t)^n \]
induced by $f_i$ is injective, and is therefore an isomorphism.
Thus $\tilde{h}_i$ induces a chain homotopy between the zero map and the chain isomorphism.
Therefore $H_*^{\alpha_K^{r, p} \otimes \phi_K}(X_K, \mu; \Q(t)^n) = 0$.

By a direct computation we have $H_*^{(\alpha_K^{r, p} \otimes \phi_K) \circ \iota}(\mu; \Q(t)^n) = 0$, where $\iota \colon \pi_1(\mu) \to \pi_K$ is the induced homomorphism.
Now it follows from the homology long exact sequence for $(X_K, \mu)$ that $H_*^{\alpha_{r, p} \otimes \phi_K}(X_K; \Q(t)^n) = 0$.
Hence $H_1^{\alpha_{r, p} \otimes \phi_K}(X_K; \Z[t^{\pm 1}]^n)$ is a torsion module, which completes the proof.
\end{proof}

\section{Proof of \texorpdfstring{\cref{thm_main}}{Theorem 1.2}} \label{sec_proof}

Now we prove \cref{thm_main}.
For the readers' convenience we recall the statement.%
\thmmain*

\begin{proof}
Let $K$ be an oriented knot in $S^3$ with $\Delta_K(t) \neq 1$, and let $V$ be the Seifert form associated with a Seifert surface~$F$.
Since $\Delta_K(t) \neq 1$, it follows from \cref{thm_Liv02} that there exists a positive integer $r$ such that $\Sigma_{K, r}$ is not a homology $3$-sphere.
Let $p$ be a prime factor of $\left| H_1(\Sigma_{K, r}; \Z) \right|$.
Since the homomorphisms
\[ H_1(X_K \setminus F; \Z[t^{\pm 1}]) \to H_1^{\phi_K}(X_K; \Z[t^{\pm 1}]) = H_K,~
H_K \otimes_{\Z[t^{\pm 1}]} \F_p[t^{\pm 1}] / (t^r-1) \to H_1(\Sigma_{K, r}; \F_p) \]
are surjective, there exists a simple closed curve in $X_K \setminus F$ with nontrivial image in $H_1(\Sigma_{K, r}; \F_p)$.
Let $A$ be such a simple closed curve in $X_K \setminus F$ that is unknotted in $S^3$.
Note that $A$ determines an element $[A] \in \ker \phi_K$ with nontrivial $\alpha_K^{r, p}([A])$ up to conjugation.

Use the satellite construction in \cref{subsec_satellite} to define a family of oriented knots $K_q$ in $S^3$ for primes $q \neq p$ by
\[ K_q = S = S(K, T(p, q) \sharp - T(p, q), A), \]
where $T(p, q)$ is the $(p, q)$-torus knot.
In the following we show that the family $\{ K_q \}_{q \neq p}$ of knots satisfies the conditions (i), (ii) and (iii) as in the statement.

Since $A$ is disjoint from $F$, it follows from \cref{lem_seifert} that for every $q$, $K_q$ has a Seifert form isomorphic to $V$.
Also, since $T(p, q) \sharp - T(p, q)$ is a ribbon knot, it follows from \cref{lem_concordance} that for every $q$, $K_q$ is ribbon concordant to $K$.
It remains to prove that for all primes $q, q' \neq p$, if $K_q \geq_{\top} K_{q'}$ then $q = q'$.
\bigskip

First we compute $\Delta_{K_q}^{r, p}(t)$ for a prime $q \neq p$.
Since $A$ is null-homologous, it follows from \cref{lem_infection} that $\rho_S \colon \pi_{K_q} \to \pi_K$ induces an isomorphism $\ol{\rho}_S \colon \pi_{K_q} / \pi_{K_q}^{(2)} \to \pi_K / \pi_K^{(2)}$.
By \cref{lem_MAP} we have
\begin{equation} \label{eq_p1}
\Delta_{K_q}^{r, p} \doteq \Delta_{K_q}^{\ol{\rho}_S^* \alpha_K^{r, p}} \doteq \Delta_{K_q}^{\alpha_K^{r, p} \circ \rho_S}.
\end{equation}
It follows from \cref{prop_satellite} and \cref{rem_resultant} that
\begin{equation} \label{eq_p2}
\Delta_{K_q}^{\alpha_K^{r, p} \circ \rho_S} \doteq \res(\Delta_{T(p, q) \sharp - T(p, q)}, f_{\alpha_K^{r, p}([A])}) \Delta_K^{r, p}.
\end{equation}
Here we have
\begin{equation} \label{eq_p3}
\Delta_{T(p, q) \sharp - T(p, q)}(t) \doteq \Delta_{T(p, q)}(t) \Delta_{-T(p, q)}(t) \doteq \Delta_{T(p, q)}(t)^2,
\end{equation}
\begin{equation} \label{eq_p4}
\Delta_{T(p, q)}(t) \doteq \Phi_{pq}(t) = \frac{(t^{pq}-1)(t-1)}{(t^p-1)(t^q-1)},
\end{equation}
where $\Phi_n(t)$ denotes the $n$-th cyclotomic polynomial, defined as
\[ \Phi_n(t) = \prod_{1 \leq k \leq n,~ (k, n) = 1} \left( t - e^{\frac{ 2 i \pi k}{n}} \right). \]
Using \cref{eq_p1,eq_p2,eq_p3,eq_p4}, conclude that
\begin{equation} \label{eq_p5}
\Delta_{K_q}^{r, p} \doteq \res(\Phi_{pq}^2, f_{\alpha_K^{r, p}([A])}) \Delta_K^{r, p} \doteq \res(f_{\alpha_K^{r, p}([A])}, \Phi_{pq})^2 \Delta_K^{r, p}.
\end{equation}

Now we show that $\res(\Phi_{pq}^2, f_{\alpha_K^{r, p}([A])})^2 = q^{m_q}$ for a positive integer~$m_q>0$.
Note that $\alpha_K^{r, p}([A])$ is a nontrivial permutation matrix, and since $\alpha_K^{r, p}|_{\ker \phi_K}$ factors through the $p$--group
\[ H_K \otimes_{\Z[t^{\pm 1}]} \F_p[t^{\pm 1}] / (t^r-1), \]
the order of $\alpha_K^{r, p}([A])$ is a nontrivial $p$--power.
Hence $f_{\alpha_K^{r, p}([A])}$ is a product of
$t^{p^k} - 1 = \prod_{i=1}^k \Phi_{p^i}$
for some positive integers $k$. Thus $\res(f_{\alpha_K^{r, p}([A])}, \Phi_{pq})$ is a nontrivial $q$--power, since $\res(\Phi_{p}, \Phi_{pq})=q$ and $\res(\Phi_{p^i}, \Phi_{pq})=1$ for $i > 1$, which follows from the following theorem: for positive integers $m$ and $n$ with $m < n$,
\[ \res (\Phi_m, \Phi_n) =
\begin{cases}
q^{\phi(m)} &\text{if $m ~|~ n$ and $\frac{n}{m}$ is a power of a prime $q$}, \\
1 &\text{otherwise},
\end{cases}
\]
where $\phi(n)$ is Euler's totient function~\cite{A70, D40, Leh30}, which is equal to $\deg \Phi_n$.
We conclude that \cref{eq_p5} implies that there exists a positive integer $m_q$ such that
\begin{equation} \label{eq_p6}
        \Delta_{K_q}^{r, p}(t) \doteq q^{m_q} \Delta_K^{r, p}(t).
\end{equation}

Now for arbitrary primes $q, q' \neq p$ suppose that $K_q \geq_{\top} K_{q'}$ with a homotopy ribbon concordance~$C$.
Since the Seifert forms of $K_q$ and $K_{q'}$ are isometric, the Alexander polynomials $\Delta_{K_q}$ and $\Delta_{K_{q'}}$ are equal.  By \cref{cor:Bl-isometric}, the Blanchfield pairings are isometric.  This implies that the submodule $G$ in the statement of \cref{thm_blanchfield} is trivial. But $G$ is the kernel of the inclusion induced map $j_* \colon H_1(X_{K_q};\Z[t^{\pm 1}]) \to H_1(X_C;\Z[t^{\pm 1}])$, so $j_*$ is injective.  Then \cref{prop_surjection}, applied with the abelianization representation $\alpha \colon \pi_C \to \GL(1,\Z) =\Z$ (or \cite[Proposition 3.1]{FP19}), implies that $j_*$ is surjective. On the other hand, the proof of \cref{thm_blanchfield} implies that the inclusion induced monomorphism $H_1(X_{K_{q'}};\Z[t^{\pm 1}]) \to H_1(X_C;\Z[t^{\pm 1}])$ identifies $H_1(X_{K_{q'}};\Z[t^{\pm 1}])$ with
\[j_*(G^{\perp}/G) = j_*(H_1(X_{K_q};\Z[t^{\pm 1}])) = H_1(X_{C};\Z[t^{\pm 1}]).\]
Therefore, the inclusion induced maps
\begin{equation}\label{eq:Alex-module-isos}
  H_1(X_{K_q};\Z[t^{\pm 1}]) \xrightarrow{\cong} H_1(X_{C};\Z[t^{\pm 1}]) \xleftarrow{\cong}  H_1(X_{K_{q'}};\Z[t^{\pm 1}]).
\end{equation}
are isomorphisms.
Now, for $T \in \{K_q,K_{q'},C\}$, we have a short exact sequence \[1 \to H_1(X_T;\Z[t^{\pm 1}]) \cong \pi_T^{(1)}/\pi_T^{(2)} \to \pi_T/\pi_T^{(2)} \to \Z \to 0\] that splits, so  $\pi_T/\pi_T^{(2)} \cong H_1(X_T;\Z[t^{\pm 1}]) \rtimes \Z$.

It follows that $\iota_{K_q} \colon \pi_{K_q} \to \pi_C$ and $\iota_{K_{q'}} \colon \pi_{K_{q'}} \to \pi_C$ induce isomorphisms $\ol{\iota}_{K_q} \colon \pi_{K_q} / \pi_{K_q}^{(2)} \to \pi_C / \pi_C^{(2)}$ and $\ol{\iota}_{K_{q'}} \colon \pi_{K_{q'}} / \pi_{K_{q'}}^{(2)} \to \pi_C / \pi_C^{(2)}$ respectively.
We write \[\alpha_C^{r, p} \colon \pi_C \to \GL(|\Gamma_{K_q}^{r, p}|, \Z)\] for the pullback of $\ol{\alpha}_{K_q}^{r, p} \circ \ol{\iota}_{K_q}^{-1}$ to $\pi_C$.

Note that $\alpha_C^{r, p}$ determines a representation $\alpha_C^{r, p} \circ \iota_{K_{q'}} \colon \pi_{K_{q'}} \to \GL(|\Gamma_{K_q}^{r, p}|, \Z)$ by composition with the inclusion map. The identification of Alexander modules in~\eqref{eq:Alex-module-isos} induces an isomorphism $\Gamma_{K_q}^{r, p} \xrightarrow{\cong} \Gamma_{K_{q'}}^{r, p}$, under which we may compare $\alpha_C^{r, p} \circ \iota_{K_{q'}}$ and $\alpha_{K_{q'}}^{r, p}$.  These representations of $\pi_{K_{q'}}$ may well differ, but the induced twisted Alexander polynomials agree up to units:
\begin{equation} \label{eq_p8}
\Delta_{K_{q'}}^{\alpha_C^{r, p} \circ \iota_{K_{q'}}} \doteq \Delta_{K_{q'}}^{(\ol{\iota}_{K_q}^{-1} \circ \ol{\iota}_{K_{q'}})^* \alpha_{K_q}^{r, p}} \doteq \Delta_{K_{q'}}^{r, p}.
\end{equation}
The first equation follows from the equality
$\alpha_C^{r, p} \circ \iota_{K_{q'}} = (\ol{\iota}_{K_q}^{-1} \circ \ol{\iota}_{K_{q'}})^* \alpha_{K_q}^{r, p}$ and the second one does from \cref{lem_MAP}.

Now we have all the tools we require to complete the proof that $K_q \geq_{\top} K_{q'}$ implies $q = q'$.
It follows from \cref{thm_division} that
\begin{equation} \label{eq_p7}
\Delta_{K_{q'}}^{\alpha_C^{r, p} \circ \iota_{K_{q'}}} ~|~ \Delta_{K_q}^{\alpha_C^{r, p} \circ \iota_{K_q}} \doteq \Delta_{K_q}^{r, p}.
\end{equation}
Combining \cref{eq_p8,eq_p7}, we obtain
\[\Delta_{K_{q'}}^{r, p} ~|~ \Delta_{K_q}^{r, p}.\]
It then follows from \cref{eq_p6} that
\[ q'^{m_{q'}} \Delta_K^{r, p} ~|~ q^{m_q} \Delta_K^{r, p}. \]
Since $m_q, m_{q'} > 0$, and since $\Delta_K^{r, p}(t) \neq 0$ by \cref{thm_non-vanishing}, we have $q = q'$.

Therefore, the family $\{ K_q \}_{q \neq p}$ of knots satisfies all the conditions (i), (ii) and (iii), and the proof of \cref{thm_main} is complete.
\end{proof}

\begin{appendix}
\section{Base changes for Blanchfield pairings}\label{sec:app}
The goal of this appendix is to prove \cref{prop:appmain,prop:appmain2},
which give a sufficient condition on a $\Z[t^{\pm 1}]$-algebra~$R$
for a Blanchfield pairing $\Bl^R$ to be definable in the same way as $\Bl$ is defined over~$\Z[t^{\pm 1}]$, such that
the analogue of \cref{thm_blanchfield} holds for $\Bl^R$.
Of course, a twisted Blanchfield pairing may be defined in a much more general situation than the one considered in this appendix (see e.g.~\cite{P16}).
The focus here is to choose $R$ such that
$\Bl^R_K$ is determined by $\Bl_K$, and $[J] \geq_S [K]$ implies the conclusion of
\cref{thm_blanchfield} for $\Bl^R_J$ and $\Bl^R_K$.

\cref{prop:appmain,prop:appmain2} are used at two places in this paper.
To prove \cref{cor:double}, which is about the homology of double branched covers of knots,
we set $R = \Z[t^{\pm 1}] / (t + 1)$.
To prove \cref{cor:sig}, which is about knot signatures, we set $R$ to be the localization of $\R[t^{\pm 1}]$
at $(\zeta)$, for $\zeta(t) = t^{-1} - 2\cos x + t$ a polynomial with nonreal roots on the unit circle.

Let us first establish some notation and definitions.
Let $\Xi \subset \Z[t^{\pm 1}]$ be the multiplicative set $\{p(t)\in \Z[t^{\pm 1}]\mid |p(1)| = 1\}$.
Let us call an element $a$ of a $\Z[t^{\pm 1}]$-module \emph{$\Xi$-torsion} if $q\cdot a = 0$ for some $q\in \Xi$,
and let us call a $\Z[t^{\pm 1}]$-module \emph{$\Xi$-torsion} if all of its elements are {$\Xi$-torsion}.
Let us call a $\Z[t^{\pm 1}]$-module $M$ \emph{$\Xi$-divisible} if for all $a\in M$ and $q\in \Xi$, there exists
$b\in M$ with $q\cdot b = a$.
Examples of $\Xi$-divisible modules are given by
$\Xi^{-1} \Z[t^{\pm 1}]$ and $\Xi^{-1} \Z[t^{\pm 1}] / \Z[t^{\pm 1}]$.

An \emph{admissible $\Z[t^{\pm 1}]$-algebra} is defined to be a
commutative unital $\Z[t^{\pm 1}]$-algebra $R$ with an involution, which we also denote by~$\ol{\,\cdot\,}$,
satisfying $\ol{p}\cdot\ol{r} = \ol{p\cdot r}$ for all $p\in \Z[t^{\pm 1}], r\in R$,
such that $R$ has no $(t-1)$-torsion and no $\Xi$-torsion.
Clearly, $\Z[t^{\pm 1}]$ is an admissible $\Z[t^{\pm 1}]$-algebra itself.
Further examples of such algebras are given by $\Z[t^{\pm 1}] / (q)$ for any polynomial $q \in \Z[t^{\pm 1}]$
that satisfies $\ol{(q)} = (q) \subset \Z[t^{\pm 1}]$ and $q(1) \neq 0$, and has no factor $p(t)\in \Z[t^{\pm 1}]$
with $|p(1)| = 1$. For instance, one may take $q$ as an $n$-th cyclotomic polynomial $\Phi_n$ for $n\geq 2$.
Yet further examples of admissible $\Z[t^{\pm 1}]$-algebras are given by localizations of $\Z[t^{\pm 1}]$
with respect to multiplicative sets $\Theta \subset \Z[t^{\pm 1}]$ with $0\not\in\Theta$
and $\ol{\Theta} = \Theta$.

For a $\Z[t^{\pm 1}]$-module $M$, we write $\Xi^{-1}M = M \otimes \Xi^{-1}\Z[t^{\pm 1}]$ for the localization of $M$ with respect to $\Xi$ (in this appendix, all tensor products are understood to be over $\Z[t^{\pm 1}]$).
Since $\Z[t^{\pm 1}]$ itself and $R$ are $\Xi$-torsion free, we have natural inclusions of modules (by sending $1$ to $1$)
\[
\Z[t^{\pm 1}] \subset \Xi^{-1}\Z[t^{\pm 1}] \subset \Q(t), \qquad
R \subset \Xi^{-1}R \subset Q(R),
\]
where we recall that $Q(R)$ is the localization of $R$ with respect to the multiplicative set of elements of $R$ that are not zero divisors. The homomorphisms $\iota\colon \Z[t^{\pm 1}] \to R$
and $\Xi^{-1}\Z[t^{\pm 1}] \to \Xi^{-1}R$ (both given by sending $1$ to $1$)
induce a homomorphism $\iota'\colon \Xi^{-1}\Z[t^{\pm 1}]/\Z[t^{\pm 1}] \to \Xi^{-1}R/R$.

\begin{prop}\label{prop:appmain}
Let $R$ be an admissible $\Z[t^{\pm 1}]$-algebra and $K$ a knot.
The four maps
\begin{multline*}
\begin{tikzcd}[sep=large,ampersand replacement=\&]
H_1(X_K; R) \ar[r] \& H_1(X_K, \partial X_K; R) \ar[r,"\text{PD}^{-1}"] \& \mbox{}
\end{tikzcd}\\
\begin{tikzcd}[sep=large,ampersand replacement=\&]
H^2(X_K; R) \ar[r,"\beta^{-1}"] \& H^1(X_K; Q/R) \ar[r,"\kappa"] \& \ohom_{R}( H_1(X_K; R), Q(R)/R)
\end{tikzcd}
\end{multline*}
defined analogously to \cref{eq:defBl}
are isomorphisms, and their composition
is the adjoint of a sesquilinear, Hermitian and nonsingular pairing
\[
\Bl^R_K\colon H_1(X_K; R) \times H_1(X_K; R) \to Q(R)/R.
\]
That pairing is determined by $\Bl_K$, satisfying
\[
\Bl^R_K(a\otimes r, b\otimes s) = r\cdot\ol{s}\cdot\iota'(\Bl_K(a,b))
\]
for all $a, b\in H_1(X_K; \Z[t^{\pm 1}])$ and $r,s\in R$
(here, we use that $\Bl_K(a,b) \in \Xi^{-1}\Z[t^{\pm 1}] \subset \Q(t)$).
\end{prop}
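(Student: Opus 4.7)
My plan is to verify each of the four displayed maps is an isomorphism, derive the sesquilinearity, Hermitian property, and nonsingularity of the resulting pairing, and establish the naturality formula.

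For the inclusion and Poincaré duality maps, I would use the long exact sequence of the pair $(X_K, \partial X_K)$ with $R$-coefficients. Since $\partial X_K$ is a torus with $H_i(\partial X_K; \Z[t^{\pm 1}]) \cong \Z[t^{\pm 1}]/(t-1)$ for $i = 0, 1$, and $R$ has no $(t-1)$-torsion, a universal coefficient calculation gives $H_i(\partial X_K; R) \cong R/(t-1)$ in those degrees. The $0$-framed longitude bounds a lift of the Seifert surface in the infinite cyclic cover, so it has trivial image in $H_1(X_K; R)$, while $H_0(\partial X_K; R) \to H_0(X_K; R)$ is clearly an isomorphism. Together these yield $H_1(X_K; R) \cong H_1(X_K, \partial X_K; R)$. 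Twisted Poincaré-Lefschetz duality, standard for compact orientable $3$-manifolds with boundary, identifies this with $H^2(X_K; R)$.

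The Bockstein $\beta$ is the main technical obstacle. It is an isomorphism provided $H^i(X_K; Q(R)) = 0$ for $i = 1, 2$, which I would reduce to a universal coefficient calculation over $\Z[t^{\pm 1}]$ applied to the chain complex of the infinite cyclic cover. The admissibility hypotheses play the central role here: every element of $\Xi$ --- in particular $\Delta_K$, which annihilates the Alexander module --- is a non-zero-divisor in $R$ and so a unit in $Q(R)$; likewise $(t-1)$ is a unit in $Q(R)$. These facts kill both the Alexander module tensored with $Q(R)$ and the relevant Tor terms arising from $H_0(X_K; \Z[t^{\pm 1}]) \cong \Z[t^{\pm 1}]/(t-1)$. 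For the Kronecker map $\kappa$, the universal coefficient theorem leaves an error term $\ext^1_R(R/(t-1), Q(R)/R)$. Using the resolution $0 \to R \xrightarrow{\cdot(t-1)} R \to R/(t-1) \to 0$, this vanishes iff multiplication by $(t-1)$ on $Q(R)/R$ is surjective, which follows from $(t-1)$ being a unit in $Q(R)$.

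With all four maps shown to be isomorphisms, sesquilinearity is immediate, and the Hermitian property and nonsingularity follow from the standard Blanchfield arguments, using that Poincaré duality intertwines suitably with the involution. For the naturality formula, each of the four maps is natural with respect to the ring homomorphism $\iota \colon \Z[t^{\pm 1}] \to R$, which gives a commutative diagram between the four maps defining $\Bl_K$ and their counterparts for $\Bl^R_K$. Chasing a simple tensor $a \otimes r$ through the diagram, and invoking sesquilinearity to place $\ol{s}$ on the second argument, yields the claimed formula.
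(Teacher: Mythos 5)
Your treatment of the four maps is essentially sound, and in places takes a legitimately different route from the paper: you kill $H^*(X_K;Q(R))$ by a universal-coefficient argument over $\Z[t^{\pm 1}]$ (using that $\Delta_K\in\Xi$ and $t-1$ become units in $Q(R)$), and you prove $H_1(X_K;R)\cong H_1(X_K,\partial X_K;R)$ via the null-homologous longitude, where the paper instead uses that $1-t$ acts invertibly on any module annihilated by $\Delta_K$. The genuine gap is in the last step, the formula $\Bl^R_K(a\otimes r,b\otimes s)=r\,\ol{s}\,\iota'(\Bl_K(a,b))$. You propose to get it from ``naturality of the four maps with respect to $\iota\colon\Z[t^{\pm 1}]\to R$'', but for the Bockstein and Kronecker maps this naturality does not even typecheck: it would require a morphism of coefficient short exact sequences from $0\to\Z[t^{\pm 1}]\to\Q(t)\to\Q(t)/\Z[t^{\pm 1}]\to 0$ to $0\to R\to Q(R)\to Q(R)/R\to 0$ lying over $\iota$, i.e.\ a $\Z[t^{\pm 1}]$-linear map $\Q(t)\to Q(R)$ extending $\iota$, and such a map does not exist in general. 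For instance, for $R=\Z[t^{\pm 1}]/(t+1)\cong\Z$ one would need $f\colon\Q(t)\to\Q$ with $f(1)=1$, yet $f(1)=(t+1)\cdot f\bigl(\tfrac{1}{t+1}\bigr)=0$ since $t$ acts by $-1$. This is exactly why the statement records that $\Bl_K$ takes values in $\Xi^{-1}\Z[t^{\pm 1}]/\Z[t^{\pm 1}]$ and why the comparison map is $\iota'$, which is only defined on that submodule.

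To close the gap one must route the comparison through the $\Xi$-localizations, which do map compatibly because $R$ is $\Xi$-torsion free: $\iota$ induces $\Xi^{-1}\Z[t^{\pm 1}]\to\Xi^{-1}R$ and $\iota'\colon\Xi^{-1}\Z[t^{\pm 1}]/\Z[t^{\pm 1}]\to\Xi^{-1}R/R$. One first shows that the adjoint of the pairing can equivalently be computed with the coefficient sequence $0\to R\to\Xi^{-1}R\to\Xi^{-1}R/R\to 0$ (and likewise over $\Z[t^{\pm 1}]$), compatibly with the inclusion $\Xi^{-1}R/R\hookrightarrow Q(R)/R$; this is the content of \cref{lem:changecoeff}(iv) and (vi). Only then does one have an honest map of coefficient sequences over $\iota$, and the diagram chase you describe becomes \cref{lem:ZtoR}. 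A related point you leave implicit: chasing a simple tensor $a\otimes r$, and the claim that the formula \emph{determines} $\Bl^R_K$, both presuppose that the base-change map $H_1(X_K;\Z[t^{\pm 1}])\otimes R\to H_1(X_K;R)$ is surjective (in fact an isomorphism, using that $R$ has no $(t-1)$-torsion, as in \cref{lem:changecoeff}(i)); you verify this only for $\partial X_K$, not for $X_K$, and it is also what lets the Hermitian property be read off from the formula rather than requiring a separate duality argument.
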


\begin{prop}\label{prop:appmain2}
Let $J$ and $K$ be knots, and let $G\subset H_1(X_J;\Z[t^{\pm 1}])$ be a submodule such that
the pairing on $G^{\bot} / G$ induced by $\Bl_J$ is isometric to $\Bl_K$.
\begin{enumerate}[label=\emph{(\roman*)}]
\item The pairing on $(G^{\bot}\otimes R) / (G \otimes R)$ induced by $\Bl^R_J$ is isometric to~$\Bl^R_K$.
\item $G^{\bot} \otimes R = (G\otimes R)^{\bot_R}$.
\end{enumerate}
\end{prop}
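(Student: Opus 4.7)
My plan is to deduce both parts from the base-change formula
\[
\Bl^R_L(a\otimes r, b\otimes s) \;=\; r\,\ol{s}\,\iota'(\Bl_L(a,b))
\]
of \cref{prop:appmain} together with the nonsingularity of $\Bl^R_K$. For (i), the isometry $g\colon G^\bot/G\xrightarrow{\cong}H_1(X_K;\Z[t^{\pm 1}])$ provided by the hypothesis tensors with $R$ to give an $R$-linear isomorphism $g\otimes R\colon (G^\bot/G)\otimes R\xrightarrow{\cong}H_1(X_K;R)$, where the target is identified with $H_1(X_K;\Z[t^{\pm 1}])\otimes R$ via the universal coefficient theorem (whose $\Tor_1$ term vanishes because the Alexander module admits a square presentation with determinant $\Delta_K\in\Xi$ and $R$ is $\Xi$-torsion free). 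Right exactness of $-\otimes R$ identifies $(G^\bot/G)\otimes R$ with $(G^\bot\otimes R)/(G\otimes R)$, and the formula combined with the isometry property of $g$ shows that $g\otimes R$ is itself an isometry of pairings.

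The easy inclusion $G^\bot\otimes R\subseteq(G\otimes R)^{\bot_R}$ of (ii) is immediate from the formula, since $\Bl_J(G^\bot,G)=0$. For the reverse, given $x\in(G\otimes R)^{\bot_R}$, the adjoint functional $\Bl^R_J(x,-)$ descends through $H_1(X_J;R)/(G\otimes R)$; its restriction to the submodule $(G^\bot\otimes R)/(G\otimes R)$, transported via the isometry of (i), is a functional on $H_1(X_K;R)$ which, by nonsingularity of $\Bl^R_K$, equals $\Bl^R_K(y_0,-)$ for some $y_0$. Pulling $y_0$ back and lifting produces $x_0\in G^\bot\otimes R$ such that $\Bl^R_J(x-x_0,-)$ vanishes on $G^\bot\otimes R$, i.e.~$x-x_0\in(G^\bot\otimes R)^{\bot_R}$. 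The proof is completed by showing $(G^\bot\otimes R)^{\bot_R}\subseteq G^\bot\otimes R$, whence $x\in G^\bot\otimes R$ as required.

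The main obstacle is this final inclusion, which amounts to transporting the $\Z[t^{\pm 1}]$-coefficient double-perpendicular identity $(G^\bot)^\bot=G$ (itself a consequence of the metabolizer property of $\ker(j_*+k_*)$ underlying the proof of \cref{thm_blanchfield}) to $R$-coefficients. Order-counting arguments of the type in \cref{lem:pid} are not available for a general admissible $R$, but the admissibility hypotheses compensate: $\Xi$-torsion freeness of $R$ forces the relevant $\Tor$ groups to vanish so that the tensored metabolizer $\ker(j_*+k_*)\otimes R$ injects into $H_1(X_J;R)\oplus H_1(X_K;R)$ and remains a metabolizer for $-\Bl^R_J\oplus\Bl^R_K$, while $(t-1)$-torsion freeness is already required for the very definition of $\Bl^R$ in \cref{prop:appmain}. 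Re-running the algebraic lemma from the proof of \cref{thm_blanchfield} over $R$ then yields $(G^\bot\otimes R)^{\bot_R}=G\otimes R$, completing (ii).
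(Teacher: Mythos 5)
Your part (i) follows the paper's route (tensor the exact sequence $0\to G\to G^{\bot}\to H_1(X_K;\Z[t^{\pm 1}])\to 0$ with $R$, using Tor-vanishing and the base-change formula of \cref{prop:appmain}), although the Tor group you actually need to kill is $\Tor_1^{\Z[t^{\pm 1}]}(H_1(X_K;\Z[t^{\pm 1}]),R)$ (handled in the paper by \cref{lem:tor}, via $\Z$-torsion-freeness of the Alexander module and annihilation by $\Delta_K\in\Xi$), together with \cref{lem:changecoeff}~(i) for the identification $H_1(X_K;\Z[t^{\pm 1}])\otimes R\cong H_1(X_K;R)$; your stated justification conflates these two points, but this is repairable.

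Part (ii), however, has a genuine gap at exactly the step you flag as the main obstacle, and the proposed fix is circular. First, the hypothesis of the proposition is purely algebraic ($[J]\geq_S[K]$): there is no concordance and no maps $j_*,k_*$, so the "metabolizer $\ker(j_*+k_*)$" is not given — one would have to manufacture a metabolizer of $-\Bl_J\oplus\Bl_K$ from $G$ as in \cref{remark-contrasting-Bl}. More seriously, your plan is to tensor this metabolizer $P=P^{\bot}$ with $R$ and assert that it "remains a metabolizer for $-\Bl^R_J\oplus\Bl^R_K$". But the statement $(P\otimes R)^{\bot_R}=P\otimes R=P^{\bot}\otimes R$ is precisely an instance of assertion (ii) itself, applied to the orthogonal sum pairing: the nontrivial content of (ii) is that taking orthogonal complements commutes with the base change $-\otimes R$, and $\Xi$-torsion-freeness of $R$ only buys you injectivity of $P\otimes R\to H_1(X_J;R)\oplus H_1(X_K;R)$, not the reverse inclusion $(P\otimes R)^{\bot_R}\subseteq P\otimes R$. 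So "re-running the algebraic lemma over $R$" presupposes what you are trying to prove, and the order-counting of \cref{lem:pid} is indeed unavailable since $\Z[t^{\pm 1}]$-algebras such as $R$ need not be PIDs. The paper closes this gap by a direct homological comparison: it exhibits $G^{\bot}$ as the kernel of $H_1(X_J;\Z[t^{\pm 1}])\to\ohom_{\Z[t^{\pm 1}]}(G,\Xi^{-1}\Z[t^{\pm 1}]/\Z[t^{\pm 1}])$ (exactness of \cref{eq:ses} uses $\Xi$-divisibility via \cref{lem:tor}~(iii)), shows this sequence stays exact after $\otimes R$ (\cref{lem:homztorsionfree} plus \cref{lem:tor}~(ii)), writes $(G\otimes R)^{\bot_R}$ as the kernel of the analogous map into $\ohom_R(G\otimes R,\Xi^{-1}R/R)$, and then proves injectivity of the comparison map $\ohom_{\Z[t^{\pm 1}]}(G,\Xi^{-1}\Z[t^{\pm 1}]/\Z[t^{\pm 1}])\otimes R\to\ohom_R(G\otimes R,\Xi^{-1}R/R)$ using a length-one free resolution of $G$ (\cref{lem:typeK}) and the vanishing of $\Tor_1^{\Z[t^{\pm 1}]}(R,\Xi^{-1}\Z[t^{\pm 1}]/\Z[t^{\pm 1}])$. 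Some argument of this kind (or another independent proof that orthogonal complements are preserved under $-\otimes R$) is what your sketch is missing.
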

Note that in particular, if $\Bl_K$ is metabolic, then so is $\Bl^R_K$.
\Cref{prop:appmain} might not come as a surprise to the experts, but we thought it beneficial to include a detailed proof.
This proof is spread over two lemmas.
The first lemma contains the proofs that the four maps in \cref{prop:appmain} are isomorphisms,
and shows that $\Bl^R$ may be defined equivalently with target modules either $\Xi^{-1}R/R$ or $Q(R)/R$.

\begin{lem}\label{lem:changecoeff}
Let $R$ be an admissible $\Z[t^{\pm 1}]$-algebra and let $K$ be a knot.
\begin{enumerate}[label=\emph{(\roman*)}]
\item The natural maps $H_1(X_K; \Z[t^{\pm 1}])\otimes R \to H_1(X_K; R)$ and
$H_1(X_K, \partial X_K; \Z[t^{\pm 1}])\otimes R \to H_1(X_K, \partial X_K; R)$ are isomorphisms.
\item $H_1(X_K; R)$ and $H_1(X_K, \partial X_K; R)$ are $\Xi$-torsion modules $($in particular, $R$-torsion modules$)$,
and multiplication by $(1-t)$ acts invertibly on them.
\item The inclusion induced homomorphism $H_1(X_K; R) \to H_1(X_K, \partial X_K; R)$ is an isomorphism.
\item For $\Omega = Q(R)$ or $\Omega = \Xi^{-1}R$, there exists a unique isomorphism
\[
\delta\colon H^2(X_K; R)\to \ohom_R(H_1(X_K; R), \Omega/R),
\]
such that $\delta \circ\beta = \kappa$,
where $\beta\colon H^1(X_K; \Omega/R) \to H^2(X_K; R)$ is the Bockstein connecting homomorphism,
and $\kappa\colon H^1(X_K; \Omega/R) \to\ohom_R(H_1(X_K; R), \Omega/R)$ is the Kronecker evaluation.

\item For $\Omega = Q(R)$, $H^1(X_K;\Omega)$ is trivial, and $\delta = \kappa\circ\beta^{-1}$.

\item The following diagram commutes.
\[
\begin{tikzcd}
H^2(X_K; R) \ar[d,"\text{id}"]\ar[rr,"\delta"] && \ohom_{R}( H_1(X_K; R), \Xi^{-1}R/R) \ar[d,hook] \\
H^2(X_K; R) \ar[r,"\beta^{-1}"] & H^1(X_K; Q(R)/R) \ar[r,"\kappa"] & \ohom_{R}( H_1(X_K; R), Q(R)/R).
\end{tikzcd}
\]

\noindent
Here, the second vertical map is induced by the inclusion $\Xi^{-1}R/R \to Q(R)/R$.
\end{enumerate}
\end{lem}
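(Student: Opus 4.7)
The plan is to dispatch parts (i)–(iii) with universal coefficient and long exact sequence arguments, and then handle parts (iv)–(vi) together by first treating $\Omega = Q(R)$ via UCT and reducing $\Omega = \Xi^{-1}R$ to it using the $\Xi$-torsion of $H_1(X_K; R)$.

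For part (i), I would apply the universal coefficient spectral sequence $\Tor_p^{\Z[t^{\pm 1}]}(H_q(X_K; \Z[t^{\pm 1}]), R) \Rightarrow H_{p+q}(X_K; R)$; using the resolution $0 \to \Z[t^{\pm 1}] \xrightarrow{t-1} \Z[t^{\pm 1}] \to \Z \to 0$, the term $\Tor_1^{\Z[t^{\pm 1}]}(\Z, R)$ reduces to the $(t-1)$-torsion of $R$, which vanishes by admissibility; the pair version is immediate since $H_0(X_K, \partial X_K; \Z[t^{\pm 1}]) = 0$. For part (ii), the Alexander module is classically annihilated by $\Delta_K \in \Xi$, and since $\Delta_K(1) = \pm 1$ one has $1 \in (\Delta_K, 1-t) \subseteq \Z[t^{\pm 1}]$, so $(1-t)$ acts invertibly on any $\Delta_K$-torsion module; (i) then transports both properties to $R$-coefficients. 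For part (iii), I would compute $H_i(\partial X_K; R) = R/(t-1)R$ for $i = 0, 1$ (the infinite cyclic cover of $T^2$ corresponding to the meridian is homotopy equivalent to $S^1$). In the long exact sequence of the pair, $H_1(\partial X_K; R) \to H_1(X_K; R)$ is zero (target has $(1-t)$ invertible while source has $(t-1)$ acting as zero), and $H_0(\partial X_K; R) \to H_0(X_K; R)$ is an isomorphism, forcing $H_1(X_K; R) \to H_1(X_K, \partial X_K; R)$ to be an isomorphism.

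For parts (iv)–(vi) with $\Omega = Q(R)$: UCT computations give $H^i(X_K; Q(R)) = 0$ for $i = 1, 2$ — the Ext terms $Q(R)/(t-1)Q(R)$ vanish because $(t-1)$ is invertible in $Q(R)$ (as a non-zero-divisor in $R$), and the Hom terms vanish by torsion-freeness of $Q(R)$ against the $\Xi$-torsion of $H_1(X_K; R)$ via hom-tensor adjunction (with Poincaré duality and (iii) used for the $H^2$ case) — so the Bockstein $\beta$ is an isomorphism. Similarly, $\kappa$ for $Q(R)/R$ is an isomorphism (its UCT Ext term vanishes by $(t-1)$-divisibility of $Q(R)/R$, and the Hom term becomes $\ohom_R(H_1(X_K; R), Q(R)/R)$ by adjunction and (i)). Setting $\delta = \kappa \circ \beta^{-1}$ yields (v). For $\Omega = \Xi^{-1}R$: the crucial observation is that $\Xi^{-1}R/R$ is exactly the $\Xi$-torsion submodule of $Q(R)/R$ (if $x \in Q(R)$ and $qx \in R$ with $q \in \Xi$, then $x = (qx)/q \in \Xi^{-1}R$), so since $H_1(X_K; R)$ is $\Xi$-torsion, every $R$-linear map from it into $Q(R)/R$ automatically factors through $\Xi^{-1}R/R$. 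This gives a canonical identification $\ohom_R(H_1(X_K; R), \Xi^{-1}R/R) \cong \ohom_R(H_1(X_K; R), Q(R)/R)$, through which $\delta$ for $Q(R)$ yields the desired isomorphism for $\Xi^{-1}R$; the relation $\delta \circ \beta = \kappa$ and statement (vi) both follow from naturality of Bockstein and Kronecker with respect to the inclusion of short exact sequences. Uniqueness in both cases follows from surjectivity of $\beta$, which for $\Omega = \Xi^{-1}R$ uses $H^2(X_K; \Xi^{-1}R) = 0$ (since the Alexander module vanishes after inverting $\Xi$, then PD and (iii)).

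The main obstacle I anticipate is making rigorous the naturality used in the $\Omega = \Xi^{-1}R$ case. While it is tautological on cocycles that Bockstein and Kronecker commute with the inclusion $\Xi^{-1}R \hookrightarrow Q(R)$, a careful verification requires chasing the map of short exact sequences $(0 \to R \to \Xi^{-1}R \to \Xi^{-1}R/R \to 0) \hookrightarrow (0 \to R \to Q(R) \to Q(R)/R \to 0)$ through the definitions at chain level. Once this commutative square is in place, parts (iv), (v), and (vi) collapse into a single coherent statement.
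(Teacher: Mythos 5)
Your proof is correct, and for (i)--(iii) it matches the paper's argument almost step for step (universal coefficient spectral sequence plus the vanishing of $(t-1)$-torsion in $R$ for (i); $\Delta_K\in\Xi$ together with $\Delta_K=(1-t)q(t)\pm1$ for (ii); the long exact sequence of the pair for (iii), where you invoke injectivity of $H_0(\partial X_K;R)\to H_0(X_K;R)$ instead of the paper's vanishing of the connecting homomorphism -- an equivalent step). The genuine divergence is in (iv)--(vi). The paper treats both choices of $\Omega$ uniformly: it shows $\beta$ and $\kappa$ are surjective ($H^2(X_K;\Omega)\cong H_1(X_K,\partial X_K;R)\otimes\Omega=0$ by Poincar\'e duality and flatness; the Kronecker exact sequence), and then proves $\ker\beta=\ker\kappa$ by comparing $\Ext^1_R(H_0(X_K;R),\Omega)\to\Ext^1_R(H_0(X_K;R),\Omega/R)$ with $H^1(X_K;\Omega)\to H^1(X_K;\Omega/R)$, using $\ohom_R(H_1(X_K;R),\Omega)=0$ and $\Ext^2_R(H_0(X_K;R),R)=0$; this yields existence, uniqueness and bijectivity of $\delta$ without ever needing $\beta$ or $\kappa$ to be injective for $\Omega=\Xi^{-1}R$, and (vi) is then deduced from naturality. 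You instead settle $\Omega=Q(R)$ first, where $\beta$ and $\kappa$ are honest isomorphisms, and transport $\delta=\kappa\circ\beta^{-1}$ to $\Omega=\Xi^{-1}R$ via your observation that $\Xi^{-1}R/R$ is precisely the $\Xi$-torsion submodule of $Q(R)/R$ (valid because $\Xi$ maps to non-zero-divisors of $R$), so that $\ohom_R(H_1(X_K;R),\Xi^{-1}R/R)\to\ohom_R(H_1(X_K;R),Q(R)/R)$ is an isomorphism since $H_1(X_K;R)$ is $\Xi$-torsion; the relation $\delta\circ\beta=\kappa$ and statement (vi) then follow from naturality of the Bockstein and Kronecker maps for the inclusion of coefficient sequences, and uniqueness from surjectivity of $\beta$, i.e.\ $H^2(X_K;\Xi^{-1}R)=0$, exactly as in the paper. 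Both routes work: yours makes (vi) tautological by construction, while the paper's kernel comparison handles both $\Omega$ simultaneously and never needs to locate $\Xi^{-1}R/R$ inside $Q(R)/R$. The one point you should make explicit (the paper is also brief here) is that the two-term universal coefficient sequences you use for $H^1$ require $\Ext^2_R(H_0(X_K;R),-)=0$, which holds because $H_0(X_K;R)\cong R/(1-t)R$ and $1-t$ is a non-zero-divisor in~$R$.
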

\begin{proof}
(i)
The universal coefficient spectral sequence for homology groups~\cite[Theorem 10.90]{Ro09} yields a short exact sequence
\[
\begin{tikzcd}[column sep=small]
0 \ar[r] &
H_1(X_K; \Z[t^{\pm 1}])\otimes R \ar[r] &
H_1(X_K; R) \ar[r] & \Tor^{\Z[t^{\pm 1}]}_1(H_0(X_K; \Z[t^{\pm 1}]), R) \ar[r] & 0.
\end{tikzcd}
\]
Since $H_0(X_K; \Z[t^{\pm 1}]) \cong \Z[t^{\pm 1}] / (t-1)$, the $\Tor$-term above is isomorphic to
the $(t-1)$-torsion submodule of $R$, which is trivial by assumption.
So the map $H_1(X_K; \Z[t^{\pm 1}])\otimes R \to H_1(X_K; R)$ is an isomorphism.
For homology rel.\ $\partial X_K$, a similar but simpler argument can be made, since $H_0(X_K, \partial X_K; \Z[t^{\pm 1}])$ is trivial.
\smallskip

(ii) Consider $\Delta_K\in \Xi$.
Note that $\Delta_K$ annihilates $H_1(X_K; \Z[t^{\pm 1}])$,
and so $\Delta_K\cdot 1_R\in R$ annihilates $H_1(X_K; R)$ by (i).
The same argument works for homology rel.\ $\partial X_K$.

The polynomial $\Delta_K\in \Xi$ decomposes as $\Delta_K = (1-t)q(t) \pm 1$ for some $q(t)\in \Z[t^{\pm 1}]$.
Since $\Delta_K$ annihilates the modules in question, multiplication by $1-t$ is an isomorphism,
with inverse given by multiplication by $\mp q(t)$.
\smallskip

(iii) One computes
$H_0(\partial X_K; R) \cong H_1(\partial X_K; R) \cong R/(t-1)R$.
It follows that
the inclusion induced map $H_1(\partial X_K; R) \to H_1(X_K; R)$ and the connecting homomorphism $H_1(X_K, \partial X_K; R) \to H_0(\partial X; R)$ both vanish, since multiplication by $(1-t)$ is an isomorphism of $H_1(X_K, \partial X_K; R)$ and of $H_1(X_K; R)$ by (i) and (ii).
\smallskip

(iv)
Part of the Bockstein long exact sequence coming from $0 \to R\to \Omega\to \Omega/R\to 0$ is
\[
\begin{tikzcd}[column sep=small]
H^1(X_K; R) \ar[r] & H^1(X_K; \Omega) \ar[r,"\alpha"] & H^1(X_K; \Omega/R) \ar[r,"\beta"] & H^2(X_K; R) \ar[r] & H^2(X_K; \Omega).
\end{tikzcd}
\]
We have that
\[
H^2(X_K; \Omega) \cong H_1(X_K, \partial X_K; \Omega) \cong H_1(X_K, \partial X_K; R) \otimes \Omega
\]
by Poincaré duality and flatness of $\Omega$. The latter module
is trivial, since $H_1(X_K, \partial X_K; R)$ is $\Xi$-torsion by (ii).
It follows that $\beta$ is surjective.

The Kronecker evaluation $\kappa$ is part of the short exact sequence
\[
\begin{tikzcd}[column sep=small]
0 \ar[r] &
\Ext^1_R(H_0(X_K; R), \Omega/R) \ar[r,"\gamma"] & H^1(X_K; \Omega/R) \ar[r,"\kappa"] & \ohom_R(H_1(X_K; R), \Omega/R) \ar[r] & 0.
\end{tikzcd}
\]

Since $\beta$ and $\kappa$ are both surjective, to show existence and uniqueness of $\delta$ it suffices to
show that $\ker \beta = \ker \kappa$. For this, consider the following commutative diagram.
\[
\begin{tikzcd}
\Ext^1_R(H_0(X_K; R), \Omega) \ar[r]\ar[d] & \Ext^1_R(H_0(X_K; R), \Omega/R) \ar[d,"\gamma"] \\
H^1(X_K; \Omega) \ar[r,"\alpha"] & H^1(X_K; \Omega/R).
\end{tikzcd}
\]
Note that $\ohom(H_1(X_K; R), \Omega)$ vanishes, since $H_1(X_K; R)$ is $\Xi$-torsion by (ii).
So the left map in the diagram is an isomorphism. Moreover, $\Ext^2_R(H_0(X_K; R), R)$ is trivial,
since $H_0(X_K; R) = R/(1-t)R$. So the top map in the diagram is surjective.
It follows that
\[
\ker \kappa = \gamma(\Ext^1_R(H_0(X_K; R), \Omega/R)) = \alpha(H^1(X_K; \Omega)) = \ker \beta.
\]
\smallskip

(v)
As shown in (iv), $H^1(X_K; Q(R)) \cong \Ext^1_R(H_0(X_K; R), Q(R))$. Since $H_0(X_K; R) \cong R / (1-t)R$,
the $\Ext$-term is isomorphic to $Q(R) / ((1-t)Q(R))$, which is trivial since
$(1-t)$ is invertible in~$Q(R)$.
So $\beta$ is an isomorphism, and $\delta = \kappa\circ\beta^{-1}$.
\smallskip

(vi) This follows from naturality.
\end{proof}
The next lemma analyses the relationship between $\Bl$ and $\Bl^R$.
\begin{lem}\label{lem:ZtoR}
The following diagram of $\Z[t^{\pm 1}]$-modules commutes.
\newlength{\armove}\setlength{\armove}{3.5em}%
\newcommand{\ardl}{\ar[d,start anchor={[xshift=-\armove]south east},end anchor={[xshift=-\armove]north east}]}%
\newcommand{\ardr}{\ar[d,start anchor={[xshift=\armove]south west},end anchor={[xshift=\armove]north west}]}%
\noindent
\[
\begin{tikzcd}
[%
    ,/tikz/column 1/.append style={anchor=base east}
    ,/tikz/column 2/.append style={anchor=base west},sep=small
    ]
H_1(X_K; \Z[t^{\pm 1}]) \ardl\ar[r] &
H_1(X_K; R) \ardr
\\
H_1(X_K, \partial X_K; \Z[t^{\pm 1}]) \ardl\ar[r] &
H_1(X_K, \partial X_K; R) \ardr
\\
H^2(X_K; \Z[t^{\pm 1}]) \ardl \ar[r] &
H^2(X_K; R) \ardr
\\
\ohom_{\Z[t^{\pm 1}]}( H_1(X_K; \Z[t^{\pm 1}]), \Xi^{-1}\Z[t^{\pm 1}]/\Z[t^{\pm 1}])\ar[r] &
\ohom_{R}( H_1(X_K; R), \Xi^{-1}R/R)
\end{tikzcd}
\]
Here, the vertical maps are the isomorphisms given by inclusion, inverse of Poincaré duality and~$\delta$, respectively.
The first three horizontal maps are induced by $\iota$, and the fourth horizontal map is defined as composition of
\begin{align*}
& \begin{tikzcd}[ampersand replacement=\&,sep=scriptsize]
  \ohom_{\Z[t^{\pm 1}]}( H_1(X_K; \Z[t^{\pm 1}]), \Xi^{-1} \Z[t^{\pm 1}]/ \Z[t^{\pm 1}])
\ar[r, "\iota'_*"] \& \ohom_{\Z[t^{\pm 1}]}(H_1(X_K; \Z[t^{\pm 1}]), \Xi^{-1}R/R)
\ar[r, "\cong"] \& \mbox{}
\end{tikzcd} \\
& \begin{tikzcd}[ampersand replacement=\&,sep=scriptsize]
 \ohom_{\Z[t^{\pm 1}]}(H_1(X_K; \Z[t^{\pm 1}]), \hom_{R}(R, \Xi^{-1}R/R))
\ar[r, "\cong"] \& \ohom_R(H_1(X_K; \Z[t^{\pm 1}])\otimes R, \Xi^{-1}R/R)
\ar[r, "\cong"] \& \mbox{}
\end{tikzcd} \\
& \begin{tikzcd}[sep=scriptsize]
  \ohom_R(H_1(X_K; R), \Xi^{-1}R/R).
\end{tikzcd}
\end{align*}
\end{lem}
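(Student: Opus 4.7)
The plan is to split the displayed four-row diagram into three stacked squares and verify commutativity of each via naturality of the ring map $\iota\colon \Z[t^{\pm 1}]\to R$. All four horizontal maps are functorial constructions induced by $\iota$, and the vertical maps are the isomorphisms supplied by \cref{lem:changecoeff}.

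The top square (rows~1 and~2) commutes by naturality of the long exact sequence of the pair $(X_K,\partial X_K)$: at the chain level, the inclusion $C_*(\widetilde X_K)\to C_*(\widetilde X_K,\widetilde{\partial X_K})$ induces compatible maps after tensoring with either $\Z[t^{\pm 1}]$ or $R$. The middle square (rows~2 and~3) commutes by naturality of twisted Poincaré–Lefschetz duality with respect to~$\iota$: cap product with a fundamental class $[X_K,\partial X_K]$ is defined at the cochain level independently of the coefficient ring, and therefore commutes with the base change along~$\iota$ (see e.g.~\cite[Theorem A.15]{FNOP19}).

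The bottom square is the main technical point. Using \cref{lem:changecoeff}(v) and~(vi), I would replace $\delta$ on both sides by the composition $\kappa\circ\beta^{-1}$ with $\Omega=Q(R)$ on the right and $\Omega=\Q(t)$ on the left, where $\beta$ is then an honest isomorphism. Applying $\iota$ to the short exact sequence $0\to\Z[t^{\pm 1}]\to\Q(t)\to\Q(t)/\Z[t^{\pm 1}]\to 0$ produces a morphism of short exact sequences to $0\to R\to Q(R)\to Q(R)/R\to 0$, which yields a commutative square of Bockstein connecting homomorphisms. Separately, the Kronecker evaluation $\kappa$ is natural in both the space and the coefficient module, producing another commutative square. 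These two naturality squares, combined with the tensor--hom adjunction $\ohom_{\Z[t^{\pm 1}]}(M,\hom_R(R,N))\cong \ohom_R(M\otimes R,N)$ and the isomorphism from \cref{lem:changecoeff}(i), produce precisely the bottom horizontal map of the diagram, and their composition gives the desired commutativity.

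The hardest part will be the bookkeeping in the bottom square: making sure that the chain of isomorphisms in the explicit definition of the bottom horizontal map (composition of $\iota'_*$, the identification $\Xi^{-1}R/R=\hom_R(R,\Xi^{-1}R/R)$, the adjunction, and \cref{lem:changecoeff}(i)) lines up correctly with the Bockstein and Kronecker-evaluation naturality squares. No new idea beyond naturality is needed; it suffices to track a single cohomology class $[f]\in H^1(X_K;\Q(t)/\Z[t^{\pm 1}])$ around both routes and verify that the resulting $R$-linear maps $H_1(X_K;R)\to Q(R)/R$ agree on elementary tensors $a\otimes r$, where both yield $r\cdot\iota'([f](a))$.
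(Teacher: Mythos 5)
Your overall strategy (split into three squares and invoke naturality with respect to $\iota$) is exactly what the paper does --- its proof is a one-line appeal to naturality, citing \cite{FNOP19} for naturality of Poincar\'e duality --- and your treatment of the top and middle squares is fine. However, your argument for the bottom square contains a step that fails as stated: you propose to apply $\iota$ to the short exact sequence $0 \to \Z[t^{\pm 1}] \to \Q(t) \to \Q(t)/\Z[t^{\pm 1}] \to 0$ to get a morphism of short exact sequences to $0 \to R \to Q(R) \to Q(R)/R \to 0$. For a general admissible $\Z[t^{\pm 1}]$-algebra $R$ there is no induced map $\Q(t) \to Q(R)$: a nonzero polynomial may become zero (or a zero divisor) in $R$, so its inverse has nowhere to go. This is not a corner case --- for $R = \Z[t^{\pm 1}]/(t+1)$, which is precisely the algebra used in \cref{cor:double}, the element $1/(t+1) \in \Q(t)$ has no image. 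For the same reason $\iota'$ is only defined on $\Xi^{-1}\Z[t^{\pm 1}]/\Z[t^{\pm 1}]$, not on $\Q(t)/\Z[t^{\pm 1}]$, so your final element-tracking formula $r\cdot\iota'([f](a))$ with $[f] \in H^1(X_K;\Q(t)/\Z[t^{\pm 1}])$ is not literally meaningful either. The whole point of introducing $\Xi$ and requiring $R$ to be $\Xi$-torsion free is that only the $\Xi$-localization, not the full quotient field, behaves well under the base change.

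The fix is to run your Bockstein/Kronecker naturality argument with the $\Xi$-localized coefficient sequences: $\iota$ does induce a morphism from $0 \to \Z[t^{\pm 1}] \to \Xi^{-1}\Z[t^{\pm 1}] \to \Xi^{-1}\Z[t^{\pm 1}]/\Z[t^{\pm 1}] \to 0$ to $0 \to R \to \Xi^{-1}R \to \Xi^{-1}R/R \to 0$ (or further into $0 \to R \to Q(R) \to Q(R)/R \to 0$), because elements of $\Xi$ map to non-zero-divisors of $R$ by admissibility. One more wrinkle then appears: with $\Omega = \Xi^{-1}R$ the Bockstein $\beta$ is only surjective, not bijective, so you cannot write $\delta = \kappa\circ\beta^{-1}$ on the nose; instead use the characterization of $\delta$ from \cref{lem:changecoeff}(iv) ($\delta\circ\beta = \kappa$ with $\beta$ surjective), or postcompose with the injection $\ohom_R(H_1(X_K;R),\Xi^{-1}R/R) \hookrightarrow \ohom_R(H_1(X_K;R),Q(R)/R)$ and use \cref{lem:changecoeff}(vi) on the right-hand column. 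With those substitutions your bookkeeping of the adjunction and of \cref{lem:changecoeff}(i) goes through and recovers the paper's (terse) naturality argument.
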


\begin{proof}
Commutativity of the diagram follows from the naturality of the maps (for the naturality of Poincaré duality, see e.g.~\cite[Section~A.3]{FNOP19}).
\end{proof}

\begin{proof}[Proof of \cref{prop:appmain}]
The first map is an isomorphism, as shown in \cref{lem:changecoeff}(iii). For the second map, see \cite[Section~A.3]{FNOP19}. The third and fourth map are discussed in \cref{lem:changecoeff}(iv) and (v).
That these maps are isomorphisms implies the nonsingularity of $\Bl^R_K$, whereas the sesquilinearity of $\Bl^R_K$ is automatic.
The formula relating $\Bl_K$ and $\Bl^R_K$ follows from \cref{lem:changecoeff}(vi) and \cref{lem:ZtoR}.
This formula also implies that $\Bl^R_K$ is Hermitian, since $\Bl_K$ is.
\end{proof}

For the proof of \cref{prop:appmain2}, we need several lemmas.
Much of the required homological algebra machinery was developed by Levine.
In particular, we will need the following statement.%

\begin{lem}[{\cite[Prop.~(3.5)]{Lev77}}]\label{lem:typeK}
If $M$ is a finitely generated $\Z$-torsion free $\Z[t^{\pm 1}]$-module on which multiplication with $1-t$ is an automorphism, then $M$ is of homological dimension $1$, i.e.\ there is a short exact sequence
\[
\begin{tikzcd}
0 \ar[r] & \Z[t^{\pm 1}]^n\ar[r] & \Z[t^{\pm 1}]^m \ar[r] & M \ar[r] & 0.
\end{tikzcd}
\]
for some $n,m \in \N$.\qed
\end{lem}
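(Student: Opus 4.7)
The plan is to establish $\mathrm{pd}_{\Z[t^{\pm 1}]}(M)\le 1$ by a local depth argument, and then promote the resulting projective syzygy to a free module via $K$-theoretic input.

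First, pick any surjection $\varphi\colon \Z[t^{\pm 1}]^m \twoheadrightarrow M$, which exists since $M$ is finitely generated, and set $K = \ker \varphi$. Because $\Z[t^{\pm 1}]$ is Noetherian, $K$ is finitely generated, and the task reduces to showing $K$ is free. I would first aim for the weaker statement that $K$ is projective, equivalently that $\mathrm{pd}_{\Z[t^{\pm 1}]}(M)\le 1$. The ring $\Z[t^{\pm 1}]$ is a regular commutative Noetherian ring of Krull dimension~$2$, so at each maximal ideal $\mathfrak{m}$ the localisation $\Z[t^{\pm 1}]_\mathfrak{m}$ is a regular local ring of depth~$2$, and the Auslander--Buchsbaum formula yields
\[
\mathrm{pd}_{\Z[t^{\pm 1}]_\mathfrak{m}}(M_\mathfrak{m}) \;+\; \mathrm{depth}_{\Z[t^{\pm 1}]_\mathfrak{m}}(M_\mathfrak{m}) \;=\; 2.
\]
It therefore suffices to exhibit, for every $\mathfrak{m}$ with $M_\mathfrak{m}\neq 0$, a non-zero-divisor on $M_\mathfrak{m}$ inside $\mathfrak{m}$.

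Every maximal ideal of $\Z[t^{\pm 1}]$ has finite residue field, and so contains some rational prime $p\in\Z$; the assumption that $M$ is $\Z$-torsion free then guarantees that multiplication by $p$ is injective on $M$, hence on $M_\mathfrak{m}$. So $\mathrm{depth}(M_\mathfrak{m})\ge 1$ at each maximal ideal and consequently $\mathrm{pd}_{\Z[t^{\pm 1}]_\mathfrak{m}}(M_\mathfrak{m})\le 1$ everywhere. Since $M$ is finitely generated over a Noetherian ring, the global projective dimension equals the supremum of local projective dimensions, so $\mathrm{pd}_{\Z[t^{\pm 1}]}(M)\le 1$ and $K$ is projective.

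To finish one upgrades $K$ from projective to free: the Bass--Heller--Swan formula gives $\widetilde K_0(\Z[t^{\pm 1}])\cong \widetilde K_0(\Z)\oplus K_{-1}(\Z)=0$, so stably free f.g.\ $\Z[t^{\pm 1}]$-modules are free; and $\mathrm{Pic}(\Z[t^{\pm 1}])=\mathrm{Pic}(\Z)=0$, so rank-$1$ projectives are free. Combining these, every finitely generated projective $\Z[t^{\pm 1}]$-module is free, so $K\cong\Z[t^{\pm 1}]^n$ for some $n$, producing the required short exact sequence. I expect the main obstacle to be this final upgrade, since it leans on nontrivial $K$-theoretic input; the depth calculation is a routine application of Auslander--Buchsbaum. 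I remark that the hypothesis that $1-t$ acts invertibly on $M$ does not really enter the argument — it only forces $M_\mathfrak{m}=0$ at those $\mathfrak{m}$ containing $1-t$, which would anyway be handled by the rational prime $p\in\mathfrak{m}$.
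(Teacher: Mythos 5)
The paper does not prove this lemma at all---it is quoted from Levine's \emph{Knot modules I}---so there is no internal argument to compare against; your route via Auslander--Buchsbaum is a legitimate independent one, and its first half is correct. Every maximal ideal $\mathfrak m$ of $\Z[t^{\pm 1}]$ does contain a rational prime $p$ (the ring is a finitely generated $\Z$-algebra, hence Jacobson, so residue fields at maximal ideals are finite), $\Z$-torsion freeness makes $p$ a non-zero-divisor on each nonzero $M_{\mathfrak m}$, so $\operatorname{depth} M_{\mathfrak m}\geq 1$, and Auslander--Buchsbaum over the $2$-dimensional regular local ring $\Z[t^{\pm 1}]_{\mathfrak m}$ gives $\operatorname{pd} M_{\mathfrak m}\leq 1$; the local-to-global passage for finitely generated modules over a Noetherian ring is standard. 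Your closing remark is also right: the hypothesis on $1-t$ is not needed, since $\Z$-torsion freeness alone excludes nonzero finite submodules, which is exactly what forces depth $\geq 1$ everywhere.

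The gap is in the final upgrade from projective to free. From $\widetilde K_0(\Z[t^{\pm 1}])=0$ you conclude ``so stably free f.g.\ modules are free''; that implication is backwards --- vanishing of $\widetilde K_0$ says every finitely generated projective is \emph{stably} free, not that stably free modules are free. Adding $\operatorname{Pic}(\Z[t^{\pm 1}])=0$ does not close the gap either: over a $2$-dimensional ring, Bass cancellation applies only in rank $\geq 3$ and $\operatorname{Pic}$ only settles rank $1$, so rank-$2$ stably free modules are precisely the unresolved case, and such modules can genuinely fail to be free over $2$-dimensional rings (the tangent module of the real $2$-sphere is the standard example). The assertion that all finitely generated projectives over $\Z[t^{\pm 1}]$ are free is true, but it is a Quillen--Suslin-type theorem (Swan's extension to Laurent polynomial rings over Dedekind domains), not a consequence of the $K_0$ and $\operatorname{Pic}$ computations. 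Fortunately you do not need it: the lemma only asks for \emph{some} length-one free resolution. Given $0\to P\to \Z[t^{\pm 1}]^m\to M\to 0$ with $P$ finitely generated projective and, by your (correct) computation of $\widetilde K_0$, an isomorphism $P\oplus \Z[t^{\pm 1}]^a\cong \Z[t^{\pm 1}]^b$, replace the surjection by $\Z[t^{\pm 1}]^{m+a}\to M$, $(f,x)\mapsto \varphi(f)$; its kernel is $P\oplus \Z[t^{\pm 1}]^a\cong \Z[t^{\pm 1}]^b$, which yields the desired short exact sequence of free modules. With that one-line repair the argument is complete.
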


\begin{lem}\label{lem:homztorsionfree}
For every $\Z[t^{\pm 1}]$-module $M$,
$\hom_{\Z[t^{\pm 1}]}(M, \Xi^{-1}\Z[t^{\pm 1}]/\Z[t^{\pm 1}])$ is $\Z$-torsion free.
\end{lem}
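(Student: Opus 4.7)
The plan is to first observe that a $\Z[t^{\pm 1}]$-module of the form $\hom_{\Z[t^{\pm 1}]}(M, N)$ is $\Z$-torsion free whenever $N$ is: if $n\phi = 0$ for some $n \in \Z \setminus \{0\}$, then $n\phi(m) = 0$ in $N$ for every $m \in M$, forcing $\phi(m) = 0$. Thus the lemma reduces to showing that $\Xi^{-1}\Z[t^{\pm 1}]/\Z[t^{\pm 1}]$ is $\Z$-torsion free.

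To this end, I would take $x \in \Xi^{-1}\Z[t^{\pm 1}]/\Z[t^{\pm 1}]$ with $nx = 0$ for some nonzero $n \in \Z$, and represent $x = [a/q]$ with $a \in \Z[t^{\pm 1}]$ and $q \in \Xi$. The hypothesis $nx = 0$ means $na/q \in \Z[t^{\pm 1}]$, i.e., there exists $c \in \Z[t^{\pm 1}]$ with $na = qc$ in $\Z[t^{\pm 1}]$. The goal then is to show $a/q \in \Z[t^{\pm 1}]$.

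The key observation is that every element of $\Xi$ is primitive up to a unit of $\Z[t^{\pm 1}]$. Indeed, after multiplying by a suitable power of $t$ (a unit in $\Z[t^{\pm 1}]$), we may regard $q$ as lying in $\Z[t]$ with $|q(1)| = 1$; since the content of $q$ divides $q(1) = \pm 1$, it must be $1$. Similarly clearing denominators of $t$ in $a$ and $c$, we reduce the equation $na = qc$ to an equation in $\Z[t]$ with $q$ primitive.

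The main (and only nontrivial) step is the following application of Gauss's lemma: if $q \in \Z[t]$ is primitive and $na = qc$ in $\Z[t]$ with $n \in \Z \setminus \{0\}$, then $q$ divides $a$ in $\Z[t]$. To see this, pass to $\Q[t]$, where $a = q \cdot (c/n)$, and write $c/n = (1/d) p_0$ with $d \in \Z_{>0}$ and $p_0 \in \Z[t]$ primitive. Then $da = q p_0$, and taking contents yields $d \cdot \mathrm{cont}(a) = \mathrm{cont}(q) \cdot \mathrm{cont}(p_0) = 1$, so $d = 1$ and $c/n \in \Z[t]$. Returning to $\Z[t^{\pm 1}]$ gives $a/q \in \Z[t^{\pm 1}]$, so $x = 0$, completing the proof.
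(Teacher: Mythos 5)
Your argument is correct in substance, and its overall strategy matches the paper's: both proofs reduce the lemma to showing that the coefficient module $\Xi^{-1}\Z[t^{\pm 1}]/\Z[t^{\pm 1}]$ itself has no $\Z$-torsion (the paper does this reduction implicitly, arguing elementwise with $\alpha(a)$, while you state it explicitly), and both then exploit the defining condition $|q(1)|=1$ for elements of $\Xi$. Where you differ is in how the divisibility step is run: the paper writes the fraction in lowest terms, uses coprime divisibility in the UFD $\Z[t^{\pm 1}]$ to conclude that the denominator divides the integer $m$, and finishes by noting that the only integers in $\Xi$ are $\pm 1$; you keep the fraction as given, observe instead that every $q\in\Xi$ is primitive up to a unit (its content divides $q(1)=\pm 1$), and invoke Gauss's lemma to pass from $q\mid na$ to $q\mid a$. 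The two mechanisms are essentially interchangeable elementary arguments, yours trading the lowest-terms reduction for content multiplicativity. One small slip in your write-up: a polynomial in $\Q[t]$ cannot in general be written as $(1/d)p_0$ with $p_0\in\Z[t]$ primitive (consider $2t$); you should write $c/n=(e/d)p_0$ with $\gcd(e,d)=1$, $d>0$ and $p_0$ primitive, after which the same content computation gives $d\cdot\mathrm{cont}(a)=|e|$, hence $d\mid e$, so $d=1$ and $c/n\in\Z[t]$, which is all you need to conclude $q\mid a$ and hence $x=0$.
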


\begin{proof}
Let a homomorphism $\alpha\colon M \to \Xi^{-1}\Z[t^{\pm 1}]/\Z[t^{\pm 1}]$ be given such that $m\alpha = 0$ for some nonzero $m\in\Z$. Let $a\in M$ be given. Write $\alpha(a) = n(t) / d(t) \in \Xi^{-1}\Z[t^{\pm 1}]/\Z[t^{\pm 1}]$, where $n(t)\in \Z[t^{\pm 1}]$ and $d(t)\in \Xi$ have no nonunital common divisors. Then $m\alpha(a) = mn(t)/d(t) = 0 \in \Xi^{-1}\Z[t^{\pm 1}]/\Z[t^{\pm 1}]$, which implies that $d(t)$ divides $mn(t)$ in $\Z[t^{\pm 1}]$. Because $\Z[t^{\pm 1}]$ is a UFD and $n(t)$ and $d(t)$ have no common divisor, it follows that $d(t)$ is an integer dividing $m$. But the only integers in $\Xi$ are~$\pm 1$.
It follows that $\alpha(a) = 0$. So we have established $\alpha = 0$.
\end{proof}

\begin{lem}\label{lem:alexztorsionfree}
The module $H_1(X_K;\Z[t^{\pm 1}])$ is $\Z$-torsion free for all knots $K$.
\end{lem}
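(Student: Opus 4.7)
The plan is to exploit the nonsingularity of the Blanchfield pairing together with \cref{lem:homztorsionfree}. The Alexander module $H_1(X_K;\Z[t^{\pm 1}])$ is $\Xi$-torsion, since it is annihilated by $\Delta_K \in \Xi$. Consequently, although a priori the adjoint of $\Bl_K$ has target $\ohom_{\Z[t^{\pm 1}]}(H_1(X_K;\Z[t^{\pm 1}]), \Q(t)/\Z[t^{\pm 1}])$, for $\Xi$-torsion modules all pairing values automatically lie in the $\Xi$-torsion submodule $\Xi^{-1}\Z[t^{\pm 1}]/\Z[t^{\pm 1}] \subset \Q(t)/\Z[t^{\pm 1}]$.

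Concretely, I will observe that $\Z[t^{\pm 1}]$ is itself an admissible $\Z[t^{\pm 1}]$-algebra (it has no $(t-1)$-torsion and no $\Xi$-torsion), so \cref{lem:changecoeff}(iv) with $R = \Z[t^{\pm 1}]$ and $\Omega = \Xi^{-1}\Z[t^{\pm 1}]$ provides an isomorphism $\delta \colon H^2(X_K;\Z[t^{\pm 1}]) \to \ohom_{\Z[t^{\pm 1}]}(H_1(X_K;\Z[t^{\pm 1}]), \Xi^{-1}\Z[t^{\pm 1}]/\Z[t^{\pm 1}])$. Composing $\delta$ with the natural map $H_1(X_K;\Z[t^{\pm 1}]) \to H_1(X_K,\partial X_K;\Z[t^{\pm 1}])$ (an isomorphism by \cref{lem:changecoeff}(iii)) and inverse Poincaré duality yields an isomorphism
\[
H_1(X_K;\Z[t^{\pm 1}]) \xrightarrow{\cong} \ohom_{\Z[t^{\pm 1}]}(H_1(X_K;\Z[t^{\pm 1}]), \Xi^{-1}\Z[t^{\pm 1}]/\Z[t^{\pm 1}]).
\]

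Finally, I will invoke \cref{lem:homztorsionfree} to deduce that the right-hand module is $\Z$-torsion free, and hence so is $H_1(X_K;\Z[t^{\pm 1}])$. There is no real obstacle here: the only point requiring slight care is checking that the target module of the Blanchfield adjoint can be taken to be $\ohom(-,\Xi^{-1}\Z[t^{\pm 1}]/\Z[t^{\pm 1}])$ rather than $\ohom(-,\Q(t)/\Z[t^{\pm 1}])$, which is precisely the content of the $\Omega = \Xi^{-1}\Z[t^{\pm 1}]$ case of \cref{lem:changecoeff}(iv) already established in the appendix.
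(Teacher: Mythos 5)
Your proposal is correct and follows essentially the same route as the paper's proof: identify $H_1(X_K;\Z[t^{\pm 1}])$ with $\ohom_{\Z[t^{\pm 1}]}(H_1(X_K;\Z[t^{\pm 1}]),\Xi^{-1}\Z[t^{\pm 1}]/\Z[t^{\pm 1}])$ via the Blanchfield adjoint together with \cref{lem:changecoeff} applied with $R=\Z[t^{\pm 1}]$, then conclude with \cref{lem:homztorsionfree}. The only cosmetic difference is that you assemble the isomorphism directly from \cref{lem:changecoeff}(iii), Poincar\'e duality and $\delta$ with $\Omega=\Xi^{-1}\Z[t^{\pm 1}]$, whereas the paper quotes nonsingularity of $\Bl_K$ over $\Q(t)/\Z[t^{\pm 1}]$ and then switches the target module by the same lemma.
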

\begin{proof}
The adjoint of the Blanchfield pairing is an isomorphism between the modules $H_1(X_K;\Z[t^{\pm 1}])$ and
$\ohom_{\Z[t^{\pm 1}]} (H_1(X_K; \Z[t^{\pm 1}]), \Q(t) / \Z[t^{\pm 1}])$.
The latter module is isomorphic to the module
$\ohom_{\Z[t^{\pm 1}]} (H_1(X_K; \Z[t^{\pm 1}]), \Xi^{-1} \Z[t^{\pm 1}] / \Z[t^{\pm 1}])$ by \cref{lem:changecoeff}.
That last module, in turn, is $\Z$\nobreakdash-torsion free by \cref{lem:homztorsionfree}.
\end{proof}

\begin{lem}\label{lem:tor}
Let $M$ be a finitely generated $\Z[t^{\pm 1}]$-module without $\Z$-torsion that is annihilated by some $q(t)\in \Xi$, and let $R$ be a $\Xi$-torsion free $\Z[t^{\pm 1}]$-algebra $R$ (possibly $\Z[t^{\pm 1}]$ itself).
\begin{enumerate}[label=\emph{(\roman*)}]
\item The $R$-module $M\otimes R$ has homological dimension 1.
\item For a $\Xi$-torsion free $R$-module $N$, the module $\Tor^R_1(M\otimes R,N)$ is trivial.
\item For a $\Xi$-divisible $R$-module $N$, the module $\Ext_R^1(M\otimes R,N)$ is trivial.
\end{enumerate}
\end{lem}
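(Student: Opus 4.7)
The plan is to reduce all three parts to a single technical observation: the matrix representing the first differential in a short free $\Z[t^{\pm 1}]$-resolution of $M$ is \emph{invertible up to $\Xi$}, meaning there exist a square matrix $B$ over $\Z[t^{\pm 1}]$ and an element $q\in\Xi$ with $BA = AB = qI$. First I would check that $1-t$ acts invertibly on $M$: since $qM = 0$ for some $q\in\Xi$, and $|q(1)|=1$, one may write $q(t) = q(1) + (1-t)\tilde q(t)$ with $\tilde q\in\Z[t^{\pm 1}]$, whence $(1-t)\tilde q(t) = \mp 1$ on $M$. Combined with the other hypotheses on $M$, \cref{lem:typeK} yields a short exact sequence
\[
0 \to \Z[t^{\pm 1}]^n \xrightarrow{A} \Z[t^{\pm 1}]^m \to M \to 0.
\]

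Localizing at $\Xi$ kills $M$, so $\Xi^{-1}A$ is an isomorphism of free $\Xi^{-1}\Z[t^{\pm 1}]$-modules; comparing ranks after further localizing to $\Q(t)$ forces $n = m$, and $A$ is invertible over $\Xi^{-1}\Z[t^{\pm 1}]$. Clearing denominators in $A^{-1}$ produces a square matrix $B$ over $\Z[t^{\pm 1}]$ and $q\in\Xi$ with $BA = AB = q\, I_n$. This is the crucial input from which all three parts will follow uniformly.

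For (i), tensoring the resolution with $R$ gives the complex $R^n \xrightarrow{A} R^n \to M\otimes R\to 0$, which is right exact; the injectivity of $A$ on the left follows from $Av = 0 \Rightarrow qv = BAv = 0 \Rightarrow v = 0$, where the last step uses that $R$ is $\Xi$-torsion free. This exhibits a length-one free resolution of $M\otimes R$. Part (ii) computes $\Tor^R_1(M\otimes R,N)$ as the kernel of $A\colon N^n\to N^n$, and the same $qv = BAv = 0$ argument gives vanishing, this time using that $N$ is $\Xi$-torsion free. Part (iii) computes $\Ext^1_R(M\otimes R,N)$ as the cokernel of $A^T\colon N^n\to N^n$; given $w\in N^n$, $\Xi$-divisibility of $N$ produces $w'\in N^n$ with $qw' = w$, and then $A^T(B^T w') = (BA)^T w' = qw' = w$, so $A^T$ is surjective.

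The only slightly subtle point is the passage from invertibility of $A$ over $\Xi^{-1}\Z[t^{\pm 1}]$ to invertibility \emph{up to $\Xi$} over $\Z[t^{\pm 1}]$, but this is automatic once the entries of $A^{-1}$ are exhibited as elements of $\Xi^{-1}\Z[t^{\pm 1}]$ and a common denominator is chosen. Beyond that, all three parts are mechanical consequences of the identity $BA = qI$ paired with the appropriate hypothesis on $R$ or $N$.
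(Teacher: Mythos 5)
Your proof is correct, but it takes a genuinely different route from the paper's. The paper first proves (i)--(iii) in the special case $R=\Z[t^{\pm 1}]$: (i) via \cref{lem:typeK} (after checking, as you do, that $1-t$ acts invertibly on $M$), and then (ii) and (iii) by hitting the multiplication-by-$q$ short exact sequences $0\to N\xrightarrow{q}N\to N/qN\to 0$ (resp.\ $0\to\{q\text{-torsion}\}\to N\xrightarrow{q}N\to 0$) with $\Tor$/$\Ext$, using that $\Tor_2$ vanishes by (i) and that $q$ annihilates $M$; it then bootstraps to general $R$ by tensoring the resolution with $R$, invoking the already-proved (ii) with $N=R$ to preserve exactness, and repeating the same derived-functor arguments over $R$. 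You instead extract from the same resolution the explicit identity $BA=AB=qI$ with $q\in\Xi$ (your localization argument for this is fine: $\Xi^{-1}M=0$, so $A$ is invertible over $\Xi^{-1}\Z[t^{\pm 1}]$, forcing $n=m$, and clearing denominators works since $\Xi$ is multiplicatively closed and $\Z[t^{\pm 1}]$ embeds in $\Xi^{-1}\Z[t^{\pm 1}]$; alternatively one can note $\det A\in\Xi$ and take $B=\operatorname{adj}A$), after which all three parts are direct matrix computations with the single hypothesis on $R$ or $N$ plugged in at the end. Your approach buys a uniform, elementary treatment that avoids both the case split $R=\Z[t^{\pm 1}]$ versus general $R$ and the long exact sequences in $\Tor$ and $\Ext$; the paper's approach is matrix-free and its ``multiplication by an annihilator in $\Xi$'' argument is the kind of structural step that recurs elsewhere in the appendix, but the two are logically on an equal footing and both rest on \cref{lem:typeK} plus the invertibility of $1-t$ on $M$.
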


\begin{proof}
Let us first prove (i)--(iii) in the special case that $R = \Z[t^{\pm 1}]$.
\smallskip

\textit{(i) for $R = \Z[t^{\pm 1}]$.}
Similarly as in \cref{lem:changecoeff}(iii), one shows that
multiplication by $1-t$ is an isomorphism of $M$.
So $M$ is of homological dimension 1 by \cref{lem:typeK}.
\smallskip

\textit{(ii) for $R = \Z[t^{\pm 1}]$.}
Consider the short exact sequence
\[
\begin{tikzcd}
0 \ar[r] & N \ar[r,"q(t)"]& N \ar[r] & N /q(t) N \ar[r] & 0,
\end{tikzcd}
\]
which induces the exact sequence
\[
\begin{tikzcd}
\Tor^{\Z[t^{\pm 1}]}_2(M, N/q(t) N))
\ar[r] &
\Tor^{\Z[t^{\pm 1}]}_1(M, N)
\ar[r,"q(t)"] &
\Tor^{\Z[t^{\pm 1}]}_1(M, N).
\end{tikzcd}
\]
By (i), the first Tor term in the above sequence is trivial.
Hence multiplication by $q(t)$ is injective on $\Tor^{\Z[t^{\pm 1}]}_1(M, N)$. But it is also trivial, since
$q(t)$ annihilates~$M$. This implies that $\Tor^{\Z[t^{\pm 1}]}_1(M, N)$ is trivial.
\smallskip

\textit{(iii) for $R = \Z[t^{\pm 1}]$.}
The proof is dual to that of (ii), starting with the short exact sequence
\[
\begin{tikzcd}
0 \ar[r] & \{q\text{-torsion of }N\} \ar[r] & N \ar[r,"q(t)"]& N \ar[r] & 0.
\end{tikzcd}
\]
\smallskip

\textit{(i) for general $R$.} We have already shown that $M$ is of homological dimension $1$ over $\Z[t^{\pm 1}]$.
Tensoring the short exact sequence as in \cref{lem:typeK} with $R$ gives a short exact sequence
\[
\begin{tikzcd}
0 \ar[r] & R^n\ar[r] & R^m \ar[r] & M\otimes R \ar[r] & 0,
\end{tikzcd}
\]
showing that $M\otimes R$ is of homological dimension $1$ over $R$,
since $\tor^{\Z[t^{\pm 1}]}(M, R)$ is trivial by (ii).
\smallskip

Now, the proofs of (ii) and (iii) for general $R$ are analogous to the proofs in the special case $R = \Z[t^{\pm 1}]$.
\end{proof}

\begin{proof}[Proof of \cref{prop:appmain2}]
(i)
Consider the following short exact sequence of $\Z[t^{\pm 1}]$-modules:
\[
\begin{tikzcd}
0 \ar[r] & G \ar[r] & G^{\bot} \ar[r] & H_1(X_K; \Z[t^{\pm 1}]) \ar[r] & 0.
\end{tikzcd}
\]
The module $H_1(X_K; \Z[t^{\pm 1}])$ satisfies the hypotheses of \cref{lem:tor}(ii), since it
is $\Z$-torsion free by \cref{lem:alexztorsionfree},
and is annihilated by $\Delta_K\in \Z[t^{\pm 1}]$. Thus tensoring with $R$, and using \cref{lem:changecoeff}~(i), yields a short exact sequence of $R$-modules:

\[
\begin{tikzcd}
0 \ar[r] & G\otimes R \ar[r] & G^{\bot}\otimes R \ar[r] & H_1(X_K; R) \ar[r] & 0.
\end{tikzcd}
\]
Here, $G\otimes R$ and $G^{\bot}\otimes R$ carry pairings that are restrictions of $\Bl^R_J$,
while $H_1(X_K; R)$ carries the pairing $\Bl^R_K$. By \cref{prop:appmain},
the homomorphisms in the sequence respect those pairings.
This implies that the $\Bl^R_K$ is isometric to the pairing induced by $\Bl^R_J$
on $(G^{\bot}\otimes R) / (G\otimes R)$.
\smallskip

(ii)
There is a short exact sequence of $\Z[t^{\pm 1}]$-modules
\begin{equation}\label{eq:ses}
\begin{tikzcd}[column sep=small]
0 \ar[r] & G^{\bot} \ar[r] & H_1(X_J; \Z[t^{\pm 1}]) \ar[r] \to \ohom_{\Z[t^{\pm 1}]}(G, \Xi^{-1} \Z[t^{\pm 1}] / \Z[t^{\pm 1}]) \ar[r] & 0,
\end{tikzcd}
\end{equation}
where the second map is the inclusion, and the third map is the composition of the adjoint of $\Bl_J$ and the map induced by the inclusion $G\to H_1(X_J;\Z[t^{\pm 1}])$.
To see exactness, observe that $G^{\bot}$ is by definition the kernel of the third map, and the third map is surjective since
\[
\ext^1_{\Z[t^{\pm 1}]}(H_1(X_J;\Z[t^{\pm 1}])/G, \Xi^{-1} \Z[t^{\pm 1}] / \Z[t^{\pm 1}])
\]
is trivial by \cref{lem:tor}(iii).
The $\ohom$-module in \cref{eq:ses} is $\Z$-torsion free by \cref{lem:homztorsionfree}, and
it is annihilated by $\Delta_J$. So the module satisfies the hypothesis of \cref{lem:tor}(ii).
Hence tensoring with $R$ preserves exactness of~\cref{eq:ses}.
Consider now the following diagram of $R$-modules, in which the top row is \cref{eq:ses} tensored by $R$.
\[
\begin{tikzcd}[column sep=small]
0 \ar[r] & G^{\bot} \otimes R \ar[r]\ar[d] & H_1(X_J; R)\ar[r]\ar[d] & \ohom_{\Z[t^{\pm 1}]}(G, \Xi^{-1} \Z[t^{\pm 1}] / \Z[t^{\pm 1}]) \otimes R \ar[r]\ar[d] & 0 \\
0 \ar[r] & (G \otimes R)^{\bot_R} \ar[r]   & H_1(X_J; R)\ar[r]       & \ohom_R(G\otimes R, \Xi^{-1} R / R) \ar[r] & 0,
\end{tikzcd}
\]
The bottom row is the analog of \cref{eq:ses} over the base ring $R$, and its exactness is shown in the same way as for \cref{eq:ses}.
The first vertical map is just the inclusion, and the second map is the identity. The third map is
induced by $\iota'\colon \Xi^{-1}\Z[t^{\pm 1}]/\Z[t^{\pm 1}]\to\Xi^{-1}R/R$, using the Tensor-Hom-adjunction isomorphism
\begin{align*}
  \hom_R(G\otimes R, \Xi^{-1} R / R) &\cong \hom_{\Z[t^{\pm 1}]}(G, \Xi^{-1} R / R)
  \\  f &\mapsto \big(g \mapsto f(g \otimes 1)\big).
\end{align*}
Commutativity of the first square of the diagram is clear, and commutativity of the second square
follows from \cref{lem:ZtoR}.
It follows that the first vertical map is injective, and the third vertical map is surjective.
Our goal is to show that the first map is bijective;
for that, by an easy diagram chase as in the proof of the Five Lemma, it suffices to show that the third map is injective.

By \cref{lem:typeK}, $G$ admits a resolution
\[
\begin{tikzcd}
0 \ar[r] & \Z[t^{\pm 1}]^n \ar[r] & \Z[t^{\pm 1}]^m \ar[r] & G \ar[r] & 0
\end{tikzcd}
\]
for some $n,m\in\N$.
Since $\ext^1_{\Z[t^{\pm 1}]}(G, \Xi^{-1} \Z[t^{\pm 1}] / \Z[t^{\pm 1}]) = 0$ by \cref{lem:tor}(iii),
we obtain another short exact sequence by applying $\hom_{\Z[t^{\pm 1}]}(-,\Xi^{-1}\Z[t^{\pm 1}] / \Z[t^{\pm 1}])$:
\begin{equation}\label{eq:ses2}
\begin{array}{l}
\begin{tikzcd}[column sep=small]
0 \ar[r] &
\hom_{\Z[t^{\pm 1}]}(G,\Xi^{-1}\Z[t^{\pm 1}] / \Z[t^{\pm 1}]) \ar[r] &
B_m \ar[r] & B_n \ar[r] & 0,
\end{tikzcd}
\end{array}
\end{equation}
where we abbreviate $B_i = \hom_{\Z[t^{\pm 1}]}(\Z[t^{\pm 1}]^{\oplus i},\Xi^{-1}\Z[t^{\pm 1}] / \Z[t^{\pm 1}])$.
Let us show that $\tor_1^{\Z[t^{\pm 1}]}(R, B_n)$ is trivial.
Tensoring the short exact sequence
\[
\begin{tikzcd}
0  \ar[r] & \Z[t^{\pm 1}]  \ar[r] & \Xi^{-1}\Z[t^{\pm 1}]  \ar[r] & \Xi^{-1}\Z[t^{\pm 1}]/\Z[t^{\pm 1}] \ar[r] & 0
\end{tikzcd}
\]
with $R$ yields the exact sequence
\[
\begin{tikzcd}
\tor^{\Z[t^{\pm 1}]}_1(R, \Xi^{-1}\Z[t^{\pm 1}])  \ar[r] &
\tor^{\Z[t^{\pm 1}]}_1(R, \Xi^{-1}\Z[t^{\pm 1}] / \Z[t^{\pm 1}])  \ar[r] &
R \ar[r] & \Xi^{-1} R.
\end{tikzcd}
\]
In that sequence, the first module is trivial since localizations are flat, and the last map is injective, since $R$ is $\Xi$-torsion free. It follows that the second module is trivial. Since $B_i \cong (\Xi^{-1}\Z[t^{\pm 1}] / \Z[t^{\pm 1}])^{\oplus i}$,
we have that $\tor_1^{\Z[t^{\pm 1}]}(R, B_n) \cong \tor_1^{\Z[t^{\pm 1}]}(R, \Xi^{-1}\Z[t^{\pm 1}] / \Z[t^{\pm 1}])^{\oplus i}$ is trivial, as desired.

Hence tensoring the short exact sequence \cref{eq:ses2} with $R$ preserves exactness.
Part of that sequence forms the first row in the following diagram of $R$-modules:
\[
\begin{tikzcd}[column sep=small]
0 \ar[r] &
\hom_{\Z[t^{\pm 1}]}(G,\Xi^{-1}\Z[t^{\pm 1}] / \Z[t^{\pm 1}])\otimes R \ar[r]\ar[d] &
B_m \otimes R \ar[d] \\
0 \ar[r] &
\hom_{\Z[t^{\pm 1}]}(G,\Xi^{-1}R / R) \ar[r] &
\hom_{\Z[t^{\pm 1}]}(\Z[t^{\pm 1}]^m,\Xi^{-1}R / R)
\end{tikzcd}
\]
Note the second row, obtained from the resolution by applying $\hom_{\Z[t^{\pm 1}]}(-,\Xi^{-1}R / R)$, is also exact, the diagram commutes, and the second vertical map is an isomorphism.
This implies that the first vertical map is injective, concluding the proof.%
\end{proof}
\end{appendix}


\end{document}